\newcommand{\sysn}{\left\{\begin{array}{rcl}}
\newcommand{\sysk}{\end{array}\right.}
\newtheorem{theorem}{Theorem}[section]
\newtheorem{lemma}[theorem]{Lemma}
\theoremstyle{example}
\newtheorem{example}[theorem]{Example}
\newtheorem{proposition}[theorem]{Proposition}
\theoremstyle{definition}
\newtheorem{definition}[theorem]{Definition}
\newtheorem{remark}[theorem]{Remark}
\newtheorem{corollary}[theorem]{Corollary}
\journal{...}
\begin{document}

\title{Baire property of space of Baire-one functions}

\author{Alexander V. Osipov}

\address{Krasovskii Institute of Mathematics and Mechanics, \\ Ural Federal
 University, Ural State University of Economics, Yekaterinburg, Russia}

\ead{OAB@list.ru}

\begin{abstract} A topological space $X$ is {\it Baire} if the Baire
Category Theorem holds for $X$, i.e., the intersection of any
sequence of open dense subsets of $X$ is dense in $X$. One of the
interesting problems for the space $B_1(X)$ of all Baire-one
real-valued functions is the Banakh-Gabriyelyan problem of
characterization of a topological space $X$ for which the function
space $B_1(X)$ is Baire.

In this paper, we solve this problem, namely, we have obtained a
characterization when a function space $B_1(X)$ is Baire for a
topological space $X$.  Also we proved that $B_1(X)$ is Baire for
any $\gamma$-space $X$ and we have obtained a characterization of
a topological space $X$ for which the function space $B_1(X)$ is a
Choquet space. This answers questions posed recently by T. Banakh
and S. Gabriyelyan.

We also conclude that, it is consistent there are no uncountable
separable metrizable space $X$ such that $B_1(X)$ is countable
dense homogeneous.

\end{abstract}

\begin{keyword} Baire-one function  \sep  $\gamma$-space  \sep  function space  \sep  pseudocomplete  \sep  Baire property  \sep  topological game  \sep  Choquet space  \sep  countable dense
homogeneity

\MSC[2010] 54C08 \sep 54H05 \sep 26A21

\end{keyword}

\maketitle 

\section{Introduction}

 A space is {\it meager} (or {\it of the first Baire category}) if it
can be written as a countable union of closed sets with empty
interior. A topological space $X$ is {\it Baire} if the Baire
Category Theorem holds for $X$, i.e., the intersection of any
sequence of open dense subsets of $X$ is dense in $X$. Clearly, if
$X$ is a Baire space, then $X$ is not meager. The reverse
implication is in general not true. However, it holds for every
homogeneous space $X$ (see Theorem 2.3 in \protect\cite{LM}).
 Being a Baire space is an important topological
property for a space and it is therefore natural to ask when
function spaces are Baire. The Baire property for continuous
mappings was first considered in \protect\cite{Vid}. Then a paper
\protect\cite{LM} appeared, where various aspects of this topic
were considered. In \protect\cite{LM}, necessary and, in some
cases, sufficient conditions on a space $X$ were obtained under
which the space $C_p(X)$ of all continuous real-valued functions
$C(X)$ on a space
 $X$ with the topology of pointwise convergence is Baire.

In general, it is not an easy task to characterize when a function
space has the Baire property. The problem for $C_p(X)$ was solved
independently by Pytkeev \protect\cite{pyt1}, Tkachuk
\protect\cite{TKA} and van Douwen \protect\cite{vD}.

\medskip

\begin{theorem}(Pytkeev-Tkachuk-van Douwen)  The space
$C_p(X)$ is Baire if and only if every pairwise disjoint
sequence of non-empty finite subsets of $X$ has a strongly
discrete subsequence.
\end{theorem}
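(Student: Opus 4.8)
The plan is to reduce the Baire property to non-meagerness, isolate a single interpolation lemma, and then treat the two implications separately, the harder one being sufficiency. First I would record two standing facts. Since $C_p(X)$ is a topological group under pointwise addition, it is homogeneous, so by Theorem 2.3 in \cite{LM} it is Baire if and only if it is non-meager; this lets me replace ``not Baire'' by ``meager'' in the sufficiency direction. Second, I would prove an \emph{interpolation lemma}: if $(F_k)$ is strongly discrete, witnessed by a discrete family of open sets $U_k\supseteq F_k$, then for every choice of values $\phi_k\colon F_k\to\mathbb{R}$ there is a continuous $g\colon X\to\mathbb{R}$ with $g|_{F_k}=\phi_k$ and $g\equiv 0$ off $\bigcup_k U_k$; this is built from Tychonoff bump functions inside each $U_k$, continuity of the (locally one-term) sum being exactly what discreteness of $\{U_k\}$ delivers. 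I would also note the converse half: if a continuous $f$ satisfies $f[F_k]\subseteq(k-\tfrac14,k+\tfrac14)$ for all $k$, then $(F_k)$ is strongly discrete, since the sets $f^{-1}((k-\tfrac14,k+\tfrac14))$ are preimages of a discrete family of intervals in $\mathbb{R}$.

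For necessity I would argue the contrapositive. Given a pairwise disjoint sequence $(F_n)$ of nonempty finite sets with no strongly discrete subsequence, set $O_n=\{f\in C_p(X):\exists\,k\ge n,\ f[F_k]\subseteq(k-\tfrac14,k+\tfrac14)\}$. Each $O_n$ is open, and it is dense: given a basic neighbourhood $[g;A,\varepsilon]$ with $A$ finite, all but finitely many $F_k$ miss $A$, so I pick $k\ge n$ with $F_k\cap A=\emptyset$, take a Tychonoff function $u$ with $u|_A\equiv0$ and $u|_{F_k}\equiv1$, and check that $(1-u)g+ku$ lies in $[g;A,\varepsilon]\cap O_n$. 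If some $f$ belonged to $\bigcap_n O_n$, then $f[F_k]\subseteq(k-\tfrac14,k+\tfrac14)$ for infinitely many $k$, and the converse half of the lemma would produce a strongly discrete subsequence, contradicting the choice of $(F_n)$. Hence $\bigcap_n O_n=\emptyset$, which exhibits $C_p(X)$ as a countable union of closed nowhere dense sets, so it is not Baire.

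For sufficiency I would again use the contrapositive together with the reduction: assuming $C_p(X)$ meager, write $C_p(X)=\bigcup_n N_n$ with $N_n$ closed, nowhere dense and increasing, and recursively choose basic open sets $W_n=[h_n;A_n,\varepsilon_n]\subseteq W_{n-1}\setminus N_n$ with $A_{n-1}\subsetneq A_n$ and $\varepsilon_n$ summably small, so that the new layers $F_n=A_n\setminus A_{n-1}$ are pairwise disjoint nonempty finite sets and the functions $h_n$ converge pointwise on $D=\bigcup_n A_n$ to a partial function $f$. The point of the construction is that any continuous extension of $f|_D$ would lie in every $W_n$, hence outside every $N_n$, which is impossible; so $f|_D$ admits no continuous extension. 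I would then feed $(F_n)$ into the hypothesis: a strongly discrete subsequence $(F_{n_j})$, via the interpolation lemma applied to the target values of $f$, should be converted into a genuine continuous function meeting the constraints of the $W_{n_j}$ and thus escaping all $N_{n_j}$, the desired contradiction, forcing $(F_n)$ to have no strongly discrete subsequence.

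The main obstacle is precisely this last step: reconciling the interpolated subsequence with the \emph{nested} constraints of the chain, since $W_{n_j}$ restricts $f$ on all of $A_{n_j}$ and not only on the layer $F_{n_j}$. Overcoming it requires arranging the recursion so that, along the chosen subsequence, the values on the older coordinates are already approximately frozen and can be matched simultaneously by the interpolating function; this bookkeeping, namely deciding how much of each layer to commit and with what precision so that discreteness of a subsequence suffices to realize the whole limit, is where the real work of the theorem lies.
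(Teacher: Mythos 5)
Your reduction via homogeneity, your interpolation lemma (and its converse half), and the whole necessity direction are correct: the sets $O_n=\{f:\exists k\ge n,\ f[F_k]\subseteq(k-\tfrac14,k+\tfrac14)\}$ are indeed open and dense, and an element of $\bigcap_n O_n$ would yield a strongly discrete subsequence via preimages of the discrete family of intervals, so that direction is complete. (The paper itself does not reprove this classical theorem; the closest internal comparison is its proof of the $B_1(X)$ analogue, Theorem~\ref{th22}, whose implications $(2)\Rightarrow(1)$ and $(1)\Rightarrow(3)$ play the roles of your two directions.)

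The sufficiency direction, however, has a genuine gap, and it is exactly the step you flag at the end. If you first build the nested chain $W_n=[h_n;A_n,\varepsilon_n]$ and only afterwards extract a strongly discrete subsequence $(F_{n_j})$ of the layers, you are stuck: membership in $W_{n_j}$ constrains the candidate function $g$ on \emph{all} of $A_{n_j}$, and since $A_{n_j}\uparrow D$, in the limit $g$ must approximate $f=\lim h_n$ on every layer of $D$ --- including the layers \emph{not} belonging to the chosen subsequence, where strong discreteness gives you no interpolation tool and your lemma forces $g$ to be $0$ off the chosen discrete open sets. ``Freezing the older coordinates'' cannot repair this, because the subsequence is not known at the time the chain (and hence the frozen values) is being built. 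The known proofs --- Pytkeev's, and the paper's proof of $(1)\Rightarrow(3)$ in Theorem~\ref{th22} (Claims 1--4) --- invert the order of quantifiers: at stage $i$ one fixes a finite $2^{-i}$-net of the compact cube of functions on $A_i$ bounded by $m_i$ and, for each net element, a basic open set whose closure misses the $i$-th nowhere dense piece (condition $(d')$ of Claim~1), so that \emph{every} admissible partial function is approximated by a member of the prefabricated finite family. Only then is the strongly discrete subsequence extracted, and the contradicting function is constructed afterwards, constant on the discrete neighborhoods of the subsequence layers and threaded through these families; it is not an extension of a limit function fixed in advance. That universal finite-net construction is the missing idea, and without it the sufficiency half is not proved.
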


A collection $\mathcal{G}$ of subsets of $X$ is {\it discrete} if
each point of $X$ has a neighborhood meeting at most one element
of $\mathcal{G}$, and is {\it strongly discrete} if for each $G\in
\mathcal{G}$ there is an open superset $U_G$ of $G$ such that
$\{U_G: G\in\mathcal{G}\}$ is discrete.

\medskip
The problem for $C_k(X)$ was solved for locally compact $X$ by
Gruenhage and Ma \protect\cite{GMB}.  In paper \protect\cite{BG},
Banakh and Gabriyelyan introduced a new class of almost
$K$-analytic spaces which properly contains
$\check{C}$ech-complete spaces and $K$-analytic spaces and showed
that for an almost $K$-analytic space $X$ the following assertions
are equivalent: (i) $B_1(X)$ is a Baire space, (ii) $B_1(X)$ is a
Choquet space, and (iii) every compact subset of $X$ is scattered.

\medskip
One of the interesting problems for the space of Baire functions
is the Banakh-Gabriyelyan problem (Problem 1.1 in
\protect\cite{BG}): {\it Let $\alpha$ be a countable
ordinal. Characterize topological spaces $X$ and $Y$ for which the
function space $B_{\alpha}(X,Y)$ is Baire.}

\medskip

Note that in (\cite{BG}, Corollary 4.2), it is proved that
$B_{\alpha}(X,\mathbb{R})$ is Baire for any space $X$ and every
countable ordinal $\alpha\geq 2$.

\medskip
In this paper, we solve the Banakh-Gabriyelyan problem (in case
$Y=\mathbb{R}$), namely, we have obtained a characterization when
a function space $B_1(X)$ is Baire for a topological space $X$.

\section{Main definitions and notation}

Throughout this paper,  all spaces are assumed to be Tychonoff.
This is due to the fact that for any topological space $X$ there
exists the Tychonoff space $\tau X$ and the onto map $\tau_X:
X\rightarrow \tau X$ such that for any map $f$ of $X$ to a
Tychonoff space $Y$ there exists a map $g: \tau X\rightarrow Y$
such that $f=g\circ \tau_X$. Evidently, for any map $f:
X\rightarrow Y$ the unique map $\tau f: \tau X\rightarrow \tau Y$
is defined such that $\tau f \circ \tau_X=\tau_Y\circ f$. The
correspondence $X\rightarrow \tau X$ for any space $X$ and
$f\rightarrow \tau f$ for any map $f$ is called the Tychonoff
functor \protect\cite{Is,Ok}.

 The set of positive integers is denoted by $\mathbb{N}$ and
$\omega=\mathbb{N}\cup \{0\}$. Let $\mathbb{R}$ be the real line,
we put $\mathbb{I}=[0,1]\subset \mathbb{R}$, and let $\mathbb{Q}$
be the rational numbers. Let $f:X\rightarrow \mathbb{R}$ be a
real-valued function, then $\parallel f \parallel= \sup \{|f(x)|:
x\in X\}$, $S(g,\epsilon)=\{f: \parallel g-f
\parallel<\epsilon\}$, $B(g,\epsilon)=\{f: \parallel g-f
\parallel\leq\epsilon\}$, where $g$ is a real valued function and
$\epsilon>0$. Let $V=\{f\in \mathbb{R}^X: f(x_i)\in V_i,
i=1,...,n\}$ where $x_i\in X$, $V_i\subseteq \mathbb{R}$ are
bounded intervals for $i=1,...,n$, then $supp V=\{x_1,...,x_n\}$ ,
$diam V=\max \{diam V_i : 1\leq i \leq n \}$. The symbol $\bf{0}$
stands for the constant function to $0$. A basic open neighborhood
of $\bf{0}$ is of the form $[F, (-\epsilon, \epsilon)]=\{f\in
\mathbb{R}^X: f(F)\subset (-\epsilon, \epsilon)\}$, where $F\in
[X]^{<\omega}$ and $\epsilon>0$.

A real-valued function $f$ on a space $X$ is a {\it Baire-one
function} (or a {\it function of the first Baire class}) if $f$ is
a pointwise limit of a sequence of continuous functions on $X$.
Let $B_1(X)$ denote the space of all Baire-one real-valued
functions on a space
 $X$ with the topology of pointwise convergence.
 It is well known, the space $B_1(X)$ forms a vector space which
contains the space of all continuous functions $C(X)$ and which is
closed with respect to the uniform convergence. Moreover, $f\cdot
g$ and $\max(f,g)$ are Baire-one functions whenever $f,g\in
B_1(X)$.

 We recall that a subset of $X$ that is the
 complete preimage of zero for a certain function from~$C(X)$ is called a zero-set.
A subset $O\subseteq X$  is called  a cozero-set (or functionally
open) of $X$ if $X\setminus O$ is a zero-set of $X$. It is easy to
check that zero sets are preserved by finite unions and countable
intersections. Hence cozero sets are preserved by finite
intersections and countable unions. Countable unions of zero sets
will be denoted by $Zer_{\sigma}$ (or $Zer_{\sigma}(X)$),
countable intersection of cozero sets by $Coz_{\delta}$ (or
$Coz_{\delta}(X)$). It is easy to check that $Zer_{\sigma}$-sets
are preserved by countable unions and finite intersections. Note
that if $Y\subset X$ and $A\in Coz_{\delta}(X)$ then $A\cap Y\in
Coz_{\delta}(Y)$. Note also that any zero set is $Coz_{\delta}$
and any cozero-set is $Zer_{\sigma}$. It is well known that $f$ is
of the first Baire class if and only if $f^{-1}(U)\in
Zer_{\sigma}$ for every open $U\subseteq \mathbb{R}$ (see Exercise
3.A.1 in \protect\cite{lmz1}).

If $f$ is a real-valued function on a set $X$ and
$a\in\mathbb{R}$, we write $[f\geq a]$ for the set $\{x\in X:
f(x)\geq a\}$. Similarly we use $[f\leq a]$, $[f<a]$, $[f>a]$ and
$[f=a]$.

Below, in the main results, we need to apply the following two
propositions.

\medskip

\begin{proposition}(Proposition 3 in \protect\cite{KS})\label{pr1} If $A$ is a $Coz_{\delta}$-subset of a space
$X$, then there exists a Baire-one function $f$ with values in
$[0,1]$ such that $A=[f=0]$.

If $A$ and $B$ is a pair of disjoint $Coz_{\delta}$-subsets of
$X$, then there exists a Baire-one function $f$ on $X$ with values
in $[0,1]$ such that $A=[f=0]$ and $B=[f=1]$.

\end{proposition}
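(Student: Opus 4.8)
The plan is to build both functions out of the characteristic functions of the cozero (equivalently, zero) sets defining $A$ and $B$, using nothing more than the facts already recorded in the excerpt: that $B_1(X)$ is a vector space, is closed under products and under uniform convergence. The basic building block I would establish first is that the characteristic function of a zero-set is Baire-one. Indeed, if $Z$ is a zero-set, then $Z=[h=0]$ for some continuous $h\colon X\to[0,1]$ with $h\ge 0$ (take $h=\min(1,|\varphi|)$ for a witnessing $\varphi\in C(X)$). The continuous functions $1-\min(1,kh)$ then converge pointwise, as $k\to\infty$, to $\chi_Z$, since the value is $1$ on $Z$ (where $h=0$) and tends to $0$ off $Z$ (where $h>0$). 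Hence $\chi_Z\in B_1(X)$, and likewise $\chi_O=1-\chi_{X\setminus O}\in B_1(X)$ for every cozero-set $O$.

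For the first statement I would write $A=\bigcap_{n\in\mathbb{N}}O_n$ with each $O_n$ cozero, set $Z_n=X\setminus O_n$ (a zero-set), and define $f=\sum_{n}2^{-n}\chi_{Z_n}$. Each partial sum is a finite linear combination of Baire-one functions, hence Baire-one because $B_1(X)$ is a vector space; and the series converges uniformly, the tail beyond $N$ being at most $2^{-N}$, so $f\in B_1(X)$ by closedness under uniform convergence. The values lie in $[0,1]$, and $f(x)=0$ precisely when $\chi_{Z_n}(x)=0$ for every $n$, that is, when $x\in O_n$ for every $n$, i.e. $x\in A$. Thus $A=[f=0]$, as required.

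For the second statement I would apply the first part to $A$ and to $B$ separately, obtaining Baire-one functions $u,v\colon X\to[0,1]$ with $A=[u=0]$ and $B=[v=0]$. Since $A\cap B=\varnothing$, no point satisfies $u(x)=v(x)=0$, so $u+v>0$ everywhere, and I set $f=\frac{u}{u+v}$. Then $f$ takes values in $[0,1]$, while $f(x)=0$ iff $u(x)=0$ iff $x\in A$, and $f(x)=1$ iff $v(x)=0$ iff $x\in B$; hence $A=[f=0]$ and $B=[f=1]$.

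The only genuinely delicate point is checking that this quotient is again Baire-one, since division is discontinuous across $0$ and $u+v$ need not be bounded away from $0$. I would handle this by showing that the reciprocal of a strictly positive Baire-one function is Baire-one: writing $u+v=\lim_k w_k$ pointwise with $w_k$ continuous, the continuous functions $1/\max(w_k,1/k)$ converge pointwise to $1/(u+v)$, because at each $x$ we have $w_k(x)\to(u+v)(x)>0$, so eventually $\max(w_k(x),1/k)=w_k(x)$. Thus $1/(u+v)\in B_1(X)$, and finally $f=u\cdot\frac{1}{u+v}\in B_1(X)$ since products of Baire-one functions are Baire-one. This reciprocal step is where I expect the main work to lie; everything else is bookkeeping.
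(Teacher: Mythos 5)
Your argument is correct: the characteristic function of a zero-set is Baire-one by the $1-\min(1,kh)$ trick, the uniformly convergent series $\sum_n 2^{-n}\chi_{Z_n}$ stays in $B_1(X)$, and the reciprocal step via $1/\max(w_k,1/k)$ validly shows $1/(u+v)\in B_1(X)$, so $f=u/(u+v)$ works. Note that the paper itself gives no proof of this proposition --- it is imported verbatim as Proposition~3 of the cited Kalenda--Spurn\'{y} paper --- and your reconstruction is essentially the standard argument given there (sum of weighted characteristic functions for the first claim, the quotient $u/(u+v)$ for the separation claim), so there is nothing to fix.
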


\medskip

\begin{proposition}(Proposition 8 in \protect\cite{KS})\label{pr2} Let $X$ be a
topological space and $Y\subset X$. Then the following assertions
are equivalent:

(i) For any Baire-one function $f$ on $Y$ there is a Baire-one
function $g$ on $X$ extending $f$ such that $\inf f(Y)=\inf g(X)$
and $\sup f(Y)=\sup g(X)$.

(ii) Any Baire-one function on $Y$ can be extended to a Baire-one
function on $X$.

(iii) For any $Coz_{\delta}$-subset $A$ of $Y$ there is a
$Coz_{\delta}$-subset $\widehat{A}$ of $X$ with $A=\widehat{A}\cap
Y$, and, moreover, for any $Coz_{\delta}$-subset $G$ of $X$
disjoint with $Y$ there is a $Coz_{\delta}$-set $H\subset X$
satisfying $Y\subset H\subset X\setminus G$.
\end{proposition}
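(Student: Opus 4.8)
The plan is to establish the cycle $(i)\Rightarrow(ii)\Rightarrow(iii)\Rightarrow(i)$. The implication $(i)\Rightarrow(ii)$ is immediate: $(i)$ is exactly $(ii)$ with the extra demand of preserving infimum and supremum, so forgetting that demand gives $(ii)$ verbatim. For $(ii)\Rightarrow(iii)$ I would treat the two clauses separately. Given a $Coz_{\delta}$-subset $A$ of $Y$, use Proposition~\ref{pr1} to pick a Baire-one $f\colon Y\to[0,1]$ with $A=[f=0]$, extend it to a Baire-one $g$ on $X$ by $(ii)$, and set $\widehat{A}:=[g=0]$; since $[g=0]=[g\ge 0]\cap[g\le 0]$ is a finite intersection of $Coz_{\delta}$-sets it is $Coz_{\delta}$, and $\widehat{A}\cap Y=[f=0]=A$. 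For the second clause, given a $Coz_{\delta}$-set $G\subseteq X$ disjoint from $Y$, choose by Proposition~\ref{pr1} a Baire-one $h\colon X\to[0,1]$ with $G=[h=0]$. Then $h>0$ on $Y$, so $1/h|_{Y}$ is Baire-one (its preimages of open sets are preimages under $h|_{Y}$ of open subsets of $\mathbb{R}$, hence $Zer_{\sigma}$); extending it to a Baire-one $F$ on $X$ and putting $\Psi:=F\cdot h$, I get a Baire-one $\Psi$ with $\Psi\equiv 1$ on $Y$ and $\Psi\equiv 0$ on $G$, whence $H:=[\Psi\ge 1/2]$ is a $Coz_{\delta}$-set with $Y\subseteq H\subseteq X\setminus G$.

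The heart of the argument is $(iii)\Rightarrow(i)$, and the first thing I would extract is a \emph{separation lemma}: any two disjoint $Coz_{\delta}$-subsets $A,B$ of $Y$ extend to disjoint $Coz_{\delta}$-subsets of $X$. Indeed, extend $A,B$ to $Coz_{\delta}$-sets $A_{1},B_{1}\subseteq X$ by the first clause of $(iii)$; the $Coz_{\delta}$-set $A_{1}\cap B_{1}$ meets $Y$ in $A\cap B=\emptyset$, so by the second clause there is a $Coz_{\delta}$-set $H\supseteq Y$ disjoint from $A_{1}\cap B_{1}$, and then $A_{1}\cap H$ and $B_{1}\cap H$ are disjoint $Coz_{\delta}$-sets meeting $Y$ in $A$ and $B$. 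This step is precisely where \emph{both} clauses of $(iii)$ are needed, and I expect it to be the main obstacle of the whole proof.

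With the separation lemma I would run a Tietze-style iteration. Assume first $f\colon Y\to[-1,1]$ with $\inf f(Y)=-1$ and $\sup f(Y)=1$. The disjoint $Coz_{\delta}$-sets $[f\le -1/3]$ and $[f\ge 1/3]$ extend to disjoint $Coz_{\delta}$-subsets of $X$, which by the two-set form of Proposition~\ref{pr1} are the fibres $[h_{1}=0]$ and $[h_{1}=1]$ of a Baire-one function; rescaling $h_{1}$ into $[-1/3,1/3]$ produces a Baire-one $g_{1}$ on $X$ with $\sup_{y\in Y}|f(y)-g_{1}(y)|\le 2/3$. Iterating the same step on $f-g_{1}$, and so on, yields Baire-one $g_{n}$ with $\|g_{n}\|\le \tfrac13(2/3)^{n-1}$ and uniform residual $(2/3)^{n}$ on $Y$. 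Since $B_{1}(X)$ is closed under uniform convergence, $g:=\sum_{n}g_{n}$ is Baire-one, extends $f$, and satisfies $\|g\|\le 1$, so its range lies in $[-1,1]=[\inf f(Y),\sup f(Y)]$, giving $(i)$ in the normalized case. A bounded $f$ is reduced to this by an affine change of coordinates carrying $[\inf f(Y),\sup f(Y)]$ onto $[-1,1]$, after which the range bound transfers back to the equalities $\inf g(X)=\inf f(Y)$ and $\sup g(X)=\sup f(Y)$.

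Finally, for unbounded $f$ I would compose with an increasing homeomorphism $\varphi\colon\mathbb{R}\to(-1,1)$, extend the bounded Baire-one function $\varphi\circ f$ by the previous step, clip with constants to force the range into $[\inf(\varphi\circ f)(Y),\sup(\varphi\circ f)(Y)]$, and then correct the extension off any endpoint $\pm1$ that occurs at an infinite end of $f$: the relevant fibre is a $Coz_{\delta}$-set disjoint from $Y$, so the second clause of $(iii)$ together with Proposition~\ref{pr1} supplies a Baire-one multiplier $\psi\colon X\to[0,1]$ that vanishes there and equals $1$ on a $Coz_{\delta}$-neighbourhood of $Y$, after which $g:=\varphi^{-1}\circ(\psi\cdot g_{0})$ is the desired extension. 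This last reduction mirrors the classical Tietze extension of unbounded functions; the only fussiness is the sign/endpoint bookkeeping needed to keep $g$ real-valued while matching the (possibly infinite) infimum and supremum, and I regard it as routine once the separation lemma and the geometric norm control of the iterates are in hand.
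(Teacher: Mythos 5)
The paper does not actually prove Proposition~\ref{pr2}; it imports it verbatim as Proposition~8 of \protect\cite{KS}, so the only fair comparison is with the source. Your cycle $(i)\Rightarrow(ii)\Rightarrow(iii)\Rightarrow(i)$ is sound, and your separation lemma (disjoint $Coz_{\delta}$-subsets of $Y$ extend to disjoint $Coz_{\delta}$-subsets of $X$, using both clauses of $(iii)$) is exactly the crux; it plays the role that the verification of the ``in-between'' condition plays in \protect\cite{KS}. Where you genuinely diverge is in $(iii)\Rightarrow(i)$: Kalenda--Spurn\'y (and, in the same spirit, the paper's own Lemma~\ref{lem32}) invoke the insertion theorem for Baire-one functions (Theorem~3.2 of \protect\cite{lmz1}) applied to the envelopes $t$ and $s$ equal to $f$ on $Y$ and to $\inf f(Y)$, $\sup f(Y)$ off $Y$, whereas you run a direct Tietze-style $2/3$-iteration, using the two-set form of Proposition~\ref{pr1} at each stage and closure of $B_1$ under uniform convergence. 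Your route is more elementary and self-contained (it does not need the insertion theorem), at the cost of the usual convergence bookkeeping; both hinge on the same separation step. Your $(ii)\Rightarrow(iii)$ argument via the multiplier $\Psi=F\cdot h$ with $F$ extending $1/h|_Y$ is correct and clean.

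One concrete detail in the unbounded case needs to be pinned down, because the formula $g:=\varphi^{-1}\circ(\psi\cdot g_0)$ with an \emph{arbitrary} increasing homeomorphism $\varphi:\mathbb{R}\to(-1,1)$ does not preserve the supremum and infimum: if, say, $f$ is unbounded below with $\sup f(Y)=-5$, then $\varphi\circ f$ has negative supremum, yet $\psi\cdot g_0$ equals $0$ on the fibre $[\,|g_0|=1\,]$, so $\sup g(X)=\varphi^{-1}(0)>-5=\sup f(Y)$. The fix is precisely the one used in Step~2 of Lemma~\ref{lem32}: choose $\varphi$ so that $\inf(\varphi\circ f)(Y)<0<\sup(\varphi\circ f)(Y)$ (possible unless $f$ is constant, which is trivial); then multiplication by $\psi\in[0,1]$ moves values toward $0$ and hence keeps them inside $[\inf(\varphi\circ f)(Y),\sup(\varphi\circ f)(Y)]$, while still killing the fibre at $\pm1$. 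You flagged this as ``sign/endpoint bookkeeping,'' and with that choice of $\varphi$ it is indeed routine, but as literally written the step fails, so it should be stated.
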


\medskip
Both Baire and meager space have game characterizations due to
Oxtoby \protect\cite{Ox}.

The game $G_I(X)$ is started by the player ONE who selects a
nonempty open set $V_0\subseteq X$. Then the player TWO responds
selecting a nonempty open set $V_1\subseteq V_0$. At the $n$-th
inning the player ONE selects a nonempty open set $V_{2n}\subseteq
V_{2n-1}$ and the player TWO responds selecting a nonempty open
set $V_{2n+1}\subseteq V_{2n}$. At the end of the game, the player
ONE is declared the winner if $\bigcap\limits_{n\in \omega} V_n$
is empty. In the opposite case the player TWO wins the game
$G_I(X)$.

The game $G_{II}(X)$ differ from the game $G_I(X)$ by the order of
the players. The game $G_{II}(X)$ is started by the player TWO who
selects a nonempty open set $V_0\subseteq X$. Then player ONE
responds selecting a nonempty open set $V_1\subseteq V_0$. At the
$n$-th inning the player TWO selects a nonempty open set
$V_{2n}\subseteq V_{2n-1}$ and the player ONE responds selecting a
nonempty open set $V_{2n+1}\subseteq V_{2n}$. At the end of the
game, the player ONE is declared the winner if
$\bigcap\limits_{n\in\omega} V_n$ is empty. In the opposite case
the player TWO wins the game $G_{II}(X)$.

The following classical characterizations can be found in
\protect\cite{Ox}.

\medskip

\begin{theorem}(Oxtoby)\label{thox} A topological space $X$ is

(1) meager if and only if the player ONE has a winning strategy in
the game $G_{II}(X)$;

(2) Baire if and only if the player ONE has no winning strategy in
the game $G_{I}(X)$.

\end{theorem}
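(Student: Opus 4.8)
The plan is to prove both equivalences by the same two-step scheme, treating each as a pair of implications. Throughout, a play of either game produces a decreasing sequence $V_0\supseteq V_1\supseteq V_2\supseteq\cdots$ of nonempty open sets, and ONE wins exactly when $\bigcap_n V_n=\emptyset$. The two ``easy'' implications (from a meager, resp. non-Baire, witness to a winning strategy for ONE) are direct; the two ``hard'' implications (from a winning strategy for ONE to meagerness, resp. to non-Baireness) share a common tree-construction argument that I regard as the crux. This is the classical line of Oxtoby.

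For the easy direction of (2), suppose $X$ is not Baire, so there are open dense sets $D_n$, $n\in\omega$, and a nonempty open $W$ with $W\cap\bigcap_n D_n=\emptyset$. In $G_I(X)$ let ONE open with $V_0=W\cap D_0$ and, after each TWO move $V_{2n-1}$, reply with $V_{2n}=V_{2n-1}\cap D_n$; these moves are legal because each $D_n$ is dense open, and every resulting play satisfies $\bigcap_k V_k\subseteq W\cap\bigcap_n D_n=\emptyset$, so ONE wins. The easy direction of (1) is the mirror image: if $X=\bigcup_n F_n$ with each $F_n$ closed with empty interior, set $U_n=X\setminus F_n$, which is open dense with $\bigcap_n U_n=\emptyset$; in $G_{II}(X)$ let ONE answer each TWO move $V_{2n}$ by $V_{2n+1}=V_{2n}\cap U_n$, again legal and again forcing $\bigcap_k V_k\subseteq\bigcap_n U_n=\emptyset$.

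For the hard directions I fix a winning strategy $\sigma$ for ONE and build, level by level, countably many dense open sets with empty intersection. The engine is a maximality argument. Given an ONE-position, that is, a $\sigma$-consistent partial play ending in an open set $P$ that ONE has just produced, I use Zorn's lemma to choose a maximal pairwise-disjoint family of sets of the form $\sigma(\dots,V)\subseteq V\subseteq P$ obtained by letting TWO play various nonempty open $V\subseteq P$. Maximality forces the union of this family to be dense in $P$: any nonempty open $U\subseteq P$ missing all members could be played by TWO, and $\sigma$'s response would land inside $U$ and enlarge the family, a contradiction. Iterating this over all positions organizes the $\sigma$-consistent plays into a tree whose level-$n$ ONE-outputs form a pairwise-disjoint family with dense open union $G_n$, and successive levels refine one another. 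In case (2) the tree is rooted at the single set $\sigma(\emptyset)=V_0$ and one works inside $V_0$; in case (1), where TWO moves first, one roots the tree at a maximal pairwise-disjoint family of first replies $\sigma(V_0)$ ranging over all nonempty open $V_0\subseteq X$, so that $G_0$ is already dense in all of $X$.

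It then remains to check $\bigcap_n G_n=\emptyset$. If some point $x$ lay in every $G_n$, the pairwise-disjointness would pin down, for each $n$, the unique level-$n$ set containing $x$, and the refinement property would make these sets nested; this nested chain is precisely an infinite branch of the tree, hence a full play consistent with $\sigma$. Since $\sigma$ is winning, that play has empty intersection, yet $x$ lies in all its ONE-outputs, a contradiction. Thus in case (1) $X=\bigcup_n(X\setminus G_n)$ is a countable union of closed nowhere dense sets, so $X$ is meager; in case (2) the dense open sets $G_n$ show that $V_0$ is not Baire, and since a nonempty open subspace of a Baire space is Baire, $X$ is not Baire either. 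The main obstacle is exactly this tree construction: arranging the possibly uncountable branching at each level into a single dense open set $G_n$, verifying via maximality that the unions are dense while disjointness makes a point of $\bigcap_n G_n$ determine a well-defined branch. Once this bookkeeping is in place, all four implications close and both equivalences follow.
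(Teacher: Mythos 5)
The paper does not prove this theorem; it is quoted as a classical result of Oxtoby and used as a black box (via the citation to \cite{Ox}), so there is no in-paper argument to compare against. Your proof is the standard Banach--Mazur/Oxtoby argument and it is correct: the two easy implications are handled properly (with the right indexing so that every $D_n$, resp.\ $U_n$, gets absorbed into some move), and the hard implications rest on the usual maximal-antichain tree construction, where maximality gives density of each level's union $G_n$, pairwise disjointness of each level makes a point of $\bigcap_n G_n$ select a unique $\sigma$-consistent branch, and the winning property of $\sigma$ then yields the contradiction. The only points worth stating explicitly in a written-out version are the two small facts you invoke in passing: that the level-$(n{+}1)$ sets coming from \emph{different} level-$n$ nodes are automatically disjoint (because they sit inside disjoint parents), and that a nonempty open subspace of a Baire space is Baire, which you need to pass from ``$V_0$ is not Baire'' to ``$X$ is not Baire'' in part (2). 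Both are routine, so the argument as sketched is sound.
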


\medskip

A topological space $X$ is defined to be {\it Choquet} if the
player TWO has a winning strategy in the game $G_{I}(X)$. Choquet
spaces were introduced in 1975 by White \cite{Whi} who called them
{\it weakly $\alpha$-favorable spaces}.

\medskip

We note the following results obtained in \protect\cite{LM}.

(1). If $Y$ is a homogeneous non-meager space then $Y$ is Baire.

Thus, if $B_1(X)$ (or $C_p(X)$) is a non-meager space then it is
Baire.

(2). If $X$ is a Tychonoff space, $C_p(X)$ is Baire and
$Y\subseteq X$, then $C_p(Y)$ is Baire.

Note also that in \protect\cite{Tk}, a game criterion is given for
the fact that a set $Z\subseteq Y^X$ is second category, where $Y$
is a complete metric space.

 In \protect\cite{LM}, there is a game for a topological space $X$, denoted by $\Gamma(X)$.
The game $\Gamma(X)$ is a game in which two players, called ONE
and TWO, are given an arbitrary finite starting set $S_0$ and then
proceed to choose alternate terms in a sequence $S_0$, $S_1$,
$S_2$,... of pairwise disjoint finite (possibly empty) subsets of
$X$. The resulting sequence $(S_0, S_1, S_2,...)$ is called a {\it
play} of the game $\Gamma(X)$ and is said to result in a {\it win
for player ONE} if and only if the set $S_1\cup S_3\cup S_5\cup
...$ is {\it not} a closed discrete subspace of $X$. If player ONE
does not win, then player TWO wins.

A {\it strategy} for player ONE in the game $\Gamma(X)$ is a
function $\sigma$ which assigns to each pairwise disjoint sequence
$(S_0,S_1,...,S_{2k})$, with $k\geq 0$, a finite set $S_{2k+1}$
which is disjoint from $S_0\cup S_1\cup...\cup S_{2k}$.

A strategy $\sigma$ for player ONE in the $\Gamma(X)$ is said to
be a {\it winning strategy} if, whenever $(S_0,S_1,...)$ is a play
of the game $\Gamma(X)$ in which
$S_{2k+1}=\sigma(S_0,S_1,...,S_{2k})$ for each $k\geq 0$, then
player ONE wins that play.

In (\protect\cite{LM}, Theorem 4.6), it is proved that if $C_p(X)$
is Baire then ONE no has a winning strategy in the game
$\Gamma(X)$. The converse, in general, is not the case.

In \protect\cite{pyt1}, Pytkeev proposed a modification of the
game $\Gamma(X)$ - the game $\Gamma'(X)$, in which player ONE wins
if  the set $S_1\cup S_3\cup S_5\cup ...$ is not strongly
discrete.

The following characterizations can be found in
\protect\cite{pyt1}.

\medskip

\begin{theorem}(Pytkeev). For a Tychonoff space $X$ the following assertions are
equivalent:

1. $C_p(X)$ is meager;

2. there is a pairwise disjoint sequence of non-empty finite subsets of $X$ no subsequence of which is strongly discrete;

3. ONE has a winning strategy in the game $\Gamma'(X)$;

4. there is a pairwise disjoint sequence $\{\Delta_n : n\in
\mathbb{N}\}$ of finite subsets of $X$  such that
$\sup\limits_{n\in \mathbb{N}}\Bigl(\min \{|f(x)|: x\in
\Delta_n\}\Bigr)<\infty$ for each $f\in C(X)$.
\end{theorem}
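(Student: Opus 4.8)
The plan is to prove the cycle $(1)\Rightarrow(3)\Rightarrow(2)\Rightarrow(1)$, leaning on the Pytkeev--Tkachuk--van Douwen theorem and on Oxtoby's characterization (Theorem~\ref{thox}); homogeneity of $C_p(X)$ is available but will not be needed. Everything reduces to one lemma, proved from the Tychonoff property, tying strong discreteness of a pairwise disjoint sequence $\{\Delta_n\}$ of finite sets to continuous functions. \emph{Half A}: if some $f\in C(X)$ maps each $\Delta_n$ into an interval $I_n$, the $I_n$ being pairwise disjoint with a uniform positive lower bound on their gaps, then slightly enlarged preimages $f^{-1}(I_n')$ form a disjoint discrete family of open supersets, so $\{\Delta_n\}$ is strongly discrete. \emph{Half B}: if $\{\Delta_n\}$ is strongly discrete, with witnessing discrete open family $\{U_n\}$, then a locally finite sum $\sum_n c_n\varphi_n$ of functions $\varphi_n\in C(X)$ with $\varphi_n=1$ on $\Delta_n$ and $\mathrm{supp}\,\varphi_n\subset U_n$ yields a continuous $f$ equal to $c_n$ on each $\Delta_n$. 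I also use that for a family of finite sets, the family is strongly discrete iff its union is.

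For $(1)\Rightarrow(3)$, note that a Baire space is never meager, so (1) forces $C_p(X)$ to fail to be Baire; the Pytkeev--Tkachuk--van Douwen theorem then supplies a pairwise disjoint sequence $\{\Delta_n\}$ of non-empty finite sets with no strongly discrete subsequence. This $\{\Delta_n\}$ witnesses (3): if $\sup_n\min\{|f(x)|:x\in\Delta_n\}=\infty$ held for some $f\in C(X)$, I would pass to a subsequence along which these minima diverge, thin it so that the ranges of $|f|$ on its members are pairwise separated by gaps exceeding $3$, and invoke Half A to extract a strongly discrete subsequence -- a contradiction. For $(3)\Rightarrow(2)$, Half B first shows a sequence as in (3) can have no strongly discrete subsequence (such a subsequence $\{\Delta_{n_k}\}$ would carry an $f\in C(X)$ with $\min\{|f(x)|:x\in\Delta_{n_k}\}\to\infty$). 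Player ONE then wins $\Gamma'(X)$ by answering each position $(S_0,\dots,S_{2k})$ with the $\Delta_n$ of least index greater than all previously used and disjoint from $S_0\cup\cdots\cup S_{2k}$ (only finitely many $\Delta_n$ meet this finite set); ONE's moves form a subsequence of $\{\Delta_n\}$, hence a non-strongly-discrete family, and as strong discreteness of the family and of its union agree, $S_1\cup S_3\cup\cdots$ is not strongly discrete.

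The substantive step is $(2)\Rightarrow(1)$, where a winning strategy $\sigma$ for ONE in $\Gamma'(X)$ is converted into one for ONE in the Oxtoby game $G_{II}(C_p(X))$; Theorem~\ref{thox} then gives that $C_p(X)$ is meager. Fix reals $c_k$ with gaps exceeding $3$. Whenever TWO plays a basic open set $V_{2n}$, let $S_{2n}$ collect its support points not previously seen, put $S_{2n+1}=\sigma(S_0,\dots,S_{2n})$, and let ONE respond with $V_{2n+1}=V_{2n}\cap\{f:f(x)\in(c_n-1,c_n+1)\text{ for all }x\in S_{2n+1}\}$. This is non-empty, since $S_{2n+1}$ is disjoint from the (finite) support of $V_{2n}$, so the Tychonoff property lets one glue a witness of $V_{2n}$ to the constant value $c_n$ on $S_{2n+1}$. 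Now if some $f$ lay in $\bigcap_n V_n$, then on each $S_{2k+1}$ the function $f$ would fall within the separated intervals $(c_k-1,c_k+1)$, whence Half A would make the \emph{whole} family $\{S_{2k+1}\}$ strongly discrete; but $(S_0,S_1,\dots)$ is a play following the winning strategy $\sigma$, so its odd union is not strongly discrete -- a contradiction. Hence $\bigcap_n V_n=\emptyset$ and ONE wins $G_{II}(C_p(X))$.

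The main obstacle is precisely this translation. The naive version, forcing $f$ merely to be \emph{large} on each $S_{2k+1}$, fails: a function in the intersection would then yield only a strongly discrete \emph{subsequence}, which does not contradict the winning strategy; the remedy is the two-sided pinning into narrow, mutually separated intervals around the $c_k$, which upgrades ``a function in the intersection'' to ``the entire odd union is strongly discrete''. The rest -- non-emptiness of ONE's replies, the check that $(S_0,S_1,\dots)$ is a legitimate play of $\Gamma'(X)$, and the two halves of the lemma, where the Tychonoff property and the separated-level-set construction do the work -- is routine. (One may also prove $(3)\Rightarrow(1)$ directly: $C(X)=\bigcup_m F_m$ with $F_m=\{f:\min\{|f(x)|:x\in\Delta_n\}\le m\text{ for all }n\}$, each $F_m$ being closed and nowhere dense, the latter because on some $\Delta_n$ disjoint from the support of a prescribed basic neighbourhood one can lift $f$ above $m$.)
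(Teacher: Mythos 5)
Your proof is correct, but note that the paper does not actually prove this theorem --- it is quoted from \cite{pyt1} as known --- so the only available benchmark is the paper's Baire-one analogue, Theorem~\ref{th22}, whose proof is organized quite differently. There, the passage from meagerness to the combinatorial condition is a long direct construction (Claims 1--4) producing a function lying in $\bigcap_i \overline{U_i}$ and hence escaping every nowhere dense set; you outsource exactly that hard step to the Pytkeev--Tkachuk--van Douwen theorem (meager $\Rightarrow$ not Baire $\Rightarrow$ some disjoint sequence of finite sets has no strongly discrete subsequence), which the paper states as known, and then only need your two-sided lemma to match ``no strongly discrete subsequence'' with condition (3). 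This is legitimate and much shorter, but not self-contained: the hard half of PTvD is essentially what Pytkeev proves. For the game implication, the paper's analogue goes strategy $\Rightarrow$ combinatorial condition $\Rightarrow$ meager via the closed nowhere dense sets $F_m$ (which is precisely your parenthetical remark), whereas you convert the $\Gamma'(X)$-strategy directly into a winning strategy for ONE in $G_{II}(C_p(X))$ and invoke Theorem~\ref{thox}(1); your key device --- pinning $f$ into narrow, mutually separated intervals around the $c_k$ on each $S_{2k+1}$, so that any $f\in\bigcap_n V_n$ forces the \emph{entire} odd union to be strongly discrete, contradicting the winning strategy --- is exactly what makes this translation work. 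The details you defer are genuinely routine: ONE must first shrink TWO's arbitrary open moves to basic ones, and passing between strong discreteness of a disjoint family of finite sets and of its union uses the Tychonoff separation of finitely many points by open sets with pairwise disjoint closures.
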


\medskip

A subset $A$ of a space $X$ is called a {\it bounded} subset (in
$X$) if every continuous real-valued function on $X$ is bounded on
$A$.

\medskip

\begin{remark}(Proposition 5.1 in \protect\cite{dkm}, Corollary 3.3 in \protect\cite{LM} for an infinite
pseudocompact subspace $A$)  If $X$ is a space containing an
infinite bounded subset $A$ (e.g. a non-trivial convergent
sequence) then $C_p(X)$ is meager.
\end{remark}

\section{Baireness for space of Baire-one functions}
A $Coz_{\delta}$-subset of $X$ containing $x$ is called a {\it
$Coz_{\delta}$ neighborhood} of $x$.

\medskip

\begin{definition}  A set $A\subseteq X$ is called {\it strongly
$Coz_{\delta}$-disjoint}, if there is a pairwise disjoint
collection $\{F_a: F_a$ is a $Coz_{\delta}$ neighborhood of $a$,
$a\in A\}$  such that $\{F_a: a\in A\}$ is a {\it completely
$Coz_{\delta}$-additive system}, i.e. $\bigcup\limits_{b\in B}
F_b\in Coz_{\delta}$ for each $B\subseteq A$.

A disjoint sequence $\{\Delta_n: n\in \mathbb{N}\}$ of (finite)
sets is  {\it strongly $Coz_{\delta}$-disjoint} if the set
$\bigcup\{\Delta_n: n\in \mathbb{N}\}$ is strongly
$Coz_{\delta}$-disjoint.

\end{definition}

\medskip

Let $Game(X)$ be also a modification of the game $\Gamma(X)$ in
which player ONE wins if  the set $S_1\cup S_3\cup S_5\cup ...$ is
{\it not} strongly $Coz_{\delta}$-disjoint. If player ONE does not
win, then player TWO wins.

\medskip

\begin{lemma}\label{lem32} Let $X$ be a Hausdorff space and $\{F_i: i\in \mathbb{N}\}$ is a disjoint completely $Coz_{\delta}$-additive
system. Then any function on $\bigcup F_i$ which is constant on
each $F_i$ can be extended to a Baire-one function on $X$.
\end{lemma}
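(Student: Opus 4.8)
The plan is to construct the extension explicitly as a uniformly convergent series of Baire-one functions, reading off the values from the binary digits of the prescribed constants and using complete $Coz_\delta$-additivity to keep each digit-set inside $Coz_\delta$. Write $Y=\bigcup_i F_i$; taking $B=\mathbb{N}$ in the definition shows $Y\in Coz_\delta$. Denote by $f$ the given function, with $f\equiv c_i$ on $F_i$. I will first treat the case $c_i\in[0,1]$ and afterwards reduce the general case to it.

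For the bounded case, expand each value in binary, $c_i=\sum_{n\ge1}b_{i,n}2^{-n}$ with $b_{i,n}\in\{0,1\}$, and for each $n$ put $P_n=\bigcup\{F_i:b_{i,n}=1\}$ and $Q_n=\bigcup\{F_i:b_{i,n}=0\}$. The crucial observation is that complete $Coz_\delta$-additivity makes both $P_n$ and $Q_n$ members of $Coz_\delta$, and they are disjoint; hence the second part of Proposition~\ref{pr1} provides a Baire-one function $g_n\colon X\to[0,1]$ with $Q_n=[g_n=0]$ and $P_n=[g_n=1]$, so that $g_n\equiv b_{i,n}$ on every $F_i$. Setting $g=\sum_{n\ge1}2^{-n}g_n$, each partial sum is a finite linear combination of Baire-one functions, hence Baire-one, and the series converges uniformly (it is dominated by $\sum 2^{-n}$); since $B_1(X)$ is a vector space closed under uniform convergence, $g\in B_1(X)$, and for $x\in F_i$ we get $g(x)=\sum_n b_{i,n}2^{-n}=c_i$, as desired.

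For the unbounded case I would fix a homeomorphism $\varphi\colon\mathbb{R}\to(0,1)$ and apply the bounded construction to $\varphi\circ f$ (constant $\varphi(c_i)\in(0,1)$ on each $F_i$), obtaining a Baire-one $g\colon X\to[0,1]$ with $g\equiv\varphi(c_i)$ on $F_i$. To be able to compose with $\varphi^{-1}$ I must push the values off the endpoints: using Proposition~\ref{pr1} choose a Baire-one $k\colon X\to[0,1]$ with $[k=0]=Y$ and set $g'=(1-k)g+\tfrac12 k$. Then $g'$ is Baire-one (a product and sum of Baire-one functions), equals $g$ on $Y$, and a one-line estimate gives $g'(X)\subseteq(0,1)$. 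Finally $\varphi^{-1}\circ g'$ is the required extension: it is Baire-one because for open $U\subseteq\mathbb{R}$ one has $(\varphi^{-1}\circ g')^{-1}(U)=(g')^{-1}(\varphi(U))\in Zer_\sigma$, as $\varphi(U)$ is open in $\mathbb{R}$ and $g'$ is Baire-one, and on each $F_i$ it equals $\varphi^{-1}(\varphi(c_i))=c_i$. The construction is short, and the step carrying all the weight is the realization that complete $Coz_\delta$-additivity is exactly what forces every digit-set $P_n,Q_n$ into $Coz_\delta$, so that Proposition~\ref{pr1} yields functions taking the \emph{exact} values $0$ and $1$ on the relevant unions; the only genuinely fiddly point I expect is the reduction to bounded values while keeping the extension strictly inside $(0,1)$.
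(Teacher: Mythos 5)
Your proof is correct, and it takes a genuinely different route from the paper's in the main (bounded) step. The paper invokes the insertion theorem of Luke\v{s}--Mal\'y--Zaji\v{c}ek (Theorem 3.2 of their book): it forms the lower and upper envelopes $t\leq s$ of the prescribed data and verifies the separation condition $[s\leq a]$, $[t\geq b]$ via complete $Coz_{\delta}$-additivity and Proposition~\ref{pr1}, letting the insertion theorem produce the extension. You avoid the insertion theorem entirely: expanding each constant $c_i\in[0,1]$ in binary, you observe that each digit partitions $\bigcup F_i$ into two subfamily-unions $P_n,Q_n$, which complete additivity places in $Coz_{\delta}$, so Proposition~\ref{pr1} yields $g_n$ with $g_n\equiv b_{i,n}$ on $F_i$, and $\sum_n 2^{-n}g_n$ is Baire-one by closure of $B_1(X)$ under linear combinations and uniform convergence. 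This is more self-contained (everything you use is stated in the paper's preliminaries) at the cost of being specific to real-valued targets, whereas the insertion-theorem route is the ``standard'' one and generalizes more readily. Your reduction of the unbounded case is essentially the paper's (compose with a homeomorphism onto a bounded interval, then repair the endpoint values), differing only in the repair: the paper multiplies by a Baire-one $\psi$ vanishing on $h^{-1}(\{-1,1\})$ and equal to $1$ on $F$, while you convex-combine toward $\tfrac12$ using a function vanishing exactly on $\bigcup F_i$; both work, and your estimate $g'(x)\in\bigl(\tfrac12 k(x),\,1-\tfrac12 k(x)\bigr)$ off $\bigcup F_i$ is valid. The only point worth a sentence in a final write-up is the degenerate case where some $P_n$ or $Q_n$ is empty, so that the two-set form of Proposition~\ref{pr1} is applied with an empty member; this is harmless (apply the one-set form to the nonempty member and rescale), but you should say so.
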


\begin{proof} Step 1: The function is bounded. Proceed as in the proof of Proposition 7 , implication (iii)
implies (i) in \protect\cite{KS}.

Let $F=\bigcup F_i$ and $f: F\rightarrow\mathbb{R}$ such that
$f(F_i)=a_i$ for some $a_i\in \mathbb{R}$ and  $\{a_i:
i\in\mathbb{N}\}$ is bounded in $\mathbb{R}$.

Set $$ t:=\left\{
\begin{array}{lcl}
f \, \, \, \, \, \, \, \, \, \, on \, \, \, \,  F\\
\inf f(F) \, \, \, on \, \, \, X\setminus F,\\
\end{array}
\right.
$$

and

$$ s:=\left\{
\begin{array}{lcl}
f \, \, \, \, \, \, \, \, \, \, on \, \, \, \,  F\\
\sup f(F) \, \, \, on \, \, \, X\setminus F.\\
\end{array}
\right.
$$

According to Theorem 3.2 in \protect\cite{lmz1}, there exists a
Baire-one function $g$ on $X$ satisfying $t\leq g\leq s$ if and
only if the following condition is satisfied: {\it given a couple
of real numbers $a<b$, there is a Baire-one function $\varphi$ on
$X$ such that}

$\varphi=0$ on $A:=[s\leq a]$ and $\varphi=1$ on $B:=[t\geq b]$.

 Using the completely
$Coz_{\delta}$-additivity of $\{F_i: i\in \mathbb{N}\}$ it is easy
to check this condition.

So assume that $a<b$ are given. Without loss of generality we may
suppose that $\inf f(F)\leq a<b \leq \sup f(F)$.

 Then $A=[s\leq a]=\bigcup
\{F_{a_i}: a_i\leq a\}$ and $B=[t\geq b]=\bigcup\{F_{a_i}: a_i\geq
b\}$. Note that $A\cap B=\emptyset$. Since $\{F_i: i\in
\mathbb{N}\}$ is a completely $Coz_{\delta}$-additivity system,
then $A,B\in Coz_{\delta}(X)$. By Proposition \ref{pr1}, there
exists a Baire-one function $\varphi$ on $X$ with values in
$[0,1]$ such that $A=[\varphi=0]$ and $B=[\varphi=1]$. Thus there
is a Baire-one function $g$ on $X$ such that $t\leq g \leq s$.
Obviously, $g\upharpoonright F=f$.

Step 2: The function is unbounded. Proceed as in the proof of
Proposition 8, implication (iii) implies (i) in \protect\cite{KS}.

Let $f$ be a unbounded function on $F$ constant on each $F_i$ thus
there is a homeomorphism $\varphi: \mathbb{R}\rightarrow (-1,1)$
such that $\inf (\varphi\circ f)(F)<0< \sup (\varphi\circ f)(F)$.

The key thing that $\varphi\circ f$ is a bounded function on $F$
constant on each $F_i$ and hence we may use Step 1. Let $h$ be a
Baire-one function on $X$ such that $h\upharpoonright
F=\varphi\circ f$, $\sup h(X)=\sup(\varphi\circ f)(F)\leq 1$ and
$-1\leq \inf (\varphi\circ f)(F)=\inf h(X)$.

By setting $G:=h^{-1}(\{-1\}\cup \{1\})$ we obtain a
$Coz_{\delta}$-subset of $X$ which is disjoint with $F$. Since $F$
is a $Coz_{\delta}$-subset of $X$, by Proposition \ref{pr1}, there
exists a Baire-one function $\psi$ on $X$ with values in $[0,1]$
such that $G=[\psi=0]$ and $F=[\psi=1]$. One can readily verify
that $g:=\varphi^{-1}\circ (h\cdot \psi)$ is a Baire-one function
on $X$ which satisfies our requirements. This conclude the proof.

\end{proof}

\medskip

\begin{lemma}\label{lem101} Let $X$ be a Hausdorff space  and let
$\{F_i: i\in \mathbb{N}\}$ forms a disjoint completely
$Coz_{\delta}$-additive system. Then any family $\{L_i:
L_i\subseteq F_i, i\in\mathbb{N}\}$ of $Coz_{\delta}$ subsets of
$X$ is a completely $Coz_{\delta}$-additive system.

\end{lemma}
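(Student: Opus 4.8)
The plan is to fix an arbitrary $B\subseteq\mathbb{N}$ and prove that $\bigcup_{i\in B}L_i\in Coz_\delta(X)$; since $B$ is countable, every union below is countable. First I would record a convenient reformulation. Writing each $L_i=\bigcap_{n\in\mathbb{N}}U_{i,n}$ with $U_{i,n}$ cozero (possible as $L_i\in Coz_\delta$), and using that the $F_i$ are \emph{pairwise disjoint} together with $L_i\subseteq F_i$, one checks the identity $\bigcup_{i\in B}L_i=\bigcap_{n}W_n$, where $W_n=\bigcup_{i\in B}(U_{i,n}\cap F_i)$. Indeed, if $x$ lies in every $W_n$ then for each $n$ it lies in some $U_{i,n}\cap F_i$, and disjointness of the $F_i$ forces this index $i$ to be the same for all $n$; hence $x\in F_i\cap\bigcap_n U_{i,n}=L_i$. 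As $Coz_\delta$ is stable under countable intersections, it suffices to prove $W_n\in Coz_\delta(X)$ for each fixed $n$.

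Thus the lemma reduces to the following gluing principle: \emph{if $\{F_i\}$ is a disjoint completely $Coz_\delta$-additive family and $A_i\subseteq F_i$ are traces of cozero sets (more generally $Coz_\delta$-sets), then $\bigcup_{i\in B}A_i\in Coz_\delta$.} I would first dispose of the points outside $F_B:=\bigcup_{i\in B}F_i$: since $F_B\in Coz_\delta$ by complete additivity, it is enough to produce a Baire-one function $f$ on $X$ with $[f=0]\cap F_i=A_i$ for every $i\in B$, for then $\bigcup_{i\in B}A_i=[f=0]\cap F_B$ is the intersection of two $Coz_\delta$-sets, hence $Coz_\delta$. In particular spurious zeros of $f$ on $X\setminus F_B$ are harmless.

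To build such an $f$ I would glue along the partition. By Lemma \ref{lem32} the index function $\kappa$ with $\kappa\upharpoonright F_i$ constant (equal to chosen distinct $c_i$) is Baire-one; regarding $\kappa$ as a map into the discrete copy of $\mathbb{N}$ and pairing it with $\mathrm{id}_X$, composition with the continuous evaluation $(i,x)\mapsto h_i(x)$ (where $h_i$ is a continuous representative of the relevant cozero or zero set) produces a Baire-one function on $X$ realizing $\bigcup_{i\in B}(F_i\cap U_i)$ as a cozero-type level set and $\bigcup_{i\in B}(F_i\cap Z_i)$ as a zero-type level set. This yields one Borel class for free, namely $\bigcup_{i\in B}(F_i\cap Z_i)\in Coz_\delta$ for zero sets $Z_i$, equivalently $\bigcup_{i\in B}(F_i\cap U_i)\in Zer_\sigma$ for cozero $U_i$.

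The main obstacle is exactly the \emph{other} class, which is the one actually needed: upgrading $W_n=\bigcup_{i\in B}(U_{i,n}\cap F_i)$ from $Zer_\sigma$ to $Coz_\delta$. A continuous evaluation cannot give this, since a continuous slice cannot have a prescribed cozero set as its zero set; so here the full two-sided strength of complete additivity must be used, namely that \emph{every} sub-union $\bigcup_{i\in C}F_i$ ($C\subseteq\mathbb{N}$) is simultaneously $Coz_\delta$ and $Zer_\sigma$. I would complete this step as in (\cite{KS}, Propositions 7--8): for each $n$ exhibit the two disjoint $Coz_\delta$-sets determined by $W_n$ on $F_B$ and separate them by a Baire-one function via Proposition \ref{pr1}, invoking the in-between theorem for Baire-one functions ((\cite{lmz1}, Theorem 3.2)) already used in the proof of Lemma \ref{lem32}. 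Alternatively, once each $W_n$ is shown to be ambiguous (both $Coz_\delta$ and $Zer_\sigma$), its indicator $\mathbf{1}_{W_n}$ is Baire-one and $f=\sum_n 2^{-n}(1-\mathbf{1}_{W_n})$ is Baire-one as a uniform limit of Baire-one functions, with $[f=0]=\bigcap_n W_n=\bigcup_{i\in B}L_i$. Combining this with the reduction of the second paragraph finishes the proof.
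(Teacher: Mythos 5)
Your reduction in the first paragraph is correct but does not actually make progress: the set $W_n=\bigcup_{i\in B}(U_{i,n}\cap F_i)$ is a union of $Coz_\delta$-sets $L_i':=U_{i,n}\cap F_i$ with $L_i'\subseteq F_i$, so the claim ``$W_n\in Coz_\delta$'' is literally another instance of the lemma you are proving. You then correctly identify this as ``the main obstacle'' (the gluing/evaluation trick only yields the $Zer_\sigma$ class for unions of traces of cozero sets, not the needed $Coz_\delta$ class), but the two completions you offer are circular. Separating $W_n$ from $F_B\setminus W_n$ via Proposition \ref{pr1} requires both sets to \emph{already} be $Coz_\delta$, which is the assertion to be proved; and the indicator-function argument explicitly presupposes that each $W_n$ ``is shown to be ambiguous,'' i.e.\ presupposes the conclusion. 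So the proof has a genuine gap at exactly the point where all the work lies.

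The missing idea is a subtraction rather than a union. The paper extends the index function $f(F_i)=i$ to a Baire-one $\widetilde f$ on $X$ (Lemma \ref{lem32}), puts $S_i=\widetilde f^{-1}(i-\tfrac12,i+\tfrac12)\in Zer_\sigma(X)$, so that $F_i\subseteq S_i$ and the $S_i$ separate the $F_i$, and then observes that for any $B\subseteq\mathbb N$,
$$\bigcup_{i\in B}L_i=\Bigl(\bigcup_{i\in B}F_i\Bigr)\setminus\bigcup_{i\in B}(S_i\setminus L_i).$$
Here $S_i\setminus L_i$ is $Zer_\sigma$ (a $Zer_\sigma$-set intersected with the $Zer_\sigma$ complement of a $Coz_\delta$-set), hence so is the countable union; removing a $Zer_\sigma$-set from the $Coz_\delta$-set $\bigcup_{i\in B}F_i$ (complete additivity) gives a $Coz_\delta$-set. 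This sidesteps the class mismatch you ran into: instead of building $\bigcup L_i$ up as a union of $Coz_\delta$ pieces (wrong closure property), one carves it out of a single $Coz_\delta$-set. Note that this argument applies directly to the original $L_i$, so your decomposition $L_i=\bigcap_n U_{i,n}$ and the passage to the $W_n$ are an unnecessary detour. If you incorporate the $S_i$-envelopes and the displayed identity, your write-up closes; without them it does not.
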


\begin{proof}   Let $F=\bigcup \{F_i: i\in \mathbb{N}\}$. Consider the function $f: F
\rightarrow \mathbb{R}$ such that $f(F_{i})=i$ for every $i\in
\mathbb{N}$. Let $V$ be an open set of $\mathbb{R}$. Since
$f^{-1}(V)=\bigcup\{F_i: i\in V\}\in \mathrm{Zer}_{\sigma}(F)$
(because $\bigcup\{F_j: j\not\in V\}\in \mathrm{Coz}_{\delta}(X)$
and, hence, $\bigcup\{F_j: j\not\in V\}\in
\mathrm{Coz}_{\delta}(F)$), $f\in B_1(F)$.

 By Lemma \ref{lem32}, there is $\widetilde{f}\in B_1(X)$
such that $\widetilde{f}\upharpoonright F=f$. Let
$S_i=\widetilde{f}^{-1}(i-\frac{1}{2},i+\frac{1}{2})$ for every
$i\in \mathbb{N}$. Then $S_i\in Zer_{\sigma}(X)$. Since $L_i\in
Coz_{\delta}(X)$ for every $i\in \mathbb{N}$, we get $S_i\setminus
L_i\in Zer_{\sigma}(X)$ for every $i\in \mathbb{N}$.

Let $\{i_k: k\in \mathbb{N}\}$ be a subsequence of $\{i:
i\in\mathbb{N}\}$. Then $D=\bigcup \{S_{i_k}\setminus L_{i_k}:
k\in \mathbb{N}\}\in Zer_{\sigma}(X)$. Since $\bigcup\{F_{i_k}:
k\in\mathbb{N}\}\in Coz_{\delta}(X)$, $\bigcup\{L_{i_k}:
k\in\mathbb{N}\}=(\bigcup\{F_{i_k}: k\in \mathbb{N}\})\setminus
D\in Coz_{\delta}(X)$. Thus, $\{L_i: i\in\mathbb{N}\}$ forms a
completely $Coz_{\delta}$-additive system.
\end{proof}

\medskip

\begin{definition} A family of sets is {\it strongly $Coz_{\delta}$-disjoint} if
each set has a superset such that the family of all the supersets is disjoint and
completely $\mathrm{Coz}_{\delta}$-additive.
\end{definition}

\medskip
\begin{proposition}\label{pr34} A pairwise disjoint sequence $\{\Delta_n: n\in \mathbb{N}\}$ of non-empty finite subsets of $X$ is
strongly $Coz_{\delta}$-disjoint in the new sense if and only if it is strongly
$Coz_{\delta}$-disjoint according to Definition 1.
\end{proposition}

\begin{proof}
Let $\{\Delta_n: n\in \mathbb{N}\}$ be a strongly $Coz_{\delta}$-disjoint family of non-empty
finite subsets of $X$. Then there is a family $\{B_n: n\in \mathbb{N}\}$ such that
$\Delta_n\subseteq B_n$ for each $n$, and it is
disjoint and completely $\mathrm{Coz}_{\delta}$-additive. Let
$h:\bigcup\limits_{n\in \mathbb{N}}B_n\rightarrow \mathbb{R}$ be a
function such that $h(B_n)=n$ for every $n\in \mathbb{N}$. Then,
by Lemma \ref{lem32}, there is $\widetilde{h}\in B_1(X)$ such that
$\widetilde{h}\upharpoonright \bigcup\limits_{n\in
\mathbb{N}}B_n=h$. Let
$S_n=\widetilde{h}^{-1}((n-\frac{1}{2},n+\frac{1}{2}))$ and
$P_n=\widetilde{h}^{-1}(n)$ for every $n\in \mathbb{N}$. Since
$\widetilde{h}\in B_1(X)$, we get that $P_n\subseteq S_n\in
\mathrm{Zer}_{\sigma}$ and $\Delta_n\subseteq B_n\subseteq
P_n\in \mathrm{Coz}_{\delta}$. Since
$\Delta_n=\{x_{1,n},x_{2,n},...,x_{k(n),n}\}$ is finite, there are
co-zero neighborhoods $W(x_{j,n})$ of $x_{j,n}$, $j=1,...,k(n)$
such that $W(x_{i,n})\cap W(x_{j,n})=\emptyset$ for $i\neq j$ and
$i,j\in \{1,...,k(n)\}$.

Let $F_{j,n}=W(x_{j,n})\cap B_n$ for $j=1,...,k(n)$ and every
$n\in \mathbb{N}$.

We claim that $\gamma=\{F_{j,n}: j=1,...,k(n)$, $n\in
\mathbb{N}\}$ is completely $\mathrm{Coz}_{\delta}$-additive.

1. Since $W(x_{j,n})$ is co-zero and $B_n\in
\mathrm{Coz}_{\delta}$ then $F_{j,n}\in
\mathrm{Coz}_{\delta}$.

2. Let $\gamma_1\subseteq \gamma$. We show that $\bigcup \gamma_1\in \mathrm{Coz}_{\delta}$. Note that $\gamma=\bigcup_{n\in \mathbb{N}} F_n$ where $F_n=\{F_{j,n}: j=1,..., k(n)\}$. For each $n$, let $\gamma_{1,n}=\gamma_1\cap F_n$. Let $M=\{n\in \mathbb{N}: \gamma_{1,n}\neq \emptyset\}$.
Then $\gamma_1=\bigcup_{n\in M} \gamma_{1,n}$. For each $n\in M$, $\bigcup \gamma_{1,n}\in \mathrm{Coz}_{\delta}$ since $\gamma_{1,n}$ is finite.
Since $S_n\in \mathrm{Zer}_{\sigma}$ for each $n$, we have $S_n\setminus (\bigcup \gamma_{1,n})\in \mathrm{Zer}_{\sigma}$ for each $n\in M$. As a result,
$\bigcup_{n\in M} \biggl(S_n\setminus (\bigcup \gamma_{1,n})\biggl)\in \mathrm{Zer}_{\sigma}$. Since $\{B_n\}$ is completely
$\mathrm{Coz}_{\delta}$-additive, $\bigcup_{n\in M}  B_{n}\in
\mathrm{Coz}_{\delta}$. Note the following equality.

 $\bigcup \gamma_1=(\bigcup_{n\in M}
B_{n})\setminus \Bigl[\bigcup_{n\in M} \Bigl(S_{n}\setminus
(\bigcup \gamma_{1,n})\Bigr)\Bigr]\in \mathrm{Coz}_{\delta}$.

Thus, $\gamma$ is completely $\mathrm{Coz}_{\delta}$-additive. It follows that $\bigcup \{\Delta_n: n\in \mathbb{N}\}$ is strongly  $\mathrm{Coz}_{\delta}$-disjoint.

The other direction is easy to check, so the proof is not given.

\end{proof}

\begin{theorem}\label{th22} For a Tychonoff space $X$ the following assertions are
equivalent:

1. $B_1(X)$ is meager;

2. there is a pairwise disjoint sequence $\{\Delta_n : n\in
\mathbb{N}\}$ of non-empty finite subsets of $X$ such that \,
$\sup\limits_{n\in \mathbb{N}}\Bigl(\min \{|f(x)|: x\in
\Delta_n\}\Bigr)<\infty$ for each $f\in B_1(X)$;

3. there is a pairwise disjoint sequence of non-empty finite subsets of $X$ no subsequence of which is strongly $Coz_{\delta}$-disjoint;

4. ONE has a winning strategy in the game $Game(X)$.

\end{theorem}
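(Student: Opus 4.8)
The plan is to prove the equivalences by running around the cycle, using the two Oxtoby games as the bridge between the function space $B_1(X)$ and the combinatorics of $X$. First, $(1)\Leftrightarrow(5)$ is immediate: applying Theorem \ref{thox}(1) to the space $B_1(X)$ says precisely that $B_1(X)$ is meager if and only if ONE has a winning strategy in $G_{II}(B_1(X))$. It therefore suffices to establish $(2)\Rightarrow(1)$, $(2)\Leftrightarrow(3)$, $(3)\Rightarrow(4)$, $(4)\Rightarrow(5)$, and the single reverse implication $(5)\Rightarrow(2)$ that closes the loop; the last of these is where the real work lies.

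For $(2)\Leftrightarrow(3)$ I would use the extension machinery of Lemma \ref{lem32} and Proposition \ref{pr34}. For $(2)\Rightarrow(3)$ (contrapositive), if the sequence witnessing $(2)$ had a strongly $Coz_\delta$-disjoint subsequence $\{\Delta_{n_k}\}$, then by Proposition \ref{pr34} there is a disjoint completely $Coz_\delta$-additive family $(B_k)$ with $\Delta_{n_k}\subseteq B_k$, and Lemma \ref{lem32} extends the assignment $B_k\mapsto k$ to some $\widetilde h\in B_1(X)$; then $\min\{|\widetilde h(x)|:x\in\Delta_{n_k}\}=k\to\infty$, contradicting the boundedness in $(2)$. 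For $(3)\Rightarrow(2)$ (contrapositive), if some $f\in B_1(X)$ had $\sup_n\min\{|f(x)|:x\in\Delta_n\}=\infty$, I would pass to a subsequence on which the value bands of $|f|$ are pairwise separated and march to infinity (choosing $n_{k+1}$ so that $\min_{\Delta_{n_{k+1}}}|f|>\max_{\Delta_{n_k}}|f|$); since the preimage under a Baire-one function of a closed set is $Coz_\delta$, the sets $f^{-1}(C_k)$, with $C_k$ the $k$-th (closed, separated) band, form a disjoint completely $Coz_\delta$-additive family, so that subsequence is strongly $Coz_\delta$-disjoint by Proposition \ref{pr34}, contradicting $(3)$.

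Next, $(2)\Rightarrow(1)$ is the standard decomposition $B_1(X)=\bigcup_m E_m$ with $E_m=\{f:\min\{|f(x)|:x\in\Delta_n\}\le m\text{ for all }n\}$: each $E_m$ is closed in the pointwise topology, being a countable intersection of finite unions of the closed sets $\{f:|f(x)|\le m\}$, and it is nowhere dense, since given any basic neighbourhood with finite support $F$ one picks $n$ with $\Delta_n\cap F=\emptyset$ and, by complete regularity, adds a continuous bump supported off $F$ to push $|f|$ above $m$ on all of $\Delta_n$, producing a point of the neighbourhood outside $E_m$. For $(3)\Rightarrow(4)$, ONE's winning strategy in $Game(X)$ is just to answer each position by a set $\Delta_n$ taken fresh from the witnessing sequence (disjoint from the finitely many points already used); ONE's resulting union is then a subsequence of $\{\Delta_n\}$, which by $(3)$ is not strongly $Coz_\delta$-disjoint, so ONE wins every such play. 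For $(4)\Rightarrow(5)$, ONE plays $G_{II}(B_1(X))$ while running its combinatorial strategy $\sigma$ in parallel: it feeds the finitely many new support points of each neighbourhood of TWO into $Game(X)$ as TWO's move, answers with the fresh finite set $\Delta_k=\sigma(\dots)$, and shrinks to the sub-neighbourhood forcing $f(\Delta_k)\subseteq(k-\frac{1}{4},k+\frac{1}{4})$, which is nonempty because $\Delta_k$ is disjoint from the current support. Any $f\in\bigcap_n V_n$ would then satisfy $f(\Delta_k)\subseteq C_k:=[k-\frac{1}{4},k+\frac{1}{4}]$, so the family $f^{-1}(C_k)$ would be disjoint and completely $Coz_\delta$-additive and, by Proposition \ref{pr34}, would render ONE's combinatorial union strongly $Coz_\delta$-disjoint, i.e.\ ONE would \emph{lose} $Game(X)$, contradicting that $\sigma$ wins; hence $\bigcap_n V_n=\emptyset$ and ONE wins $G_{II}(B_1(X))$.

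The genuinely hard step, which I expect to be the main obstacle, is the closing implication $(5)\Rightarrow(2)$ (equivalently, through Oxtoby, $(1)\Rightarrow(2)$): extracting from a \emph{single} winning strategy $\tau$ for ONE in $G_{II}(B_1(X))$ a \emph{single} sequence $\{\Delta_n\}$ all of whose subsequences fail to be strongly $Coz_\delta$-disjoint. A single play of $\tau$ only certifies the full union and says nothing about arbitrary subsequences, so this demands a fusion/tree argument: one runs $\tau$ against a family of TWO-plays that force $|f|$ to grow on fresh finite sets while letting TWO branch over which of these sets to ``keep,'' arranging the bookkeeping so that every subsequence of the extracted $\{\Delta_n\}$ is the active support along some branch. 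The crux of the argument is that if some subsequence \emph{were} strongly $Coz_\delta$-disjoint, then Lemma \ref{lem32} would produce a Baire-one function lying in the corresponding nested intersection of neighbourhoods and thus defeat $\tau$; one must simultaneously rule out that $\tau$ wins cheaply by merely shrinking the value-intervals at finitely many coordinates to the empty set, which is exactly where the freedom to redefine a Baire-one function on a fresh finite set (Lemma \ref{lem32}) and the homogeneity of $B_1(X)$ enter. Making this simultaneous control of all subsequences precise is the technical heart of the whole theorem.
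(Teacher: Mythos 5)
Most of your proposal is sound and runs close to the paper: your $(2)\Leftrightarrow(3)$, $(2)\Rightarrow(1)$ and the game implications $(3)\Rightarrow(4)$, $(4)\Rightarrow(5)$ are correct (the last two are a nice, slightly more direct packaging than the paper's $(2)\Leftrightarrow(4)$ plus Oxtoby). But the cycle you set up stands or falls on the closing implication $(5)\Rightarrow(2)$, and there you do not give a proof: you name the obstacle (a single play of a winning strategy $\tau$ certifies nothing about arbitrary subsequences), propose a ``fusion/tree argument'' over all branches, and then concede that ``making this simultaneous control of all subsequences precise is the technical heart of the whole theorem.'' That is precisely the part that cannot be left as a sketch --- without it none of the equivalences with (1) and (5) on one side and (2), (3), (4) on the other is established, and the theorem reduces to the easy half.

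The paper avoids the difficulty you describe by choosing a different target for the hard direction: it proves $(1)\Rightarrow(3)$ in contrapositive-by-contradiction form. One assumes $B_1(X)=\bigcup_n F_n$ with each $F_n$ nowhere dense \emph{and} that every pairwise disjoint sequence of finite sets has a strongly $Coz_{\delta}$-disjoint subsequence; the nowhere-density is used to build one concrete sequence $\{\Delta_i\}$ of finite supports together with finite families of basic open sets $\gamma_i$ avoiding $\overline{F_i}$ and satisfying a quantitative approximation property (Claim 1). The blanket hypothesis is then applied \emph{once}, to this particular sequence, to extract a single strongly $Coz_{\delta}$-disjoint subsequence; after normalizing the $Coz_{\delta}$-neighbourhoods (Claims 2--3) one constructs a pointwise convergent sequence of Baire-one functions whose limit $f$ lies in $\overline{U_i}$ for every $i$ and hence in no $F_i$ (Claim 4), a contradiction. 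Because the subsequence is extracted from a fixed sequence produced by the meager decomposition, there is no need to control all subsequences simultaneously or to branch over plays of a strategy --- exactly the bookkeeping your sketch identifies as the crux and does not resolve. If you want to keep your cycle, you should replace $(5)\Rightarrow(2)$ by $(1)\Rightarrow(3)$ proved along these lines (your $(1)\Leftrightarrow(5)$ and $(3)\Rightarrow(2)$ then close the loop); as written, the proposal has a genuine gap at its central step.
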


\begin{proof}

$(1)\Rightarrow(3)$. Let $B_1(X)=\bigcup_{n\in\mathbb{N}} F_n$,
where $F_n$ is nowhere dense in $B_1(X)$ and $F_n\subseteq
F_{n+1}$ for every $n\in \mathbb{N}$. Suppose for a contradiction
that every sequence $\{\Delta_n : n\in \mathbb{N}\}$ of pairwise
disjoint finite subsets of $X$ contains a strongly
$\mathrm{Coz}_{\delta}$-disjoint subsequence.

\medskip

{\it Claim 1. There are a sequence $\{\Delta_i : i\in
\mathbb{N}\}$ of pairwise disjoint finite subsets of $X$, a
sequence $\{\gamma_i: i\in \mathbb{N}\}$ of finite families of
basis open sets in $\mathbb{R}^X$, and a sequence $\{m_i: i\in
\mathbb{N}\}\subseteq \mathbb{N}$ such that the following
conditions hold for every $i\in \mathbb{N}$:

$(a')$ $1\leq m_1$ and $m_i+2\leq m_{i+1}$;

$(b')$ $\overline{U}^{\mathbb{R}^X}\cap F_i=\emptyset$ and
$supp(U)\subseteq \bigcup_{j=1}^i \Delta_j$ for every $U\in
\gamma_i$;

$(c')$ if $f\in U\in \gamma_i$, then $|f(x)|\leq m_i$ for every
$x\in supp(U)$;

$(d')$ if $\varphi: A_i:=\bigcup\limits_{j=1}^i \Delta_j
\rightarrow \mathbb{R}$ is such that $\|\varphi\|_{A_i}\leq m_i$,
then there is a continuous function $f\in \bigcup \gamma_{i+1}$
such that $\|\varphi-f\|_{A_i}<\frac{1}{i}$. }

\medskip

{\it Proof of Claim 1.} For $i=1$, choose a basic open set
$U\subseteq \mathbb{R}^X$ such that
$\overline{U}^{\mathbb{R}^X}\cap F_1=\emptyset$ (this is possible
because $F_1$ is nowhere dense). Let

$\Delta_1:=supp(U)$, $\gamma_1:=\{U\}$, and
$m_1:=\sup\{\|f\|_{\Delta_1}: f\in U\}+2$.

Assume that for $1\leq i\leq n$, we have constructed $\Delta_i$,
$\gamma_i$ and $m_i$ which satisfy $(a')-(d')$.

Consider the compact set $K=\Bigl\{\varphi: dom(\varphi)=A_n$ and $\|\varphi\|_{A_n}\leq m_n
\Bigr\}$. For each $\varphi\in K$, choose a basic open subset $O_{\varphi}$ of $\mathbb{R}^X$ such that for each $\varphi_1\in O_{\varphi}$, we have

$\|\varphi_1\upharpoonright A_n- \varphi\|_{A_n}<\frac{1}{2^{n+1}}$. Since $F_{n+1}$ is nowhere dense, each $O_{\varphi}$ can be chosen
such that $\overline{O_{\varphi}}\cap F_{n+1}=\emptyset$ (closure taken in $\mathbb{R}^X$). Since $K$ is compact, there exists $\{\psi_1, ... , \psi_k\}\subset K$ such that $K\subset O_{\psi_1}\cup ... \cup O_{\psi_k}$. Let $U_t=O_{\psi_t}$ for each $t\leq k$.

Let $\gamma_{n+1}:=\{U_1,...,U_k\}$, $\Delta_{n+1}:=\bigcup_{t=1}^k supp(U_t)\setminus \bigcup_{i=1}^n
\Delta_i$ and $m_{n+1}:=m_n+\sup\Bigl\{\|f\|_{supp(U)}: f\in U\in \gamma_i$ and
$1\leq i\leq n+1\Bigr\}+1$.

It is clear that $\{\Delta_i\}_{i=1}^{n+1}$ is pairwise disjoint
and the conditions $(a')-(c')$ are satisfied. To check $(d')$, fix
$\varphi: A_n\rightarrow \mathbb{R}$ such that
$\|\varphi\|_{A_n}\leq m_n$. Choose $1\leq t\leq k$ such that
$\|\psi_t-\varphi\|_{A_n}\leq \frac{1}{2^{n+1}}$. Since
$supp(U_t)$ is finite, $U_t$ contains continuous functions, take
an arbitrary continuous function $f\in U_t$. Then, by (1), we have

$\|\varphi-f\|_{A_n}\leq
\|\varphi-\psi_t\|_{A_n}+\|\psi_t-f\|_{A_n}\leq
\frac{2}{2^{n+1}}=\frac{1}{2^n}<\frac{1}{n}.$

The claim is proved.   \, \, \, \, \, \, \, \, \, \, \, \, \, \,
\, \, \, \, \, \, \, \, \, \, \, \, \, \, \, \, \, \, \, \, \,
$\Box$

Now we redefine the sequences in Claim 1 to make simpler and
clearer their usage in what follows.

By assumption the sequence $\{\Delta_n: n\in \mathbb{N}\}$
constructed in Claim 1 contains a strongly
$\mathrm{Coz}_{\delta}$-disjoint subsequence $\{\Delta_{n_k}: k\in
\mathbb{N}\}$, where $1<n_1<n_2<\dots$. Put

$R_1:=F_1$, $\Omega_1:=\bigcup\limits_{i=1}^{n_1-1}\Delta_i$,
$\mu_1:=\bigcup\limits_{i=1}^{n_1-1} \gamma_i$, $l_1:=m_{n_1-1}$,

and for every $k\in \mathbb{N}$, we define

$R_{2k}:=F_{n_k}$, $\Omega_{2k}:=\Delta_{n_k}$,
$\mu_{2k}:=\gamma_{n_k}$, $l_{2k}:=m_{n_k}$ and

$R_{2k+1}:=F_{n_k+1}$,
$\Omega_{2k+1}:=\bigcup\limits_{i=n_k+1}^{n_{k+1}-1} \Delta_{i}$,
$\mu_{2k+1}:=\bigcup\limits_{i=n_k+1}^{n_{k+1}-1}\gamma_{i}$, and

$l_{2k+1}:=m_{n_{k+1}-1}$.

It is clear that $\{R_i: i\in\mathbb{N}\}$ is an increasing
sequence of nowhere dense sets in $B_1(X)$ such that
$B_1(X)=\bigcup_{n\in \mathbb{N}} R_i$. \, \, \, \, \, \, \, \, \, $\Box$

\medskip

{\it Claim 2. The sequence $\{R_i: i\in
\mathbb{N}\}$, the pairwise disjoint sequence $\{\Omega_i, i\in
\mathbb{N}\}$, and the sequences $\{\mu_i: i\in \mathbb{N}\}$ and
$\{l_i: i\in\mathbb{N}\}$ satisfy the following conditions $(i\in
\mathbb{N}):$

$(a)$ $1\leq l_1$ and $l_i+2\leq l_{i+1}$;

$(b)$ $\overline{U}^{\mathbb{R}^X}\cap R_i=\emptyset$ and
$supp(U)\subseteq \bigcup\limits_{j=1}^i \Omega_j$ for every $U\in
\mu_i$;

$(c)$ if $f\in U\in\mu_i$, then $|f(x)|\leq l_i$ for every $x\in
supp(U)$;

$(d)$ if $\varphi: A_i:=\bigcup_{j=1}^i \Omega_j\rightarrow
\mathbb{R}$ is such that $\|\varphi\|_{A_i}\leq l_i$, then there
is a continuous function $f\in \bigcup \mu_{i+1}$ such that
$\|\varphi-f\|_{A_i}<\frac{1}{i}$.

Moreover, the sequence $\{\Omega_{2i}: i\in \mathbb{N}\}$ is
strongly $\mathrm{Coz}_{\delta}$-disjoint.}

\medskip

{\it Proof of Claim 2.} By construction, $\{\Omega_i:i\in
\mathbb{N}\}$ is a sequence of pairwise disjoint finite subsets of
$X$, all families $\mu_i$ are finite, and the sequence
$\{\Omega_{2i}:i\in \mathbb{N}\}$ is strongly
$\mathrm{Coz}_{\delta}$-disjoint by the choice of
$\{\Delta_{n_k}:k\in \mathbb{N}\}$. The conditions (a) and (c) are
satisfied by $(a')$ and $(c')$, respectively. Since $F_i\subseteq
F_j$ for every $i\leq j$, we have $supp(U)\subseteq
\bigcup_{j=1}^i \Omega_j$ for every $U\in \mu_i$, and hence the
condition (b) holds true. The condition (d) is satisfied by the
definition of $\mu_i$ and the condition $(d')$ for the sets
$\gamma_i$. The claim is proved. \, \, \, \, \, \, \, \, \, $\Box$

\medskip

Since, by Claim 2, the sequence $\{\Omega_{2i}:i\in \mathbb{N}\}$
is strongly $\mathrm{Coz}_{\delta}$-disjoint, there is a
completely $\mathrm{Coz}_{\delta}$-additive system $\{W_x: x\in
\bigcup_{k\in \mathbb{N}} \Omega_{2k}\}$ of
$\mathrm{Coz}_{\delta}$-sets of $X$ such that $x\in W_x$ for all
$x\in \bigcup_{k\in \mathbb{N}} \Omega_{2k}$, and moreover, the
sets  $W(i):=\bigcup \{W_x: x\in \Omega_{2i}\}$ $(i\in \mathbb{N})$ are $\mathrm{Coz}_{\delta}$-sets in $X$.

\medskip

{\it Claim 3. The completely $\mathrm{Coz}_{\delta}$-additive
system $\{W_x: x\in \bigcup_{k\in \mathbb{N}} \Omega_{2k}\}$ can
be chosen such that the following conditions are satisfied:

(i) all sets $W_x$ are zero-sets in $X$;

(ii) if $i\in \mathbb{N}$ and $x\in \Omega_{2i}$, then $W_x\cap
\bigcup\{\Omega_j:j<2i\}=\emptyset$.

Consequently, for any countable set $Z\subseteq \bigcup_{k\in
\mathbb{N}} \Omega_{2k}$, the set $\bigcup_{z\in Z} W_z$ is
$\mathrm{Zer}_{\sigma}$ and $\mathrm{Coz}_{\delta}$ in $X$ (in
particular, all sets $W(i)$ are $\mathrm{Zer}_{\sigma}$ and
$\mathrm{Coz}_{\delta}$ in $X$).}

\medskip

{\it Proof of Claim 3.} Since $X$ is a Tychonoff space and the
sequence $\{\Omega_i: i\in\mathbb{N}\}$ is disjoint, for every
$i\in \mathbb{N}$ there is a continuous function $f_i:
X\rightarrow [0,1]$ such that $f_i(\bigcup \{\Omega_j:
j<2i\})=\{0\}$ and $f_i(\Omega_{2i})=\{1\}$. Then
$f^{-1}_i([0,\frac{1}{2}))$ is a
$\mathrm{Zer}_{\sigma}$-set in $X$ and $\bigcup \{\Omega_j:
j<2i\}\subseteq f^{-1}_i([0,\frac{1}{2}))$. Therefore
the set $W'(i):=W(i)\setminus
f^{-1}_i([0,\frac{1}{2}))$ is a
$\mathrm{Coz}_{\delta}$-set in $X$ such that $\Omega_{2i}\subseteq
W'(i)$ and $\bigcup \{\Omega_j: j<2i\}\cap W'(i)=\emptyset$. By
Lemma \ref{lem101}, the sequence $\{W'(i):i\in \mathbb{N}\}$ forms
a completely $\mathrm{Coz}_{\delta}$-additive system. Replacing
$W(i)$ by $W'(i)$ if needed, we can assume that the completely
$\mathrm{Coz}_{\delta}$-additive system $\{W(i): i\in
\mathbb{N}\}$ satisfies the following condition

(2) \, \, $W(i)\cap \bigcup \{\Omega_j: j<2i\}=\emptyset$ for
every $i\in \mathbb{N}$.

Now we fix an arbitrary $x\in \bigcup_{k\in \mathbb{N}}
\Omega_{2k}$. Then $W_x$ is a $\mathrm{Coz}_{\delta}$-set in $X$.
Therefore $X\setminus W_x=\bigcup_{i\in \mathbb{N}} Z_i\in
\mathrm{Zer}_{\sigma}(X)$, where all $Z_i$ are zero-sets in $X$.
Since $X$ is Tychonoff and $x\not\in Z_i$, by Theorem 1.5.13 of \protect\cite{Eng}, for every $i\in \mathbb{N}$ there is a continuous
function $q_i: X\rightarrow [0,1]$ such that $q_i(x)=0$ and
$q_i(Z_i)=\{1\}$. Then the set $B_x:=\bigcap_{i\in \mathbb{N}}
q_i^{-1}(0)$ is a zero-set in $X$ such that $x\in B_x\subseteq
W_x$. By Lemma \ref{lem101}, the family $\{B_x: x\in \bigcup_{k\in
\mathbb{N}} \Omega_{2k}\}$ forms a disjoint completely
$\mathrm{Coz}_{\delta}$-additive system of zero-sets in $X$.

Finally, the claim follows if to replace $W_x$ by $B_x$ if needed.
\, \, \, \, \, \, \, \,  $\Box$

\medskip

Let $S=\bigcup_{i\in \mathbb{N}} W(i)$. Then, by Claim
3, $F:=X\setminus S\in \mathrm{Coz}_{\delta}(X)$.

\medskip

{\it Claim 4. There are a sequence $\{f_i: i\in
\mathbb{N}\}\subset B_1(X)$, a strictly increasing sequence
$\{r_i: i\in \mathbb{N}\}\subseteq \mathbb{N}$ with $r_1=1$, a
sequence $\{b_i: i\in \mathbb{N}\}$, and a sequence $\{U_i: i\in
\mathbb{N}\}$ of basic open sets in $\mathbb{R}^X$ such that

(e) $U_i\in \mu_{2r_i}$ for every $i\in \mathbb{N}$;

(f) $\|f_i\|_X\leq l_{2r_i}$ for every $i\in \mathbb{N}$;

(g) $\|f_{i+1}-f_i\|_{C_i}\leq \frac{1}{2^{b_i}}$ for every $i\in
\mathbb{N}$ where $C_i:=F \cup \bigcup_{j=1}^{r_i} W(j)$;

(h) $f_j\in U_i$ for every $j\geq i\geq 1$;

(k) $b_{i+1}-b_{i}\rightarrow \infty$.}

\medskip

{\it Proof of Claim 4.} For $i=1$, take an arbitrary $U_1\in
\mu_2=\mu_{2r_1}$. Since $X$ is Tychonoff and $supp(U_1)$ is
finite (because $U_1$ is a basic open set), (c) implies that there
is a continuous function $\widetilde{f}_1: X\rightarrow
\mathbb{R}$ such that $\widetilde{f}_1\in U_1$ and
$\|\widetilde{f}_1\|_{X}<l_{2r_1}$; in particular,
$a_1:=\|\widetilde{f}_1\|_{supp(U_1)}<l_{2r_1}$.

Since $\{W_z:
z\in \bigcup_{k\in \mathbb{N}} \Omega_{2k}\}$ is pairwise disjoint
(see Claim 3), we can define a function $f_1:X\rightarrow
\mathbb{R}$ by

$$ f_1(x):=\left\{
\begin{array}{lcl}
\widetilde{f}_{1}(x), \, \, \, \, \, if \, \, \, \, x\in F;\\
\widetilde{f}_{1}(z), \, \, \, if \, \, \, x\in W_z \, \,  for \, some \, \, z\in \bigcup_{k\in \mathbb{N}} \Omega_{2k}. \\
\end{array}
\right.
$$

Since $\{W_z: z\in \bigcup_{k\in \mathbb{N}} \Omega_{2k}\}\cup \{F\}$ is a disjoint completely $Coz_{\delta}$-additive
system, $f_1$ is constant on
each $W_z$ and $f_1\upharpoonright F$ is continuous, $f_1$ is a
Baire-one function on $X$. Let us explain in more detail. Let $O$ be an open set in $\mathbb{R}$. Then

$f_1^{-1}(O)=\biggl(\widetilde{f}_1^{-1}(O)\cap F\biggl)\cup \biggl(\bigcup \{W_z: f_{1}^{-1}(O)\cap W_z\neq \emptyset \}\biggl)$.

Since $F$ and $\widetilde{f}_1^{-1}(O)$ are $Zer_{\sigma}$-sets, the set $\widetilde{f}_1^{-1}(O)\cap F\in Zer_{\sigma}$.
Since $\{W_z: z\in \bigcup_{k\in \mathbb{N}} \Omega_{2k}\}\cup \{F\}$ is a disjoint completely $Coz_{\delta}$-additive
system, the set $\bigcup \{W_z: f_{1}^{-1}(O)\cap W_z\neq \emptyset \}\in Zer_{\sigma}$. Thus, we get that $f_1^{-1}(O)\in Zer_{\sigma}$ and hence $f_1\in B_1(X)$.

\medskip

The inequality
$\|\widetilde{f}_1\|_{X}<l_{2r_1}$ implies that
$\|f_1\|_{X}<l_{2r_1}$. To check that $f_1\in U_1$, we recall
that, by (ii) of Claim 3, $\Omega_1\cap W_x=\emptyset$ for every
$x\in \bigcup_{k\in \mathbb{N}} \Omega_{2k}$, and hence
$\Omega_1\subseteq F$. Therefore $f_1(x)=\widetilde{f}_1(x)$ for
every $x\in \Omega_1\cup \Omega_2$. Since $U_1\in \mu_2$, the
condition (b) implies $supp(U_1)\subseteq \Omega_1\cup \Omega_2$.
Thus $f_1\in U_1$.  Recall that $a_1<l_{2r_1}$. Now it is
clear that for a sufficiently large $b_1$ (such that
$\frac{1}{2^{b_1}}<l_{2r_1}-a_1$), we obtain that the condition
$\|f-f_1\|_{F\cup \Omega_2}\leq \frac{1}{2^{b_1}}$ implies $f\in U_1$.

\medskip

Let $i=2$. Let us construct
$r_{2}$, $U_{2}$, $f_{2}$ and $b_{2}$, such that (e)-(k)
are satisfied as well. Choose $r_{2}\in\mathbb{N}$ such that
$r_{2}>r_1$ and $\frac{1}{2r_{2}-1}<\frac{1}{2^{b_1}}$.

Define $\varphi:=f_1\upharpoonright D$ where
$D:=\bigcup_{i=1}^{2r_{2}-1}\Omega_i$. Then $\varphi$ is a
function from $D$ to $\mathbb{R}$ such that $\|\varphi\|_D\leq
l_{2r_1}\leq l_{2r_{2}-1}$, and hence, by (d), there exists a
continuous function $\widetilde{f}_{2}\in \bigcup
\mu_{2r_{2}}$ such that  $\|\widetilde{f}_{2}\|\leq
l_{2r_{2}-1}$, $|\varphi(x)-\widetilde{f}_{2}(x)|<\frac{1}{2r_{2}-1}<\frac{1}{2^{b_1}}$
for each $x\in D$. Since $\widetilde{f}_{1}$ is continuous, we can assume that $|\widetilde{f}_{2}(x)-f_1(x)|=|\widetilde{f}_{2}(x)-\widetilde{f}_1(x)|<\frac{1}{2^{b_1}}$ for every $x\in F$.

Let $U_{2}\in \mu_{2r_{2}}$ be such that
$\widetilde{f}_{2}\in U_{2}$. Since $\{W_z: z\in \bigcup_{k\in
\mathbb{N}} \Omega_{2k}\}$ is pairwise disjoint (see Claim 3), one
can define a function $f_{2}:X\rightarrow \mathbb{R}$ by

$$ f_{2}(x):=\left\{
\begin{array}{lcl}
\widetilde{f}_{2}(x), \, \, \, \, \, if \, \, \, \, x\in F;\\
\widetilde{f}_{2}(z), \, \, \, \, \, if \, \, \, \, x\in W_z \, \,  for \, some \, \, z\in \bigcup_{k\in \mathbb{N}} \Omega_{2k}. \\
\end{array}
\right.
$$

Since $\{W_z: z\in \bigcup_{k\in \mathbb{N}} \Omega_{2k}\}\cup \{F\}$ is a disjoint completely $Coz_{\delta}$-additive
system, $f_2$ is constant on
each $W_z$ and $f_2\upharpoonright F$ is continuous, $f_2$ is a
Baire-one function on $X$.

  Since
$\|\widetilde{f}_{2}\|_X\leq l_{2r_{2}-1}$, it follows that
$\|f_{2}\|_{X}<l_{2r_{2}}$. Now we check that $f_{2}\in
U_{2}$. By (ii) of Claim 3, we have
$(\bigcup_{i=1}^{2r_{2}-1}\Omega_i)\cap W_x=\emptyset$ for every
$x\in \bigcup_{i\geq r_{2}}\Omega_{2i}$, and hence $\bigcup\limits_{i=1}^{2r_{2}-1}\Omega_i\subseteq F\cup \bigcup\limits_{i=1}^{r_{2}-1} \Omega_{2i}$.

Therefore $f_{2}(x)=\widetilde{f}_{2}(x)$ for every $x\in
\bigcup_{i=1}^{2r_{2}} \Omega_i$. Since $U_{2}\in
\mu_{2r_{2}}$, the condition (b) implies $supp(U_{2})\subseteq
\bigcup_{i=1}^{2r_{2}} \Omega_i$.
Thus $f_{2}\in U_{2}$.

Now we choose $b_{2}>b_1+1$ (that implies the condition (k) of
the claim) as follows. By the previous paragraph we have

$supp(U_{2})\subseteq
\bigcup\limits_{i=1}^{2r_{2}}\Omega_i\subseteq F\cup \bigcup
\{W_z: z\in \bigcup\limits_{i=1}^{r_{2}-1}\Omega_{2i}\}\cup
\bigcup\{W_x: x\in\Omega_{r_{2}}\}$.

Taking into account that $U_{2}$ is a basic open set,
$f_{2}\in U_{2}$ and $\|f_{2}\|_{X}<l_{2r_{2}}$, it is
clear that for a sufficiently small $\varepsilon>0$, it follows
that if $\|g-f_{2}\|_{C_2}<\varepsilon$, then $g\in U_{2}$.
Take $b_{2}>b_1+1$ such that
$\frac{1}{2^{b_{2}}}<\varepsilon$. Finally, since
$r_1<r_{2}-1$ it follows that $C_1\subseteq C_{2}$ and hence $\|f_{2}-f_1\|_{C_1}\leq \frac{1}{2^{b_1}}$.

\medskip


To check the step of induction, assume that we have found
$f_1,...,f_k\in B_1(X)$,  $1=r_1<...<r_k$ in $\mathbb{N}$,
$b_1<...<b_k$, and basic open sets $U_1$,...,$U_k$ in
$\mathbb{R}^X$ such that (e)-(k) are satisfied. Let us construct
$r_{k+1}$, $U_{k+1}$, $f_{k+1}$ and $b_{k+1}$, such that (e)-(k)
are satisfied as well. Choose $r_{k+1}\in\mathbb{N}$ such that
$r_{k+1}>r_k$ and $\frac{1}{2r_{k+1}-1}<\frac{1}{2^{b_k}}$.

Define $\varphi:=f_k\upharpoonright D$ where
$D:=\bigcup_{i=1}^{2r_{k+1}-1}\Omega_i$. Then $\varphi$ is a
function from $D$ to $\mathbb{R}$ such that $\|\varphi\|_D\leq
l_{2r_k}\leq l_{2r_{k+1}-1}$, and hence, by (d), there exists a
continuous function $\widetilde{f}_{k+1}\in \bigcup
\mu_{2r_{k+1}}$ such that  $\|\widetilde{f}_{k+1}\|\leq
l_{2r_{k+1}-1}$, $|\varphi(x)-\widetilde{f}_{k+1}(x)|<\frac{1}{2r_{k+1}-1}<\frac{1}{2^{b_k}}$
for each $x\in D$. Since $\widetilde{f}_{k}$ is continuous, we can assume that $|\widetilde{f}_{k+1}(x)-f_k(x)|=|\widetilde{f}_{k+1}(x)-\widetilde{f}_k(x)|<\frac{1}{2^{b_k}}$ for every $x\in F$.

Let $U_{k+1}\in \mu_{2r_{k+1}}$ be such that
$\widetilde{f}_{k+1}\in U_{k+1}$. Since $\{W_z: z\in \bigcup_{k\in
\mathbb{N}} \Omega_{2k}\}$ is pairwise disjoint (see Claim 3), one
can define a function $f_{k+1}:X\rightarrow \mathbb{R}$ by

$$ f_{k+1}(x):=\left\{
\begin{array}{lcl}
\widetilde{f}_{k+1}(x), \, \, \, \, \, if \, \, \, \, x\in F;\\
\widetilde{f}_{k+1}(z), \, \, \, \, \, if \, \, \, \, x\in W_z \, \,  for \, some \, \, z\in \bigcup_{k\in \mathbb{N}} \Omega_{2k}. \\
\end{array}
\right.
$$

Since $\{W_z: z\in \bigcup_{k\in \mathbb{N}} \Omega_{2k}\}\cup \{F\}$ is a disjoint completely $Coz_{\delta}$-additive
system, $f_{k+1}$ is constant on
each $W_z$ and $f_{k+1}\upharpoonright F$ is continuous, $f_{k+1}$ is a
Baire-one function on $X$. Let us explain in more detail. Let $O$ be an open set in $\mathbb{R}$. Then

$f_{k+1}^{-1}(O)=\biggl(\widetilde{f}_{k+1}^{-1}(O)\cap F\biggl)\cup \biggl(\bigcup \{W_z: f_{k+1}^{-1}(O)\cap W_z\neq \emptyset \}\biggl)$.

Since $F$ and $\widetilde{f}_{k+1}^{-1}(O)$ are $Zer_{\sigma}$-sets, the set $\widetilde{f}_{k+1}^{-1}(O)\cap F\in Zer_{\sigma}$.
Since $\{W_z: z\in \bigcup_{k\in \mathbb{N}} \Omega_{2k}\}\cup \{F\}$ is a disjoint completely $Coz_{\delta}$-additive
system, the set $\bigcup \{W_z: f_{k+1}^{-1}(O)\cap W_z\neq \emptyset \}\in Zer_{\sigma}$. Thus, we get that $f_{k+1}^{-1}(O)\in Zer_{\sigma}$ and hence $f_{k+1}\in B_1(X)$.

  Since
$\|\widetilde{f}_{k+1}\|\leq l_{2r_{k+1}-1}$, it follows that
$\|f_{k+1}\|_{X}<l_{2r_{k+1}}$. Now we check that $f_{k+1}\in
U_{k+1}$. By (ii) of Claim 3, we have
$(\bigcup_{i=1}^{2r_{k+1}-1}\Omega_i)\cap W_x=\emptyset$ for every
$x\in \bigcup_{i\geq r_{k+1}}\Omega_{2i}$, and hence $\bigcup\limits_{i=1}^{2r_{k+1}-1}\Omega_i\subseteq F\cup \bigcup\limits_{i=1}^{r_{k+1}-1} \Omega_{2i}$.

Therefore $f_{k+1}(x)=\widetilde{f}_{k+1}(x)$ for every $x\in
\bigcup_{i=1}^{2r_{k+1}} \Omega_i$. Since $U_{k+1}\in
\mu_{2r_{k+1}}$, the condition (b) implies $supp(U_{k+1})\subseteq
\bigcup_{i=1}^{2r_{k+1}} \Omega_i$. Thus $f_{k+1}\in U_{k+1}$.

Now we choose $b_{k+1}>b_k+k$ (that implies the condition (k) of
the claim) as follows. By the previous paragraph we have

$supp(U_{k+1})\subseteq
\bigcup\limits_{i=1}^{2r_{k+1}}\Omega_i\subseteq F\cup \bigcup
\{W_z: z\in \bigcup\limits_{i=1}^{r_{k+1}-1}\Omega_{2i}\}\cup
\bigcup\{W_x: x\in\Omega_{r_{k+1}}\}$.

Taking into account that $U_{k+1}$ is a basic open set,
$f_{k+1}\in U_{k+1}$ and $\|f_{k+1}\|_{X}<l_{2r_{k+1}}$, it is
clear that for a sufficiently small $\varepsilon>0$, it follows
that if $\|g-f_{k+1}\|_{C_k}<\varepsilon$, then $g\in U_{k+1}$.
Take $b_{k+1}>b_k+k$ such that
$\frac{1}{2^{b_{k+1}}}<\varepsilon$. Finally, since
$r_k<r_{k+1}-1$ it follows that $C_k=F\cup\bigcup_{j=1}^{r_k} W(j)\subseteq C_{k+1}$

and hence $\|f_{k+1}-f_k\|_{C_k}\leq \frac{1}{2^{b_k}}$. \, \, \,
\, \, \, \, \, \, \, \, \, \, \, \, \, \, \, \, \, \, \, \, \, \,
\, \, \, $\Box$

\medskip

It follows from Claim 4(g) that the sequence $\{f_i: i\in
\mathbb{N}\}\subseteq B_1(X)$ converges pointwise to some function
$f: X\rightarrow \mathbb{R}$.


To show that the function $f$ is Baire-one, let $V=(a,b)$ be an
open interval of $\mathbb{R}$.

Since all functions $\widetilde{f}_{i}$ are continuous, $\{W_z: z\in \bigcup_{k\in \mathbb{N}} \Omega_{2k}\}\cup\{F\}$ is a disjoint completely $Coz_{\delta}$-additive
system and $f_{i}$ is constant on
each $W_z$ and $C_k\in
\mathrm{Zer}_{\sigma}(X)$, the set

$f^{-1}(V)\cap C_k=\Bigl(\bigcup\limits_{n>\frac{2}{b-a}} \,
\bigcap\limits_{l>n+k}
f^{-1}_{l}([a+\frac{1}{n},b-\frac{1}{n}])\Bigr)\cap C_k=$

$=\Bigl( \Bigl(\bigcup\limits_{n>\frac{2}{b-a}} \, \bigcap\limits_{l>n+k}
\widetilde{f}^{-1}_{l}([a+\frac{1}{n},b-\frac{1}{n}])\Bigr)\cap F\Bigr)\bigcup \\ \bigcup \Bigl( \Bigl(\bigcup\limits_{n>\frac{2}{b-a}} \, \bigcap\limits_{l>n+k}
f^{-1}_{l}([a+\frac{1}{n},b-\frac{1}{n}])\Bigr)\cap\bigcup_{j=1}^{r_k} W(j)\Bigr)\in \mathrm{Zer}_{\sigma}(X)$.

 It follows that
$f^{-1}(V)=\bigcup\limits_{k\in \mathbb{N}} (f^{-1}(V)\cap
C_k)\in\mathrm{Zer}_{\sigma}(X)$ and, hence, $f\in B_1(X)$.

By Claim 4(h),  $f\in \overline{U_i}$ for each $i\in \mathbb{N}$.
But, then, by Claim 2(b), $f\not\in \bigcup\limits_{i=1}^{\infty}
F_i=B_1(X)$, it is a contradiction.

$(2)\Rightarrow(1)$. Let $\{\Delta_n: n\in \mathbb{N}\}$ satisfy
(2). For every $m\in \mathbb{N}$, define

\, \, $F_m:=\Bigl\{f\in B_1(X): \sup\limits_{n\in
\mathbb{N}}\Bigl(\min\{|f(x)|: x\in\Delta_n\}\Bigr)\leq m\Bigr\}$.

By (2) we have $B_1(X)=\bigcup_{m\in \mathbb{N}} F_m$. Therefore
it remains to prove that $F_m$ is nowhere dense in $B_1(X)$. First
we show that $F_m$ is closed in $B_1(X)$. Indeed, let $f\in
B_1(X)$ be such that $|f(z)|>m$ for some $n\in \mathbb{N}$ and
each $z\in \Delta_n$. Set $\varepsilon:=\frac{1}{2}(\min\{|f(x)|:
x\in \Delta_n\}-m)$. Then the standard neighborhood
$f+[\Delta_n;\varepsilon]:=\{g\in B_1(X): |f(x)-g(x)|<\varepsilon$
for every $x\in \Delta_n\}$ of $f$ does not intersect $F_m$. Thus
$F_m$ is closed.

To show that the closed set $F_m$ is nowhere dense in $B_1(X)$,
suppose for a contradiction that $F_m$ contains a standard
neighborhood $g+[A;\delta]$ of some $g\in F_m$, where $A\subset X$
is finite and $\delta>0$. Since the sequence $\{\Delta_n:
n\in\mathbb{N}\}$ is disjoint, there is $n_0\in \mathbb{N}$ such
that $\Delta_{n_0}\cap A=\emptyset$. As $X$ is Tychonoff, there is
$h\in C(X)$ such that $h\upharpoonright_A=g$ and
$h\upharpoonright_{\Delta_{n_0}}=2m$. It is clear that $h\in
g+[A;\delta]\subseteq F_m$, however

$\sup\limits_{n\in \mathbb{N}}\Bigl(\min\{|h(x)|: x\in
\Delta_n\}\Bigr)\geq \min\{|h(x)|: x\in \Delta_{n_0}\}=2m>m$

and hence $h\not\in F_m$, a contradiction. Thus $F_m$ is nowhere
dense in $B_1(X)$ and hence $B_1(X)$ is meager.

$(2)\Rightarrow(3)$.  Let $\Delta=\{\Delta_n : n\in\mathbb{N}\}$
be a sequence satisfying (2). Assume that $\{\Delta_{n_i}: i\in
\mathbb{N} \}$ is a strongly $Coz_{\delta}$-disjoint subsequence
of $\Delta$. Then, there is a pairwise disjoint collection $\{F_a:
a\in \bigcup \Delta_{n_i}\}$ of $Coz_{\delta}$ neighborhoods of
$a\in \bigcup \Delta_{n_i}$ such that $\bigcup\limits_{b\in B}
F_b\in Coz_{\delta}$ for each $B\subseteq \bigcup \Delta_{n_i}$.

Let $F=\bigcup \{F_a : a\in \bigcup \Delta_{n_i}\}$ and $S=\{a_j:
j\in\mathbb{N}\}=\bigcup \Delta_{n_i}$. Then, the function $f: F
\rightarrow \mathbb{R}$ such that $f(F_{a_i})=i$ for every $i\in
\mathbb{N}$  is constant on each $F_a$. By Lemma \ref{lem32},
there is a Baire-one function $g$ on $X$ extending $f$. Note that
$\sup\limits_{n}\Bigl(\min \{|g(x)|: x\in
\Delta_n\}\Bigr)=\infty$, a contradiction.

$(3)\Rightarrow(2)$. Suppose for a contradiction that
$\Delta=\{\Delta_n : n\in\mathbb{N}\}$ does not contain strongly
$Coz_{\delta}$-disjoint subsequence and there is $f\in B_1(X)$
such that $\sup\limits_{n\in \mathbb{N}}\Bigl(\min \{|f(x)|: x\in
\Delta_n\}\Bigr)=\infty$.

Consider $\{\Delta_{n_k}: k\in\mathbb{N}\}$ such that $m_k<\min
\{|f(x)|: x\in \Delta_{n_k}\}$ and $\max \{|f(x)|: x\in
\Delta_{n_k}\}<m_{k+1}$ for every $k\in \mathbb{N}$ where
$m_k\rightarrow \infty$ $(k\rightarrow \infty)$. Then
$(f(\Delta_{n_k}))_k$ is a discrete family of finite (and hence
closed) subset of $\mathbb{R}$. It follows that this family is
completely closed-additive (the union of any subfamily is closed).
Since $f$ is Baire-one, we deduce that the family
$(f^{-1}(f(\Delta_{n_k})))_k$ is completely
$\mathrm{Coz}_{\delta}$-additive. By Proposition \ref{pr34}, this
immediately shows that the sequence $(\Delta_{n_k})_k$ is strongly
$Coz_{\delta}$-disjoint, a contradiction.

$(4)\Rightarrow (3)$.  Let $\sigma$ be a winning strategy for ONE
player in the game $Game(X)$. A sequence $(B_0,...,B_n)$ of
disjoint finite subsets of $X$ is called {\it correct} if
$B_{2k+1}=\sigma(B_0,...,B_{2k})$, where $2k+1\leq n$. By
induction, we construct a disjoint sequence $\{\Delta_n : n\in
\mathbb{N}\}$ of finite subsets of $X$.

Let $\Delta_1=B_0$ be an arbitrary finite subset of $X$.
$\Delta_2=B_1=\sigma(B_0)$. Assume that $\Delta_0$, ...,
$\Delta_n$ are constructed then

$\Delta_{n+1}:=\bigcup\{\sigma(B_0,...,B_{2m})$:
$(B_0,...,B_{2m})$ is correct  and
$\bigcup\limits_{j=0}^{2m}B_j=\bigcup\limits_{i=1}^n \Delta_i$,
$m\leq n\}$.

  We claim that for any subsequence $\{\Delta_{n_k}: k\in \mathbb{N}\}$,
$1\leq n_1<n_2<...$ there is a sequence $\{D_k: k\in\mathbb{N}\}$
satisfying the conditions:

(a) $(D_0,...,D_k)$ is a correct sequence for each $k$;

(b) $D_{2p+1}\subseteq \Delta_{n_{p+1}}$ for any $p\geq 0$ and
$\bigcup\limits_{i=0}^{2m} D_i=\bigcup\limits_{i=1}^{n_{m+1}-1}
\Delta_i$ for any $m\geq 1$.

By induction, we construct a sequence $\{D_k: k\in \mathbb{N}\}$.
Let $D_0=\bigcup\limits_{i=1}^{n_1-1} \Delta_i$. Assume that
$D_0$, ..., $D_{2m}$ are constructed. Let's construct the sets
$D_{2m+1}$, $D_{2m+2}$. By condition (a), $(D_0,...,D_{2m})$ is
correct, and, by condition (b), $\bigcup\limits_{i=0}^{2m}
D_i=\bigcup\limits_{i=1}^{n_{m+1}-1} \Delta_i$. Then, by
constructing the sequence $\{\Delta_n: n\in\mathbb{N}\}$,
$\sigma(D_0,...,D_{2m})\subseteq \Delta_{n_{m+1}}$. Let
$D_{2m+1}=\sigma(D_0,...,D_{2m})$,
$D_{2m+2}=\bigcup\limits_{i=n_m+1}^{n_{m+2}-1} \Delta_i\setminus
D_{2m+1}$. It is not difficult to verify that the sequence
$(D_0,...,D_{2m+2})$ also satisfies conditions (a) and (b).

Since $\sigma$ is a winning strategy for ONE player in the game
$Game(X)$, the sequence $\{D_{2p+1}: p\in \mathbb{N}\}$ is not
strongly $Coz_{\delta}$-disjoint, hence, by condition (b),
$\{\Delta_{n_k}: k\in \mathbb{N}\}$ also is not strongly
$Coz_{\delta}$-disjoint. Since the subsequence $\{\Delta_{n_k}:
k\in \mathbb{N}\}$ was arbitrary, it follows that the sequence
$\{\Delta_{n}: n\in \mathbb{N}\}$ satisfies the condition (3).

$(3)\Rightarrow (4)$. Let $\{\Delta_{n}: n\in \mathbb{N}\}$ be a
sequence satisfying (3). For every finite set $P\subseteq X$, put
$n(P)=\min\{m: \Delta_{m}\cap P=\emptyset \}$. Then the strategy
of player $ONE$: $S_{2n+1}=\sigma(S_0,...,S_{2n})=\Delta_{n(L)}$
where $L=\bigcup\limits_{i=0}^{2n} S_i$, is winning.


\end{proof}

Since a non-meager space $B_1(X)$ is Baire, we have the following
result.

\medskip

\begin{theorem}\label{cor22} Let $X$ be a topological space. The following assertions are
equivalent:

1. $B_1(X)$ is Baire;

2. every pairwise disjoint sequence of non-empty finite subsets of
$X$ has a strongly $Coz_{\delta}$-disjoint subsequence;

3. ONE has no winning strategy in the game $G_{I}(B_1(X))$;

4. ONE has no winning strategy in the game $Game(X)$.
\end{theorem}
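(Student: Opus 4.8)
The plan is to derive Theorem \ref{cor22} almost entirely from the already-proven Theorem \ref{th22} by taking negations, together with the key observation (stated at the very start of Section 3 and justified via \protect\cite{ZMc}) that $B_1(X)$, being a homogeneous topological group, is Baire if and only if it is non-meager. The four conditions of Theorem \ref{cor22} are precisely the logical negations of conditions (1), (3), (4), and (5) of Theorem \ref{th22}, so the whole argument is a matter of negating an established equivalence and invoking the correct game-theoretic dualities.

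First I would establish the equivalence $(1)\Leftrightarrow(2)$: by the homogeneity remark, $B_1(X)$ is Baire iff $B_1(X)$ is non-meager, i.e. iff condition (1) of Theorem \ref{th22} fails. By that same theorem, the failure of its condition (1) is equivalent to the failure of its condition (3), and the negation of ``there is a pairwise disjoint sequence of finite subsets which does not contain a strongly $Coz_{\delta}$-disjoint subsequence'' is exactly the statement that \emph{every} pairwise disjoint sequence of non-empty finite subsets of $X$ has a strongly $Coz_{\delta}$-disjoint subsequence, which is condition (2) here. Next, for $(1)\Leftrightarrow(4)$ I would negate the equivalence $(1)\Leftrightarrow(4)$ of Theorem \ref{th22}: $B_1(X)$ non-meager iff ONE has \emph{no} winning strategy in $Game(X)$.

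For $(1)\Leftrightarrow(3)$ the argument is the most delicate, since it requires passing between the two games $G_{I}$ and $G_{II}$. By Oxtoby's Theorem \ref{thox}(2), $B_1(X)$ is Baire iff ONE has no winning strategy in $G_{I}(B_1(X))$, which is condition (3) directly; this is the cleanest link and uses Oxtoby rather than a negation of Theorem \ref{th22}. To tie everything together consistently, I would also note that by Oxtoby's Theorem \ref{thox}(1) and the equivalence $(1)\Leftrightarrow(5)$ of Theorem \ref{th22}, meagerness of $B_1(X)$ corresponds to ONE having a winning strategy in $G_{II}(B_1(X))$, and non-meagerness to its absence, so that all the negated game characterizations cohere.

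The main obstacle, though it is conceptual rather than technical, is making sure the correspondence between the \emph{Baire} property and the game $G_I$ (via Theorem \ref{thox}(2)) is not conflated with the correspondence between the \emph{meager} property and the game $G_{II}$ (via Theorem \ref{thox}(1)) that underlies Theorem \ref{th22}. The bridge that legitimizes replacing ``non-meager'' by ``Baire'' throughout is the homogeneity of $B_1(X)$, so I would state that reduction explicitly once at the outset and then let every equivalence follow by negating the corresponding clause of Theorem \ref{th22} or by direct appeal to Oxtoby. Since no new combinatorial construction is needed, the proof should be short: essentially one sentence invoking homogeneity followed by four citations of the earlier theorem's numbered equivalences under negation.
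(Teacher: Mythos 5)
Your proposal is correct and matches the paper's own (very terse) argument: the paper derives Theorem \ref{cor22} from Theorem \ref{th22} in one line, ``Since a non-meager space $B_1(X)$ is Baire, we have the following result,'' i.e.\ exactly your reduction via homogeneity of $B_1(X)$ plus negation of the clauses of Theorem \ref{th22}, with Oxtoby's Theorem \ref{thox}(2) supplying the $G_I$ characterization. No gap; your write-up is just a more explicit version of what the paper leaves implicit.
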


\medskip

 Note that if $Y\subset X$ and $A\in Coz_{\delta}(X)$ then $A\cap Y\in
Coz_{\delta}(Y)$.

\medskip

\begin{corollary}\label{cor55}{\it Let $B_1(X)$ be a Baire space and $Y\subseteq
X$. Then $B_1(Y)$ is Baire.}
\end{corollary}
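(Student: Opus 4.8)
The plan is to argue entirely through the combinatorial characterization of Baireness given by Theorem~\ref{cor22}, which reduces the problem to a statement about pairwise disjoint sequences of finite sets. By the equivalence $(1)\Leftrightarrow(2)$ of that theorem, $B_1(Y)$ is Baire as soon as every pairwise disjoint sequence $\{\Delta_n: n\in\mathbb{N}\}$ of non-empty finite subsets of $Y$ admits a subsequence that is strongly $Coz_{\delta}$-disjoint in $Y$; this is the assertion I would establish.

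First I would regard each $\Delta_n$ as a finite subset of the ambient space $X$, which is legitimate because $Y\subseteq X$ and each $\Delta_n$ is finite. Since $B_1(X)$ is Baire, Theorem~\ref{cor22} applied to $X$ yields a subsequence $\{\Delta_{n_k}: k\in\mathbb{N}\}$ which is strongly $Coz_{\delta}$-disjoint in $X$. By the definition of this notion, such a subsequence furnishes a pairwise disjoint family $\{F_a: a\in\bigcup_k \Delta_{n_k}\}$ of $Coz_{\delta}$ neighborhoods in $X$ (so that $a\in F_a\in Coz_{\delta}(X)$ for each $a$) which is completely $Coz_{\delta}$-additive, that is, $\bigcup_{b\in B} F_b\in Coz_{\delta}(X)$ for every $B\subseteq \bigcup_k \Delta_{n_k}$.

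The remaining step is to transport this witnessing family from $X$ to $Y$ by restriction. I would set $F'_a:=F_a\cap Y$ for each $a\in \bigcup_k\Delta_{n_k}\subseteq Y$. Using the observation recorded in Section~2 that $A\in Coz_{\delta}(X)$ implies $A\cap Y\in Coz_{\delta}(Y)$, each $F'_a$ is a $Coz_{\delta}$ neighborhood of $a$ in $Y$, and the family $\{F'_a\}$ is still pairwise disjoint. For the completely $Coz_{\delta}$-additive property relative to $Y$, I would note that for any $B\subseteq\bigcup_k\Delta_{n_k}$ one has $\bigcup_{b\in B} F'_b=\left(\bigcup_{b\in B} F_b\right)\cap Y$, which lies in $Coz_{\delta}(Y)$ precisely because $\bigcup_{b\in B} F_b\in Coz_{\delta}(X)$ and the trace operation preserves $Coz_{\delta}$-sets. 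Hence $\{\Delta_{n_k}: k\in\mathbb{N}\}$ is strongly $Coz_{\delta}$-disjoint in $Y$, which verifies condition (2) of Theorem~\ref{cor22} for $Y$ and so gives that $B_1(Y)$ is Baire.

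There is essentially no hard obstacle here: the entire content is the stability of the class $Coz_{\delta}$ under the trace operation $A\mapsto A\cap Y$, which commutes with the unions occurring in the definition of a completely $Coz_{\delta}$-additive system. The only point requiring care is that the combinatorial criterion of Theorem~\ref{cor22} is phrased in terms of finite subsets of the given space, so one must confirm that a subsequence which is good for $X$ remains good for the smaller space $Y$ — and this is exactly what the restriction argument accomplishes.
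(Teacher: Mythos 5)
Your proof is correct and is exactly the argument the paper intends: the corollary is stated without proof immediately after Theorem~\ref{cor22}, and the implicit justification is precisely your observation that the combinatorial criterion (2) passes from $X$ to $Y$ because $Coz_{\delta}$-sets (and hence completely $Coz_{\delta}$-additive families) are preserved under the trace operation $A\mapsto A\cap Y$, as noted in Section~2. Nothing is missing.
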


\medskip

It is well-known that there are Baire spaces $X$ and $Y$ such that
$X\times Y$ is not Baire \protect\cite{Ox1}. For the product
$\prod\limits_{\alpha\in A} B_1(X_{\alpha})$ we have the following
result.

\medskip

\begin{corollary} If $B_1(X_{\alpha})$ is Baire  for all $\alpha\in
A$, then $\prod\limits_{\alpha\in A} B_1(X_{\alpha})$ is Baire.
\end{corollary}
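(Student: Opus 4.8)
The plan is to realize the product as a Baire-one function space over a topological sum and then invoke the combinatorial criterion of Theorem~\ref{cor22}. Writing $X:=\bigsqcup_{\alpha\in A}X_\alpha$ for the disjoint sum, I would first establish the canonical identification $\prod_{\alpha\in A}B_1(X_\alpha)=B_1(X)$ as topological spaces. Since each $X_\alpha$ is clopen in $X$, a set $Z\subseteq X$ is a zero-set precisely when each $Z\cap X_\alpha$ is a zero-set in $X_\alpha$, and because a countable union commutes with the disjoint decomposition, a set $E\subseteq X$ lies in $Zer_\sigma(X)$ iff $E\cap X_\alpha\in Zer_\sigma(X_\alpha)$ for every $\alpha$ (and symmetrically for $Coz_\delta$). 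Using the criterion that $f$ is Baire-one iff $f^{-1}(U)\in Zer_\sigma$ for all open $U\subseteq\mathbb{R}$, this gives $f\in B_1(X)$ iff $f\upharpoonright X_\alpha\in B_1(X_\alpha)$ for each $\alpha$; moreover the topology of pointwise convergence on $B_1(X)\subseteq\mathbb{R}^X=\prod_\alpha\mathbb{R}^{X_\alpha}$ is exactly the product topology. Thus it suffices to prove that $B_1(X)$ is Baire.

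By Theorem~\ref{cor22} this reduces to showing that every pairwise disjoint sequence $\{\Delta_n:n\in\mathbb{N}\}$ of non-empty finite subsets of $X$ has a strongly $Coz_\delta$-disjoint subsequence, which I would produce by a diagonal argument. Since $\bigcup_n\Delta_n$ is countable it meets only countably many summands, which I re-enumerate as $X_1,X_2,\dots$. I would build a decreasing chain of infinite index sets $\mathbb{N}\supseteq N_1\supseteq N_2\supseteq\cdots$ as follows: given $N_{k-1}$, inspect the traces $\Delta_n\cap X_k$ for $n\in N_{k-1}$; if infinitely many are non-empty then, since $B_1(X_k)$ is Baire, Theorem~\ref{cor22} applied to $X_k$ yields an infinite $N_k\subseteq N_{k-1}$ along which these non-empty traces are strongly $Coz_\delta$-disjoint in $X_k$, while otherwise I let $N_k$ collect the infinitely many $n\in N_{k-1}$ with $\Delta_n\cap X_k=\emptyset$. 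Diagonalizing, I then choose $n_1<n_2<\cdots$ with $n_j\in N_j$.

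Finally I would assemble the witnessing neighborhood system. Fix a summand $X_k$. For $j\geq k$ one has $n_j\in N_j\subseteq N_k$, so the tail traces $\{\Delta_{n_j}\cap X_k:j\geq k\}$ form a strongly $Coz_\delta$-disjoint family in $X_k$, and hence, by Proposition~\ref{pr34}, their union is strongly $Coz_\delta$-disjoint; adjoining the finitely many points of $X_k$ coming from the head indices $j<k$, each equipped with a small cozero neighborhood inside the Tychonoff space $X_k$, keeps $S_k:=\bigl(\bigcup_j\Delta_{n_j}\bigr)\cap X_k$ strongly $Coz_\delta$-disjoint in $X_k$, since a finite union of $Coz_\delta$-sets is again $Coz_\delta$. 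This provides, for each $k$, a disjoint completely $Coz_\delta$-additive family $\{W_a:a\in S_k\}$ of $Coz_\delta$-neighborhoods in $X_k$. Declaring $W_a$ (contained in the clopen $X_k$, hence $Coz_\delta$ in $X$) the neighborhood of $a$ for every $a\in S:=\bigcup_j\Delta_{n_j}$, the resulting global family is pairwise disjoint, and for any $B\subseteq S$ the union $\bigcup_{b\in B}W_b=\bigsqcup_k\bigcup_{b\in B\cap S_k}W_b$ has $Coz_\delta$ trace on each summand, so it lies in $Coz_\delta(X)$ by the piecewise description from Step~1. Hence $S$ is strongly $Coz_\delta$-disjoint, and by Proposition~\ref{pr34} so is the subsequence $\{\Delta_{n_j}\}$, which completes the verification. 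The main obstacle I anticipate lies in this last step, namely checking complete $Coz_\delta$-additivity of the reassembled system across summands and absorbing the finite heads, together with pinning down the disjoint-sum description of $Zer_\sigma$ and $Coz_\delta$ sets underlying Step~1.
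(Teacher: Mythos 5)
Your proof is correct and, at its core, takes the same route as the paper: both identify $\prod_{\alpha\in A}B_1(X_\alpha)$ with $B_1(\bigoplus_{\alpha\in A}X_\alpha)$ (you via the $Zer_{\sigma}$-preimage criterion for Baire-one functions, the paper via pointwise limits of continuous functions) and then verify condition 2 of Theorem \ref{cor22} by a fusion argument over the countably many summands meeting $\bigcup_n\Delta_n$. The one structural difference lies in how the diagonalization is organized. The paper chooses adaptive finite blocks $Y_1,Y_2,\dots$ of summands so that each selected $\Delta_{n_i}$ is wholly contained in $Y_1\cup\dots\cup Y_i$; consequently no point of an early $\Delta_{n_i}$ ever lands in a later block and there is nothing to absorb. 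You process one summand at a time, which forces you to adjoin, for each $X_k$, the finitely many ``head'' points of $\Delta_{n_1},\dots,\Delta_{n_{k-1}}$ lying in $X_k$. This absorption is indeed valid, but your justification (``a small cozero neighbourhood'') is slightly too quick: an open cozero neighbourhood of a head point need not be disjoint from the tail system $\{W_t\}$. The repair is routine: recall that the paper's notion of a $Coz_{\delta}$ neighbourhood of a point only requires a $Coz_{\delta}$-set containing it, not a topological neighbourhood; so for a head point $p\notin\bigcup_t W_t$ take a zero-set containing $p$ inside the $Zer_{\sigma}$-set $X_k\setminus\bigcup_t W_t$, and for $p\in W_{t_0}$ intersect $W_{t_0}$ with disjoint cozero sets separating $p$ from $t_0$; pairwise disjointness and complete additivity of the augmented system then follow from Lemma \ref{lem101} and the closure of $Coz_{\delta}$ under finite unions. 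With that adjustment your argument is complete, and your final reassembly across summands via the clopen-sum description of cozero and $Coz_{\delta}$-sets, together with Proposition \ref{pr34}, is exactly right; what each approach buys is a trade-off between the paper's cleaner bookkeeping (no head points) and your more uniform per-summand induction.
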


\begin{proof} It is well-known that $\prod\limits_{\alpha\in A}
C_p(X_{\alpha})\cong C_p(\bigoplus\limits_{\alpha\in A} X_{\alpha})$
\protect\cite{arh}. We claim that $\prod\limits_{\alpha\in A}
B_1(X_{\alpha})\cong B_1(\bigoplus\limits_{\alpha\in A} X_{\alpha})$.
To this end, we define a mapping $f\rightarrow f^*$ where $f\in \prod\limits_{\alpha\in A}
B_1(X_{\alpha})$ and $f^*\in B_1(\bigoplus\limits_{\alpha\in A} X_{\alpha})$.

Let $f\in \prod\limits_{\alpha\in A}
B_1(X_{\alpha})$. Then $f=(f_{\alpha})$ where $f_{\alpha}\in B_1(X_{\alpha})$ for each $\alpha\in A$.

Define $f^*: \bigoplus\limits_{\alpha\in A} X_{\alpha} \rightarrow \mathbb{R}$ by letting $f^*\upharpoonright X_{\alpha}=f_{\alpha}$ for each $\alpha\in A$.

We claim that  $f^*$ is a Baire-one function. For each $\alpha\in A$, there is a sequence $\{f_{n,\alpha}: X_{\alpha}\rightarrow \mathbb{R}\}$ of continuous functions such that $f_{n,\alpha}\rightarrow f_{\alpha}$ pointwise. For each $n$, define $f_n: \bigoplus\limits_{\alpha\in A} X_{\alpha} \rightarrow \mathbb{R}$ such that $f_n\upharpoonright X_{\alpha}=f_{n,\alpha}$ for each $\alpha\in A$. It follows that $f_n\rightarrow f^*$ pointwise. Thus, $f^*\in B_1(\bigoplus\limits_{\alpha\in A} X_{\alpha})$.

\medskip
The mapping $f\rightarrow f^*$ is one-to-one mapping from  $\prod\limits_{\alpha\in A} B_1(X_{\alpha})$ onto $B_1(\bigoplus\limits_{\alpha\in A} X_{\alpha})$.
Furthermore, both the mapping and its inverse are continuous. Thus, the mapping is a homeomorphism.

\medskip

Let $B_1(X_{\alpha})$ be a Baire space  for all $\alpha\in A$.
We claim that $B_1(\bigoplus\limits_{\alpha\in A} X_{\alpha})$ is
Baire.

It is clear that $B_1(\bigoplus\limits_{\alpha\in A} X_{\alpha})$
is Baire for a finite set $A$. Suppose that $|A|\geq \aleph_0$.

Let $\gamma=\{\Delta_n: n\in\mathbb{N}\}\subseteq
\bigoplus\limits_{\alpha\in A} X_{\alpha}$ be a pairwise disjoint
sequence of non-empty finite subsets of
$\bigoplus\limits_{\alpha\in A} X_{\alpha}$. Let $A_1$ be a finite
subset of $A$ such that $\Delta_1\subseteq \bigoplus \{X_{\alpha}:
\alpha\in A_1\}=Y_1$. Consider $\gamma\cap Y_1=\{\Delta_n\cap Y_1:
n\in \mathbb{N}\}$. There is $\gamma_1\subseteq \gamma$ such that
$|\gamma_1|=\aleph_0$ and $\gamma_1\cap Y_1$ is strongly
$Coz_{\delta}$-disjoint in $Y_1$ and hence in
$\bigoplus\limits_{\alpha\in A} X_{\alpha}$. Suppose that
$\Delta_1\in \gamma_1$. Let $\Delta_{n_2}\in \gamma_1\setminus
\Delta_1$. Choose $A_2\subseteq A$, $|A_2|<\aleph_0$ such that
$\Delta_{n_2}\subseteq Y_1\cup Y_2$, where $Y_2=\bigoplus
\{X_{\alpha}: \alpha\in A_2\}$. Consider $\gamma_1\cap Y_2$. There
is $\gamma_2\subseteq \gamma_1$, $|\gamma_2|=\aleph_0$ and
$\gamma_2\cap Y_2$ is strongly $Coz_{\delta}$-disjoint in $Y_2$.
Choose $\Delta_{n_3}\in \gamma_2\setminus
\{\Delta_1,\Delta_{n_2}\}$ and so on. We get the strongly
$Coz_{\delta}$-disjoint sequence $\{\Delta_{n_i}: i\in
\mathbb{N}\}$ in $\bigoplus\limits_{\alpha\in A} X_{\alpha}$ which
satisfies our requirements.
\end{proof}

\begin{proposition} Let $X$ be a Hausdorff space and let $\{F_i: i\in \mathbb{N}\}$  be a pairwise disjoint family of $Coz_{\delta}$ subsets of $X$ such
that $\bigcup F_i$ is a $Coz_{\delta}$-subset of $X$.
Then the following assertions are equivalent:

1. $\{F_i: i\in \mathbb{N}\}$ forms a completely
$Coz_{\delta}$-additive system;

2. there is a pairwise disjoint family $\{D_i: i\in\mathbb{N}\}$
of $Zer_{\sigma}$ subsets of $X$ such that $F_i\subseteq D_i$ for
each $i\in \mathbb{N}$.

\end{proposition}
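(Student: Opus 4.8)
The plan is to prove the two implications separately, using the extension Lemma~\ref{lem32} for $(1)\Rightarrow(2)$ and a complementation argument for $(2)\Rightarrow(1)$.

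For $(1)\Rightarrow(2)$, I would reuse the construction already employed in Lemma~\ref{lem101} and Proposition~\ref{pr34}. Assuming $\{F_i\}$ is a disjoint completely $\mathrm{Coz}_{\delta}$-additive system, I define the index function $f:\bigcup_i F_i\rightarrow\mathbb{R}$ by $f(F_i)=i$. Since $f$ is constant on each $F_i$, Lemma~\ref{lem32} supplies a Baire-one extension $\widetilde{f}\in B_1(X)$. Setting $D_i:=\widetilde{f}^{\,-1}((i-\frac{1}{2},i+\frac{1}{2}))$, the sets $D_i$ are pairwise disjoint because the intervals $(i-\frac{1}{2},i+\frac{1}{2})$ are disjoint; they satisfy $F_i\subseteq D_i$ because $\widetilde{f}\equiv i$ on $F_i$; and each $D_i\in\mathrm{Zer}_{\sigma}(X)$ because $\widetilde{f}$ is Baire-one and the interval is open. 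This yields the required family.

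For $(2)\Rightarrow(1)$, I would fix an arbitrary $B\subseteq\mathbb{N}$, put $C:=\mathbb{N}\setminus B$, and aim to show $\bigcup_{i\in B}F_i\in\mathrm{Coz}_{\delta}(X)$. The key step is the set identity $\bigcup_{i\in B}F_i=\bigl(\bigcup_{j\in\mathbb{N}}F_j\bigr)\setminus\bigl(\bigcup_{i\in C}D_i\bigr)$, which holds precisely because the $D_i$ are pairwise disjoint and $F_i\subseteq D_i$: a point of $F_i$ with $i\in B$ lies in $\bigcup_j F_j$ but, since $D_k\cap D_i=\emptyset$ for $k\in C$, in no such $D_k$; conversely, a point of $\bigcup_j F_j$ avoiding every $D_k$ with $k\in C$ cannot lie in any $F_j$ with $j\in C$, as $F_j\subseteq D_j$ would force membership in $D_j$. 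Now $\bigcup_j F_j\in\mathrm{Coz}_{\delta}(X)$ by hypothesis, while $\bigcup_{i\in C}D_i\in\mathrm{Zer}_{\sigma}(X)$ as a countable union of $\mathrm{Zer}_{\sigma}$-sets; hence its complement $X\setminus\bigcup_{i\in C}D_i$ is a $\mathrm{Coz}_{\delta}$-set (the complement of a $\mathrm{Zer}_{\sigma}$-set is, by De Morgan, a countable intersection of cozero-sets). Since $\mathrm{Coz}_{\delta}$-sets are stable under finite intersections, the identity exhibits $\bigcup_{i\in B}F_i$ as an intersection of two $\mathrm{Coz}_{\delta}$-sets, hence as a $\mathrm{Coz}_{\delta}$-set. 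As $B$ was arbitrary, $\{F_i\}$ is completely $\mathrm{Coz}_{\delta}$-additive.

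The step I expect to be the main obstacle is the $(2)\Rightarrow(1)$ direction, and specifically the need to avoid reasoning directly about $\mathrm{Coz}_{\delta}\cap\mathrm{Zer}_{\sigma}$, which is not in general a $\mathrm{Coz}_{\delta}$-set. The device that resolves this is to write the union over $B$ as a \emph{difference} rather than an intersection, so that the $\mathrm{Zer}_{\sigma}$-sets $D_i$ enter only through their complement; disjointness of the $D_i$ together with $F_i\subseteq D_i$ is exactly what makes this difference equal to $\bigcup_{i\in B}F_i$. Everything else is bookkeeping with the stability properties of $\mathrm{Coz}_{\delta}$- and $\mathrm{Zer}_{\sigma}$-sets recorded at the start of Section~2, and the single appeal to Lemma~\ref{lem32} in the forward direction.
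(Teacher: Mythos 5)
Your proposal is correct and follows essentially the same route as the paper: the forward direction extends the index function via Lemma~\ref{lem32} and takes preimages of the intervals $(i-\frac{1}{2},i+\frac{1}{2})$, and the converse writes $\bigcup_{i\in B}F_i$ as $\bigl(\bigcup_j F_j\bigr)\setminus\bigl(\bigcup_{i\in C}D_i\bigr)$, exactly as in the paper. Your justification of the set identity and of the closure properties is merely more explicit than the paper's.
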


\begin{proof} $(1)\Rightarrow(2)$. By Lemma \ref{lem32}, there is $f\in B_1(X)$ such that $f(F_i)=i$ for each
$i\in \mathbb{N}$. Let $D_i=f^{-1}(i-\frac{1}{2}, i+\frac{1}{2})$
for each $i\in \mathbb{N}$. Then $\{D_i: i\in\mathbb{N}\}$
satisfies our requirements.

$(2)\Rightarrow(1)$. Let $\{i_k: k\in \mathbb{N}\}\subseteq
\mathbb{N}$. Then $\bigcup \{F_{i_k}: k\in \mathbb{N}\}=\bigcup
F_i\setminus (\bigcup \{D_j: j\neq i_k, j,k\in\mathbb{N}\})$.
Since $\bigcup F_i\in Coz_{\delta}(X)$ and $\bigcup \{D_j: j\neq
i_k, j,k\in\mathbb{N}\}\in Zer_{\sigma}(X)$, the set $\bigcup
\{F_{i_k}: k\in \mathbb{N}\}\in Coz_{\delta}(X)$.
\end{proof}

Recall that the {\it pseudocharacter}  $\psi(X)$ of $X$ is the
smallest cardinal $\kappa$ such that every singleton
$\{x\}\subseteq X$ is the intersection of $\leq\kappa$ open sets
in $X$.

\medskip
\begin{lemma}\label{lem1} Let $X$ be a space of countable pseudocharacter and let
$A=\{a_i: i\in\mathbb{N}\}$ be a strongly $Coz_{\delta}$-disjoint
countable subset of $X$. Then $\{a_i: i\in\mathbb{N}\}$ forms a
completely $Coz_{\delta}$-additive system.
\end{lemma}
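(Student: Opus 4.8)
The plan is to reduce the statement to a single application of Lemma \ref{lem101}, the only genuine input being the observation that under the standing hypotheses every singleton is a $Coz_{\delta}$-set. Since $A=\{a_i:i\in\mathbb{N}\}$ is strongly $Coz_{\delta}$-disjoint, by definition there is a pairwise disjoint collection $\{F_{a_i}:i\in\mathbb{N}\}$ of $Coz_{\delta}$ neighbourhoods of the points $a_i$ which forms a completely $Coz_{\delta}$-additive system. In other words, $\{F_{a_i}:i\in\mathbb{N}\}$ is a disjoint completely $Coz_{\delta}$-additive system of $Coz_{\delta}$-subsets of $X$, which is exactly the hypothesis needed to feed into Lemma \ref{lem101} with $F_i:=F_{a_i}$.

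Before invoking that lemma I would verify that each singleton $\{a_i\}$ is a $Coz_{\delta}$-subset of $X$ contained in $F_{a_i}$. The containment $\{a_i\}\subseteq F_{a_i}$ is immediate because $F_{a_i}$ is a neighbourhood of $a_i$. To see that $\{a_i\}\in Coz_{\delta}(X)$ I would use both standing assumptions, namely countable pseudocharacter and the Tychonoff property. Countable pseudocharacter yields open sets $O_n\ni a_i$ with $\{a_i\}=\bigcap_n O_n$; since $X$ is Tychonoff, for each $n$ there is a continuous function $g_n:X\to[0,1]$ with $g_n(a_i)=0$ and $g_n=1$ on $X\setminus O_n$, so that the cozero set $V_n:=\{x\in X:g_n(x)<1\}$ satisfies $a_i\in V_n\subseteq O_n$. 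Hence $\{a_i\}=\bigcap_n V_n$ is a countable intersection of cozero sets, i.e. a $Coz_{\delta}$-set.

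With this in hand I would apply Lemma \ref{lem101} to the family $\{L_i:=\{a_i\}:i\in\mathbb{N}\}$ of $Coz_{\delta}$-subsets with $L_i\subseteq F_i=F_{a_i}$. The lemma then gives that $\{\{a_i\}:i\in\mathbb{N}\}$ is a completely $Coz_{\delta}$-additive system, which is precisely the assertion that $\{a_i:i\in\mathbb{N}\}$ forms a completely $Coz_{\delta}$-additive system, since every $B\subseteq A$ satisfies $B=\bigcup_{a\in B}\{a\}\in Coz_{\delta}(X)$. I do not expect any serious obstacle: the real content is that countable pseudocharacter together with the Tychonoff axiom forces singletons to be $Coz_{\delta}$, after which Lemma \ref{lem101} automatically propagates complete $Coz_{\delta}$-additivity from the enveloping neighbourhoods $F_{a_i}$ down to the singletons. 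The only mildly technical step is the cozero refinement of the pseudocharacter witnesses in the second paragraph, which is the standard Tychonoff argument.
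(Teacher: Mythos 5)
Your proof is correct and follows exactly the same route as the paper: observe that countable pseudocharacter (together with the standing Tychonoff assumption) makes each singleton $\{a_i\}$ a $Coz_{\delta}$-set, and then apply Lemma \ref{lem101} with $L_i=\{a_i\}$ and $F_i=F_{a_i}$. The paper states the singleton step without proof; your explicit cozero refinement of the pseudocharacter witnesses is just a fleshed-out version of the same observation.
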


\begin{proof}   Since $X$ is a space
of countable pseudocharacter, $\{x\}\in Coz_{\delta}(X)$ for every
$x\in X$. Then, by Lemma \ref{lem101}, $\{a_i: i\in\mathbb{N}\}$
forms a completely $Coz_{\delta}$-additive system.
\end{proof}

Note that any Lindel\"{o}f $G_{\delta}$-set is $Coz_{\delta}$
(Proposition 4 (b) in \protect\cite{KS}). Then, by Lemma
\ref{lem1} and Theorem \ref{cor22}, we have the following result.

\medskip

\begin{theorem}\label{th20} Let $X$ be a space of countable pseudocharacter. A space $B_1(X)$ is Baire
if, and only if, every pairwise disjoint sequence $\{\Delta_i:
i\in \mathbb{N}\}$ of non-empty finite subsets of $X$ has a
subsequence $\{\Delta_{i_k}: k\in \mathbb{N}\}$ such that $\bigcup
\{\Delta_{i_k}: k\in\mathbb{N}\}$ is $G_{\delta}$.
\end{theorem}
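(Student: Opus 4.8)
The plan is to deduce Theorem~\ref{th20} directly from the general characterization in Theorem~\ref{cor22} together with Lemma~\ref{lem1}, using the observation (Proposition 4(b) in \cite{KS}) that in a space of countable pseudocharacter every Lindel\"of $G_\delta$-set is $\mathrm{Coz}_\delta$. By Theorem~\ref{cor22}, $B_1(X)$ is Baire if and only if every pairwise disjoint sequence $\{\Delta_i: i\in\mathbb{N}\}$ of non-empty finite subsets of $X$ has a strongly $\mathrm{Coz}_\delta$-disjoint subsequence. So the entire task reduces to showing, under the countable pseudocharacter hypothesis, that for a pairwise disjoint sequence of finite sets the existence of a strongly $\mathrm{Coz}_\delta$-disjoint subsequence is equivalent to the existence of a subsequence whose union is $G_\delta$.

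First I would prove the forward direction of that equivalence. Suppose $\{\Delta_{i_k}: k\in\mathbb{N}\}$ is strongly $\mathrm{Coz}_\delta$-disjoint. By Proposition~\ref{pr34} the union $S:=\bigcup\{\Delta_{i_k}: k\in\mathbb{N}\}$ is a strongly $\mathrm{Coz}_\delta$-disjoint (countable) set, so by Lemma~\ref{lem1} the singletons $\{a: a\in S\}$ form a completely $\mathrm{Coz}_\delta$-additive system. Taking $B=S$ itself shows $S=\bigcup\{\{a\}: a\in S\}\in\mathrm{Coz}_\delta(X)$, and since every $\mathrm{Coz}_\delta$-set is a countable intersection of cozero (hence open) sets, $S$ is $G_\delta$ in $X$. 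This gives a subsequence with $G_\delta$ union.

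For the converse I would argue as follows. Suppose $\{\Delta_{i_k}: k\in\mathbb{N}\}$ is a subsequence with $S:=\bigcup\{\Delta_{i_k}: k\in\mathbb{N}\}$ a $G_\delta$-set. The key point is that $S$ is countable, hence Lindel\"of, so the cited Proposition~4(b) of \cite{KS} applies and $S\in\mathrm{Coz}_\delta(X)$. Now each singleton $\{a\}$ with $a\in S$ is $\mathrm{Coz}_\delta$ (by countable pseudocharacter), and these singletons are pairwise disjoint $\mathrm{Coz}_\delta$-subsets of the $\mathrm{Coz}_\delta$-set $S$; I would then invoke Lemma~\ref{lem101} (with the $F_i$ taken to be the singletons, which trivially form a disjoint completely $\mathrm{Coz}_\delta$-additive system inside $S$ once $S$ is known $\mathrm{Coz}_\delta$) to conclude that the family of singletons is completely $\mathrm{Coz}_\delta$-additive. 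By Proposition~\ref{pr34} this makes $\{\Delta_{i_k}: k\in\mathbb{N}\}$ strongly $\mathrm{Coz}_\delta$-disjoint, as required.

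The main obstacle I anticipate is the correct bookkeeping in the converse direction: one must be careful that the hypothesis "$S$ is $G_\delta$" together with countability genuinely yields $S\in\mathrm{Coz}_\delta(X)$, since in a general Tychonoff space a $G_\delta$-set need not be $\mathrm{Coz}_\delta$ without the Lindel\"of/pseudocharacter leverage. Once that single implication is secured, the rest is a routine assembly of Lemma~\ref{lem101}, Lemma~\ref{lem1}, Proposition~\ref{pr34}, and Theorem~\ref{cor22}. The statement and its proof are short precisely because all the substantive work has already been done in the preceding lemmas; the theorem is essentially a clean specialization of the general criterion to the countable-pseudocharacter setting, where the abstract $\mathrm{Coz}_\delta$-additivity condition collapses to the familiar topological condition of having a $G_\delta$ union.
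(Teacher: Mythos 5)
Your proposal is correct and follows essentially the same route as the paper, which likewise derives the theorem from Theorem~\ref{cor22} and Lemma~\ref{lem1} together with the observation that a Lindel\"of (here, countable) $G_\delta$-set is $\mathrm{Coz}_\delta$. One small remark: in the converse direction Lemma~\ref{lem101} is not really the right tool (it presupposes an ambient completely $\mathrm{Coz}_\delta$-additive system), but this is harmless since, as you indicate, once $S$ is known to be $\mathrm{Coz}_\delta$ every subset $B\subseteq S$ is a countable Lindel\"of $G_\delta$ (being $S$ minus a $\mathrm{Zer}_\sigma$-set of singletons) and hence $\mathrm{Coz}_\delta$, so the singletons do form a completely $\mathrm{Coz}_\delta$-additive system directly.
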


\medskip

A set $A$ is called {\it locally non-$G_{\delta}$} if no nonempty
relatively open subset of $A$ is $G_{\delta}$.

\medskip
\begin{proposition}\label{th23} Let $B_1(X)$ be a Baire space for a metrizable space $X$. Then for any countable locally non-$G_{\delta}$ subset
$A\subseteq X$ there is a disjoint family $\{B_i: i\in
\mathbb{N}\}$ of countable subsets of $A$ such that $A=\bigcup
B_i$, $\overline{B_i}=\overline{A}$ and $B_i$ is a
$G_{\delta}$-set in $X$ for all $i\in \mathbb{N}$.
\end{proposition}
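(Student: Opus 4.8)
The plan is to extract from the Baireness of $B_1(X)$ a single countable \emph{hereditarily $G_\delta$} set $M\subseteq A$ that is dense in $\overline{A}$, and then to split it. Here I call a set $B\subseteq A$ hereditarily $G_\delta$ if every $T\subseteq B$ is $G_\delta$ in $X$; note that this property passes to subsets and to finite unions (if $T\subseteq B\cup C$ then $T=(T\cap B)\cup(T\cap C)$ is a union of two $G_\delta$-sets), and every finite set is hereditarily $G_\delta$. First I record the consequences of the hypotheses: since $X$ is metrizable, open sets are cozero and $\mathrm{Coz}_\delta=G_\delta$, and $X$ has countable pseudocharacter. Since $A$ is locally non-$G_\delta$, every nonempty relatively open $V\subseteq A$ fails to be $G_\delta$, hence is infinite (finite subsets of a metric space are $G_\delta$); and $\overline{A}$ has no isolated point, for an isolated point of $\overline{A}$ lying in $A$ would give a relatively open singleton of $A$, while one lying in $\overline{A}\setminus A$ would contradict that it is a limit of $A$. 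Thus $\overline{A}$ is a crowded separable metrizable space.

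The key step is to build the dense hereditarily $G_\delta$ master set $M$, and this is where I expect the main difficulty. Tallness alone (every infinite subset of $A$ contains an infinite hereditarily $G_\delta$ set, which follows from Theorem~\ref{cor22} and Lemma~\ref{lem101}) is not enough, because combining one good subset from each basic open $V_m$ yields only a \emph{countable union} of good sets, which need not be good. The fix is a staircase choice that makes density survive the passage to a subsequence. Fix a countable base $\{V_m:m\in\mathbb{N}\}$ of nonempty relatively open subsets of $A$, and choose pairwise disjoint finite sets $\Delta_n\subseteq A$ with $\Delta_n\cap V_i\neq\emptyset$ for every $i\le n$; this is possible because each $V_i$ is infinite, so fresh points are always available. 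Since $B_1(X)$ is Baire, Theorem~\ref{cor22} gives a strongly $\mathrm{Coz}_\delta$-disjoint subsequence $\{\Delta_{n_k}:k\in\mathbb{N}\}$. Put $M:=\bigcup_k\Delta_{n_k}$. For each fixed $m$, once $n_k\ge m$ we have $\Delta_{n_k}\cap V_m\neq\emptyset$, so $M$ meets every $V_m$ and is therefore dense in $\overline{A}$.

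It remains to see that $M$ is hereditarily $G_\delta$. By Proposition~\ref{pr34}, strong $\mathrm{Coz}_\delta$-disjointness of the family $(\Delta_{n_k})$ means that the \emph{set} $M$ is strongly $\mathrm{Coz}_\delta$-disjoint, i.e.\ there is a pairwise disjoint, completely $\mathrm{Coz}_\delta$-additive family $\{F_a:a\in M\}$ of $\mathrm{Coz}_\delta$ neighborhoods with $a\in F_a$. Since $\{a\}\in\mathrm{Coz}_\delta(X)=G_\delta$ and $\{a\}\subseteq F_a$, Lemma~\ref{lem101} shows that $\{\{a\}:a\in M\}$ is itself completely $\mathrm{Coz}_\delta$-additive; hence for every $T\subseteq M$ the union $\bigcup_{a\in T}\{a\}=T$ lies in $\mathrm{Coz}_\delta(X)=G_\delta$. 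So $M$ is hereditarily $G_\delta$, as required.

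Finally I assemble the partition. Because $\overline{M}=\overline{A}$ is crowded, separable and metrizable and $M$ is a countable dense subset, a routine back-and-forth distribution of the points of $M$ along the base $\{V_m\}$ splits $M$ into pairwise disjoint dense subsets $M=\bigsqcup_{i\in\mathbb{N}}M_i$, each $M_i$ dense in $\overline{A}$. To cover the remainder, distribute $A\setminus M$ injectively into the index set, putting at most one leftover point into each class; call the resulting finite pieces $C_i$, so $A\setminus M=\bigsqcup_i C_i$. Set $B_i:=M_i\cup C_i$. Then the $B_i$ are pairwise disjoint with $\bigcup_i B_i=M\cup(A\setminus M)=A$; each $B_i$ contains the dense set $M_i$, so $\overline{B_i}=\overline{A}$; and each $B_i$ is a finite union of the hereditarily $G_\delta$ set $M_i\subseteq M$ with the finite (hence hereditarily $G_\delta$) set $C_i$, so $B_i$ is itself $G_\delta$ in $X$. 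This gives the desired family, completing the proof.
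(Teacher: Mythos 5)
Your proof is correct. The engine is the same as the paper's: choose a pairwise disjoint sequence of finite subsets of $A$ that is ``spread out'' densely, apply Theorem~\ref{cor22} to extract a strongly $\mathrm{Coz}_{\delta}$-disjoint subsequence, and use Lemma~\ref{lem101}/Lemma~\ref{lem1} (singletons are $\mathrm{Coz}_{\delta}$ in a metrizable space) to turn strong $\mathrm{Coz}_{\delta}$-disjointness into $G_{\delta}$-ness; your staircase over a countable base of $A$ plays exactly the role of the paper's pairwise disjoint finite $\frac{1}{2^{n-1}}$-nets for a totally bounded metric on $\overline{A}$. Where you genuinely diverge is in the assembly. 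The paper produces $B_1$ as the union of the extracted subsequence, then iterates inside $A\setminus B_1$ (arguing that it is still dense in $\overline{A}$ and non-$G_{\delta}$) to get $B_2, B_3,\dots$, invoking Theorem~\ref{cor22} at every stage; it is then somewhat implicit that the pieces exhaust $A$. You instead make explicit that the single extracted set $M$ is \emph{hereditarily} $G_{\delta}$, split $M$ into countably many dense pieces by a routine diagonal distribution over the base, and absorb the leftover points of $A\setminus M$ one per piece (each leftover singleton being $G_{\delta}$ in a metric space, and a union of two $G_{\delta}$-sets being $G_{\delta}$). This buys a cleaner verification of $A=\bigcup B_i$ and needs only one application of Theorem~\ref{cor22}, at the cost of isolating the hereditary-$G_{\delta}$ observation that the paper leaves implicit in its repeated use of Lemma~\ref{lem1}.
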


\begin{proof}
Since $\overline{A}$ is a separable metrizable space, by Theorem
4.3.5 from \protect\cite{Eng}, $\overline{A}$ is metrizable by a
totally bounded metric $\rho$. For each $\epsilon=\frac{1}{2^{n}}$
there exists a finite set $F_{n}$ of $\overline{A}$ which is
$\epsilon$-dense in $(\overline{A},\rho)$. It follows that for
each $n\in \mathbb{N}$ there exists a finite set $A_n\subset A$
which is $\frac{1}{2^{n-1}}$-dense in $(\overline{A},\rho)$. Since
$A$ is locally non-$G_{\delta}$, then we can assume that $A_i\cap
A_j=\emptyset$ for $i\neq j$, $i,j\in \mathbb{N}$.

 By Corollary \ref{cor22}, there exists a subsequence $\{A_{n_k}:
k\in\mathbb{N}\}\subseteq \{A_n: n\in \mathbb{N}\}$ such that
$\{A_{n_k}: k\in\mathbb{N}\}$ is strongly $Coz_{\delta}$-disjoint.
Let $B_1=\bigcup\{A_{n_k}: k\in \mathbb{N}\}$. Then
$\overline{B_1}=\overline{A}$ and, by Lemma \ref{lem1}, $B_1$ is a
$G_{\delta}$-set in $X$. Consider $A\setminus B_1$. Since $A$ is
not-$G_{\delta}$ then $A\setminus B_1=\{A_j: j\neq n_k, j,k\in
\mathbb{N}\}$ is not $G_{\delta}$, too. Since $A$ is locally
not-$G_{\delta}$ then $\overline{A\setminus B_1}=\overline{A}$.
Analogously, choose $B_2\subset (A\setminus B_1)$ such that
$B_2=\{A_{j_s}: s\in \mathbb{N}\}$ is a $G_{\delta}$-set in $X $
and $\overline{B_2}=\overline{A}$. By induction, choose
$B_j\subset(A\setminus \bigcup\{B_i: i=1,..,j-1\})$ such that
$B_j$ is a $G_{\delta}$-set in $X$ and
$\overline{B_j}=\overline{A}$. Note that the induction is an
infinite because $A$ is not $G_{\delta}$. We get the family
$\{B_i: i\in \mathbb{N}\}$ which satisfies our requirements. This
conclude the proof.
\end{proof}

\begin{theorem} If $X$ is metrizable and $B_1(X)$ is a Baire
space, then each separable subset of $X$ without isolated points
is meager (of first category) in itself.
\end{theorem}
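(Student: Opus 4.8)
The plan is to reduce everything to a local statement: it suffices to show that every nonempty open subset $W$ of $S$ contains a nonempty relatively open subset that is meager in $S$. Once this is available, the global conclusion is routine. Since $S$ is separable metrizable it has a countable base $\{W_n:n\in\mathbb{N}\}$ of nonempty open sets; choosing for each $n$ a nonempty open $O_n\subseteq W_n$ which is meager in $S$, the union $V:=\bigcup_n O_n$ is open, meager (a countable union of meager sets), and dense (it meets every $W_n$). Then $S\setminus V$ is closed with empty interior, hence nowhere dense, so $S=V\cup(S\setminus V)$ is a union of two meager sets and therefore meager in itself.

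First I would fix a countable dense set $A\subseteq S$. Since $S$ has no isolated points and $A$ is dense, $A$ is dense-in-itself, and for every nonempty open $W\subseteq S$ the trace $A_W:=A\cap W$ is a countable, dense-in-itself subset of $X$ which is dense in $W$ (note $W$, being open in $S$, also has no isolated points). The core of the argument is the elementary observation that if $O$ is a nonempty open subset of $S$ possessing a countable subset $B$ that is dense in $O$ and $G_\delta$ in $O$, then $O$ is meager in $S$: writing $B=\bigcap_k G_k$ with $G_k$ open, and hence (as $B$ is dense) dense open, in $O$, the set $O\setminus B=\bigcup_k(O\setminus G_k)$ is meager, while $B$ is meager because each singleton is nowhere dense in $O$; since $O$ is open in $S$, nowhere dense in $O$ coincides with nowhere dense in $S$, so both pieces are meager in $S$ and thus so is $O$. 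It therefore suffices, for each nonempty open $W$, to find a nonempty open $O\subseteq W$ carrying such a set $B$.

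To produce $B$ I would split into two cases according to whether $A_W$ is locally non-$G_\delta$ in $X$. If $A_W$ \emph{is} locally non-$G_\delta$, then since $X$ is metrizable and $B_1(X)$ is Baire, Proposition \ref{th23} applies to $A_W$ and yields a decomposition $A_W=\bigcup_i B_i$ with each $B_i$ countable, $G_\delta$ in $X$, and $\overline{B_i}=\overline{A_W}$; taking $O:=W$ and $B:=B_1$, density of $A_W$ in $W$ gives $\overline{B_1}=\overline{A_W}\supseteq W$, so $B_1\subseteq W$ is dense in $W$, and $B_1$ being $G_\delta$ in $X$ is $G_\delta$ in $W$. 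If instead $A_W$ is \emph{not} locally non-$G_\delta$, there is an open $U\subseteq X$ with $A_W\cap U\neq\emptyset$ and $A_W\cap U$ a $G_\delta$ in $X$; then $O:=W\cap U$ is a nonempty open subset of $S$ and $B:=A\cap O=A_W\cap U$ is dense in $O$ (as $A$ is dense in $S$) and $G_\delta$ in $O$. In either case the required pair $(O,B)$ is found, completing the local statement.

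The step I expect to be the main obstacle is precisely this dichotomy. Proposition \ref{th23} requires the countable set to be locally non-$G_\delta$, and that hypothesis genuinely cannot be dropped: when $S$ is itself countable every subset is $G_\delta$ and Proposition \ref{th23} is vacuous, yet the theorem still holds because such an $S$ is trivially meager. Handling, open set by open set, the part of $A$ on which relative $G_\delta$ neighborhoods already exist, and invoking Proposition \ref{th23} only on the complementary, locally non-$G_\delta$ behaviour, is exactly what makes the argument uniform; the verification that in the first case the set furnished by Proposition \ref{th23} is dense in all of $W$ (not merely in $\overline{A_W}$) is the other point requiring care. The passage from the local statement to meagerness of $S$ is then immediate from the countable base.
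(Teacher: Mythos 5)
Your proof is correct and rests on the same two ingredients as the paper's: Proposition \ref{th23} applied in the locally non-$G_{\delta}$ case, and the observation that an open piece without isolated points carrying a countable dense relatively $G_{\delta}$ subset must be meager (the paper states this contrapositively, as a Baire open piece acquiring two disjoint dense $G_{\delta}$ sets). The only difference is organizational: you argue directly, producing a meager nonempty open subset inside each member of a countable base and then globalizing, whereas the paper assumes the set is of second category in itself, extracts a relatively open Baire subspace, and derives a contradiction there.
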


\begin{proof} Let $Y$ be a separable subset of $X$ without isolated points. Assume that $Y$ is of second category in itself.
Then there is $Z\subseteq Y$ relatively open which is a Baire
space. In particular, $Z$ is locally uncountable (as it has no
isolated points). Let $A\subseteq Z$ be a countable dense subset.

We claim that  $A$ is locally non-$G_{\delta}$. Assume there is a nonempty relatively open $B\subseteq A$ which is
$G_{\delta}$. Let $U\subseteq X$ be open such that $U\cap A=B$.
Then $B$ is dense in $U\cap Z$. Moreover, $U\cap (Z\setminus B)$
is also dense in $U\cap Z$. Since $U\cap (Z\setminus B)$ is
$G_{\delta}$ in $U\cap Z$ (as $B$ is countable), we deduce that
the Baire space $U\cap Z$ has two disjoint dense $G_{\delta}$
sets, a contradiction. Thus, $A$ is locally non-$G_{\delta}$.

\medskip
By Proposition \ref{th23}, there is a disjoint family $\{B_i: i\in
\mathbb{N}\}$ of countable subsets of $A$ such that $A=\bigcup
B_i$, $\overline{B_i}=\overline{A}$ and $B_i$ is a
$G_{\delta}$-set in $X$ for all $i\in \mathbb{N}$. Then each $B_i$ is a
$G_{\delta}$-set in $Z$. Thus, we deduce that the Baire space $Z$ has disjoint dense $G_{\delta}$
sets $B_i$, a contradiction.

\end{proof}

\begin{corollary}{\it If $X$ is a separable metrizable space without isolated points such that
$B_1(X)$ is Baire, then $X$ is meager}.
\end{corollary}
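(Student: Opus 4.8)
The plan is to obtain this as the special case $Y=X$ of the theorem just established. That theorem asserts that, whenever $X$ is metrizable and $B_1(X)$ is Baire, every separable subset of $X$ without isolated points is meager in itself. Since here $X$ is assumed separable and metrizable and $B_1(X)$ is Baire, $X$ is a separable subset of itself, and so the conclusion ``$X$ is meager in itself'' should follow by feeding $Y=X$ into the preceding theorem, provided $X$ is dense-in-itself.

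Concretely, I would first check that the hypotheses transfer verbatim: $X$ is separable and metrizable by assumption, $B_1(X)$ is Baire by assumption, and trivially $X\subseteq X$. Applying the preceding theorem to $Y=X$ then yields at once that $X$ is meager the moment $X$ has no isolated points. No further machinery is needed for this core step; it is a direct instantiation, and Corollary \ref{cor55} is not even required since we are working with the whole space rather than a proper subset.

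The only genuine obstacle concerns isolated points. An isolated point $x$ makes $\{x\}$ a nonempty open set, which can never be contained in a countable union of nowhere dense sets, so a space possessing an isolated point is never meager; hence the assertion is to be read for the dense-in-itself case, which is precisely the situation covered by the theorem. To keep track of the general structure, I would invoke a Cantor--Bendixson splitting $X=P\cup S$, where $P$ is the (closed) perfect kernel and $S$ is the scattered remainder, countable because $X$ is second countable: the preceding theorem applied to the crowded separable set $P$ shows that $P$ is meager in itself, thereby isolating the points of $S$ as the sole source of any non-meagerness. Under the intended crowded hypothesis $S=\emptyset$, and the proof collapses to the single application of the theorem to $Y=X$.
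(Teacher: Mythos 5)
Your proposal is correct and matches the paper's (implicit) argument: the corollary is stated without proof as the instance $Y=X$ of the preceding theorem, which is exactly your core step. Your observation that the statement must be read for dense-in-itself $X$ (since a space with an isolated point can never be meager, e.g.\ a countable discrete $X$ with $B_1(X)=\mathbb{R}^X$ Baire) is a fair caveat about the corollary as printed rather than a gap in your argument, and your Cantor--Bendixson remark correctly locates any failure in the scattered part.
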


In particular, a space $B_1(X)$ is meager for any uncountable
Polish space $X$ without isolated points.

\bigskip
Note that if $X=\mathbb{N}$ is a set of natural numbers ($X$ is Polish) then $B_1(X)=\mathbb{R}^{\omega}$ and, hence, $B_1(X)$ is Baire.

\bigskip

Let us recall that a cover $\mathcal{U}$ of a set $X$ is called

$\bullet$ an $\omega$-cover if each finite set $F\subseteq X$ is
contained in some $U\in \mathcal{U}$;

$\bullet$ a $\gamma$-cover if for any $x\in X$ the set $\{U\in
\mathcal{U}: x\not\in U\}$ is finite.

\medskip

A topological space $X$ is called a {\it $\gamma$-space} if each
countable open $\omega$-cover $\mathcal{U}$ of $X$ contains a
$\gamma$-subcover of $X$. $\gamma$-Spaces were introduced by
Gerlits and Nagy in \protect\cite{GN} and are important in the
theory of function spaces as they are exactly those $X$ for which
the space $C_p(X)$ has the Fr\'{e}chet-Urysohn property
\protect\cite{GN2}.

\medskip


\begin{theorem}\label{th55} Let $X$ be a $\gamma$-space. Then $B_1(X)$ is Baire.
\end{theorem}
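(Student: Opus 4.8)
The plan is to use the characterization of Baireness established in Theorem~\ref{cor22}: $B_1(X)$ is Baire if and only if every pairwise disjoint sequence $\{\Delta_n: n\in\mathbb{N}\}$ of non-empty finite subsets of $X$ has a strongly $Coz_{\delta}$-disjoint subsequence. So the entire burden is to prove that, in a $\gamma$-space $X$, any such sequence admits a subsequence that is strongly $Coz_{\delta}$-disjoint, which by Proposition~\ref{pr34} is equivalent to $\bigcup$ of the subsequence being strongly $Coz_{\delta}$-disjoint. The natural strategy is to cook up, from the sequence $\{\Delta_n\}$, a countable $\omega$-cover of $X$ and then feed the $\gamma$-space property to extract a $\gamma$-subcover, from which the completely $Coz_{\delta}$-additive witnessing family should be read off.

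Concretely, for each $n$ the finite set $\Delta_n$ sits inside a cozero set, and more usefully, since $X$ is Tychonoff one can separate $\Delta_n$ from finitely many other points by cozero neighborhoods. First I would, for each $n\in\mathbb{N}$, build a cozero set $U_n$ that ``avoids'' $\Delta_n$ in a controlled way (e.g., $U_n = X\setminus \overline{V_n}$ for a small cozero neighborhood $V_n$ of $\Delta_n$, or a cozero set containing all but finitely many of the $\Delta_m$), arranged so that the collection $\{U_n\}$ forms a countable $\omega$-cover of $X$. The point of the $\omega$-cover condition is that every finite $F\subseteq X$ lies in some $U_n$, which one gets because each point of $X$ is eventually outside the shrinking neighborhoods of the $\Delta_n$ (the $\Delta_n$ are pairwise disjoint and finite). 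Applying the $\gamma$-space property yields a $\gamma$-subcover $\{U_{n_k}\}$, meaning every $x\in X$ belongs to all but finitely many $U_{n_k}$; dually, every $x$ lies in only finitely many complements. This ``only finitely many'' control is exactly the local finiteness needed to make the associated cozero neighborhoods of $\Delta_{n_k}$ into a completely $Coz_{\delta}$-additive system, because the union over any subfamily of the complements will again be a $Zer_{\sigma}$-set, hence its cozero complement relative to $\bigcup\Delta_{n_k}$ stays $Coz_{\delta}$.

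The hard part will be getting the bookkeeping of the cozero/zero-set arithmetic to line up so that arbitrary subunions land in $Coz_{\delta}$. The $\gamma$-property gives that each point is in cofinitely many $U_{n_k}$, so for a subfamily indexed by $\{k_j\}$ the relevant union of ``bad'' zero-sets is locally the union of finitely many near each point; I expect to argue that this union is a $Zer_{\sigma}$-set using that a countable union of zero-sets is $Zer_{\sigma}$ and that the $\gamma$-cover structure prevents any point from being covered infinitely often in a way that destroys the zero-set character. Then I would invoke Proposition~\ref{pr34} to pass from ``$\bigcup\{\Delta_{n_k}\}$ is strongly $Coz_{\delta}$-disjoint'' to ``$(\Delta_{n_k})$ is a strongly $Coz_{\delta}$-disjoint family of finite sets,'' and conclude via Theorem~\ref{cor22} that $B_1(X)$ is Baire. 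The delicate obstruction is ensuring the separating cozero sets can be chosen \emph{disjointly} across the $\Delta_{n_k}$ while simultaneously respecting the $\gamma$-cover's cofiniteness; I would handle this by first shrinking within each $\Delta_{n_k}$ using Tychonoff separation of its finitely many points, as in the proof of Proposition~\ref{pr34}, and only afterwards combining with the $\gamma$-cover to guarantee complete additivity.
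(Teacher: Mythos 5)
Your high-level skeleton is the same as the paper's: reduce via Theorem~\ref{cor22} to showing that every pairwise disjoint sequence $\{\Delta_n\}$ of non-empty finite sets has a strongly $Coz_{\delta}$-disjoint subsequence, manufacture a countable $\omega$-cover from the $\Delta_n$, extract a $\gamma$-subcover, and read off complete $Coz_{\delta}$-additivity. But the concrete $\omega$-cover you propose --- one cozero set $U_n$ per index $n$, obtained by deleting a small (co)zero neighborhood of $\Delta_n$ --- cannot do the job. Since those neighborhoods can be taken pairwise disjoint, every point of $X$ already lies in all but at most one of the sets $U_n$, so the \emph{whole} sequence $\{U_n\}$ is trivially a $\gamma$-cover and the $\gamma$-space hypothesis returns no information. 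In particular it cannot force what actually has to be proved, namely that the union $\bigcup_k Q_{n_k}$ of the selected zero-set neighborhoods $Q_{n_k}\supseteq\Delta_{n_k}$ is $Coz_{\delta}$: this union is $Zer_{\sigma}$ for free, but its complement is a countable \emph{intersection} of $Zer_{\sigma}$-sets, which is not $Zer_{\sigma}$ in general, and this is exactly the point where your ``cozero/zero-set arithmetic'' breaks down. (Once that one union is known to be $Coz_{\delta}$, arbitrary subunions follow by subtracting a $Zer_{\sigma}$-set, as in Lemma~\ref{lem101}; so this is the entire difficulty, not bookkeeping.)

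The idea you are missing is the paper's doubly indexed cover. Write each zero-set $Q_n$ as $\bigcap_i W_{n,i}=\bigcap_i S_{n,i}$ with $W_{n,i}$ cozero, $S_{n,i}$ zero-sets and $W_{n,i+1}\subseteq S_{n,i+1}\subseteq W_{n,i}$, and apply the $\gamma$-property to the $\omega$-cover $\{X\setminus S_{n,k}: n,k\in\mathbb{N}\}$. A $\gamma$-subcover then selects not only a subsequence $n_1<n_2<\dots$ but also a ``level'' $k_i$ for each $n_i$, and the statement that every $x\notin\bigcup_i Q_{n_i}$ eventually escapes $S_{n_l,k_l}$ is a genuinely nontrivial uniform escape condition. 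It is this condition that lets one verify that the $Coz_{\delta}$-set
$G=\bigcap_{j}\bigcup_{l>j}\bigcup_{i\le l}W_{n_i,k_l}$
equals $\bigcup_i Q_{n_i}$, which is the heart of the proof. Without some device of this kind your argument stalls precisely at the step you label ``the hard part,'' so as written the proposal has a genuine gap.
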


\begin{proof}
We can assume that $X$ is a Tychonoff space otherwise we apply the
Tychonoff functor to the space $X$.

 By Theorem \ref{cor22}, it suffices to prove that every pairwise disjoint
sequence of non-empty finite subsets of $X$ has a strongly
$Coz_{\delta}$-disjoint subsequence.  Let $\{\Delta_n: n\in
\mathbb{N}\}$ be a pairwise disjoint sequence of non-empty finite
subsets of $X$. Since $X$ is a Tychonoff space, there exists a
pairwise disjoint family $\{F_a: a\in \bigcup \Delta_n\}$ of
zero-sets of $X$ such that $a\in F_a$ for each $a\in \bigcup
\Delta_n$. Indeed, let $a,b\in \bigcup \Delta_n$ and $a\neq b$.
Then, there is a continuous function $f_{a,b}:X\rightarrow
\mathbb{R}$ such that $f_{a,b}(a)\neq f_{a,b}(b)$. Note that
$\{f_{a,b}: a,b\in \bigcup \Delta_n \}$ is a countable family. Let
$F_a=\bigcap \{f^{-1}_{a,b}(f_{a,b}(a)): b\in \bigcup
\Delta_n\setminus \{a\}\}$. Note that $F_a$ is a zero-set of $X$
and $F_a\cap F_b=\emptyset$ for $a\neq b$.

Let $Q_n=\bigcup \{F_a: a\in \Delta_n\}$. Clearly, $Q_n$ is a
zero-set of $X$, $\Delta_n\subseteq Q_n$ for each $n\in
\mathbb{N}$ and $Q_n\cap Q_{n'}=\emptyset$ for $n\neq n'$.

For every $n\in \mathbb{N}$ the zero-set $Q_n$ can be represented
as $Q_n=\bigcap \{W_{n,i}: i\in\mathbb{N}\}=\bigcap \{S_{n,i}:
i\in \mathbb{N}\}$ where $W_{n,i+1}\subseteq S_{n,i+1}\subseteq
W_{n,i}$, $W_{n,i}$ is a cozero-set of $X$ and $S_{n,i+1}$ is a
zero-set of $X$ for each $i\in\mathbb{N}$.

  Then $\{X\setminus S_{n,k}: k,n\in \mathbb{N} \}$ is an open
$\omega$-cover of $X$. Since $X$ is a $\gamma$-space, there is a
$\gamma$-subcover $\mathcal{V}=\{X\setminus S_{n_i,k_i}: i\in
\mathbb{N}\}$. The $\gamma$-subcover $\mathcal{V}$ leads to the subsequence $\{\Delta_{n_i}: i\in \mathbb{N}\}$ and the subsequence $\{Q_{n_i}: i\in \mathbb{N}\}$ with $\Delta_{n_i}\subseteq Q_{n_i}$ for each $i$.

Since any infinite subset of $\mathcal{V}$ is a
$\gamma$-cover of $X$, shrinking $\mathcal{V}$, if necessary, we
may assume that $n_{i+1}>n_i$ for all $i\in \mathbb{N}$. Also, we
may assume that $k_{i+1}>k_i$ for all $i$ by enlarging the
elements of $\mathcal{V}$.

Consider the $Coz_{\delta}$-set
$G:=\bigcap\limits_{j\in\mathbb{N}}\bigcup\limits_{l>j}\bigcup\limits_{i\leq
l} W_{n_i,k_l}$.

Note that $A=\bigcup \{Q_{n_i}: i\in\mathbb{N}\}\subseteq G$. We
claim that $A=G$. Fix any $x\in X\setminus A$. Since $\mathcal{V}$
is a $\gamma$-cover of $X$, there is $j_0\in\mathbb{N}$ such that
$x\not\in S_{n_l,k_l}$ for all $l\geq j_0$. Find $k\in \mathbb{N}$
such that $x\not\in W_{n_i,k}$ for all $i\leq j_0$, and let $j\geq
j_0$ be such that $k_j\geq k$. Thus for all $l\geq j$ and $i\leq
l$ we have $x\not\in W_{n_i,k_l}$, because if $i\geq j_0$ we have
$x\not\in W_{n_i,k_i}\supseteq W_{n_i,k_l}$ by the choice of
$j_0$, and if $i<j_0$ we have $x\not\in W_{n_i,k}\supseteq
W_{n_i,k_l}$ by the choice of $k$. This shows that $x\not\in G$
and thus $A=G$ is a $Coz_{\delta}$-set of $X$.

Let $\mathcal{Q}=\{Q_{n_1}, Q_{n_2}, Q_{n_3}, ...\}$. Let $W\subset \mathcal{Q}$ and $W_c=\mathcal{Q}\setminus W$. Since each $Q_n$ is a zero-set, $\bigcup W_c$ is $Zer_{\sigma}$. Since $A=\bigcup \{Q_{n_i}: i\in\mathbb{N}\}=G$ is $Coz_{\delta}$, $\bigcup W=A\setminus \bigcup W_c$ is $Coz_{\delta}$. Thus, the family $\mathcal{Q}$ has the property of completely $Coz_{\delta}$-additivity. By Proposition 6, the sequence $\{\Delta_{n_i}: i\in \mathbb{N}\}$ is strongly $Coz_{\delta}$-disjoint.

\end{proof}

Recall that $C_p(X)$ is a Fr\'{e}chet-Urysohn space if and only if $X$ is a $\gamma$-space \protect\cite{GN2}.
This and Theorem \ref{th55} imply the following corollary.

\medskip

\begin{corollary}{\it Let $C_p(X)$ be a Fr\'{e}chet-Urysohn
space. Then $B_1(X)$ is Baire.}
\end{corollary}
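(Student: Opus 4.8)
The plan is to reduce the statement to Theorem \ref{th55} by invoking the Gerlits--Nagy characterization of $\gamma$-spaces. As recorded just before Theorem \ref{th55}, $\gamma$-spaces are exactly those $X$ for which $C_p(X)$ has the Fr\'{e}chet--Urysohn property (this is the theorem of Gerlits and Nagy cited in \protect\cite{GN2}). First I would apply the direction of this equivalence that is actually needed here: if $C_p(X)$ is Fr\'{e}chet--Urysohn, then $X$ is a $\gamma$-space. Once $X$ is known to be a $\gamma$-space, the conclusion that $B_1(X)$ is Baire is precisely the assertion of Theorem \ref{th55}, so applying that theorem finishes the argument.

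I expect no genuine obstacle in this deduction: all of the substance is carried by Theorem \ref{th55}, whose proof constructs the required completely $Coz_{\delta}$-additive system out of a $\gamma$-subcover extracted from the $\omega$-cover $\{X\setminus S_{n,k}\}$. The only point requiring care is to cite the equivalence in the correct direction, namely that the Fr\'{e}chet--Urysohn property of $C_p(X)$ \emph{implies} $X$ is a $\gamma$-space (the converse implication, which we do not use, is also part of Gerlits--Nagy). Were one to avoid citing \protect\cite{GN2} and argue directly, the work would be to show that Fr\'{e}chet--Urysohnness of $C_p(X)$ produces, from any countable $\omega$-cover of $X$, a $\gamma$-subcover; but since the paper already treats this equivalence as standard, the two-step deduction above is all that is required.
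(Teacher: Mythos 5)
Your proposal is correct and is exactly the paper's (implicit) argument: the corollary follows from the Gerlits--Nagy equivalence stated just before Theorem \ref{th55} (that $C_p(X)$ is Fr\'{e}chet--Urysohn if and only if $X$ is a $\gamma$-space), applied in the direction you indicate, together with Theorem \ref{th55}. Nothing further is needed.
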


\medskip
On the other hand, when $X$ is a Lindel\"{o}f scattered space or a Lindel\"{o}f $P$-space, $C_p(X)$ has the Fr\'{e}chet-Urysohn property (Theorems II.7.15 and II.7.16 in \cite{arh}). Thus, we get the following corollaries.

\medskip

\begin{corollary}{\it Let $X$ be a scattered Lindel\"{o}f space. Then $B_1(X)$ is Baire.}
\end{corollary}

\medskip

\begin{corollary}{\it Let $X$ be a Lindel\"{o}f $P$-space. Then $B_1(X)$ is Baire.}
\end{corollary}

\medskip

 In \cite{BG}, T. Banakh and S. Gabriyelyan introduced new class
 of space ($G^N_{\delta}$-winning) and showed that it has meager
 function space $B_1(X)$.

We say that two subsets $A,B$ of a topological space $X$ can be
separated by $G_{\delta}$-sets if there exist disjoint
$G_{\delta}$-sets $G_A, G_B\subset X$ such that $A\subseteq G_A$
and $B\subseteq G_B$.

Now we consider a topological game $G_{\delta}(X)$ played by two
players $\mathrm{S}$ and $\mathrm{N}$ (abbreviated from Separating
and Nonseparating) on a topological space $X$. The player
$\mathrm{S}$ starts the game $G_{\delta}(X)$ selecting a finite
set $S_0\subseteq X$. The player $\mathrm{N}$ responds selecting
two disjoint finite sets $A_0, B_0\subseteq X\setminus S_0$. At
the $n$-th inning the player $\mathrm{S}$ chooses a finite set
$S_n\subseteq X$ containing $S_{n-1}\cup A_{n-1}\cup B_{n-1}$ and
the player $\mathrm{N}$ responds selecting two disjoint finite
sets $A_n, B_n\subseteq X\setminus S_n$. At the end of the game
the player $\mathrm{N}$ is declared the winner if the countable
sets $A:=\bigcup_{n\in\mathbb{N}} A_n$ and
$B:=\bigcup_{n\in\mathbb{N}} B_n$ cannot be separated by
$G_{\delta}$-sets in $X$. Otherwise the player $\mathrm{S}$ wins
the game.

A topological space $X$ is defined to be

$\bullet$ {\it $G_{\delta}^N$-winning} if the player $\mathrm{N}$
has a winning strategy in the game $G_{\delta}(X)$;

$\bullet$ {\it $G_{\delta}^N$-loosing} if the player $\mathrm{N}$
has no winning strategy in the game $G_{\delta}(X)$;

$\bullet$ {\it $G_{\delta}^S$-winning} if the player $\mathrm{S}$
has a winning strategy in the game $G_{\delta}(X)$.

\medskip

Thus we have the following implications:

\medskip

$\lambda$-space $\Rightarrow$ $G_{\delta}^S$-winning $\Rightarrow$
$G_{\delta}^N$-loosing $\Leftrightarrow$ not
$G_{\delta}^N$-winning.

\medskip

Since Baireness of $B_1(X)$ implies that $X$ is
$G^N_{\delta}$-loosing  (Theorem 7.2 in \cite{BG}), we get a
positive answer to Problem 7.12 in \protect\cite{BG}: {\it Is each
metrizable $\gamma$-space $G_{\delta}^N$-loosing ?}

\medskip

\begin{corollary} {\it Each $\gamma$-space is
$G^N_{\delta}$-loosing.}
\end{corollary}

\medskip

 A set of reals $X$ is {\it concentrated} on a set $D$ if and
only if for any open set $G$ if $D\subseteq G$, then $X\setminus
G$ is countable \protect\cite{Bes}.

In 1914 Luzin constructed, using the continuum hypothesis, an
uncountable set of reals having countable intersection with every
meager set. A set of reals $X$ is a Luzin set if and only if $X$
is uncountable and concentrated on every countable dense set of
reals.

\medskip

\begin{proposition} Let $X$ be a Luzin set. Then $B_1(X)$ is meager.
\end{proposition}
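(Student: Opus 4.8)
The plan is to use the characterization of meagerness via Theorem \ref{th20}. Since $X$ is a set of reals it is metrizable, hence of countable pseudocharacter, so Theorem \ref{th20} applies; moreover a non-meager $B_1(X)$ is automatically Baire. Thus it suffices to exhibit a single pairwise disjoint sequence $\{\Delta_n:n\in\mathbb{N}\}$ of finite subsets of $X$ admitting \emph{no} subsequence $\{\Delta_{n_k}:k\in\mathbb{N}\}$ for which $\bigcup_k\Delta_{n_k}$ is of type $G_\delta$ in $X$. First I would record the only consequence of the Luzin property that I need: every closed nowhere dense set $F\subseteq\mathbb{R}$ meets $X$ in a countable set. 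Indeed, $\mathbb{R}\setminus F$ is open and dense, so it contains a countable set $D$ that is dense in $\mathbb{R}$; since $X$ is concentrated on $D$ and $\mathbb{R}\setminus F$ is an open set containing $D$, the set $X\cap F=X\setminus(\mathbb{R}\setminus F)$ is countable.

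The key step is the following lemma on $G_\delta$ subsets: if $C\subseteq X$ is countable and $G_\delta$ in $X$, then $X\cap\overline{C}^{\mathbb{R}}$ is countable. To prove it I would write $C=O\cap X$ for a $G_\delta$ set $O\subseteq\mathbb{R}$, decompose $\mathbb{R}\setminus O=\bigcup_m F_m$ into closed sets, and set $V:=\bigcup_m\operatorname{int}F_m$. Each boundary $\partial F_m=F_m\setminus\operatorname{int}F_m$ is closed and nowhere dense, so by the previous paragraph $X\cap\partial F_m$ is countable; since $C\cap F_m=\emptyset$ we get $X\setminus C=\bigcup_m(X\cap F_m)$, whence $X\setminus(C\cup V)\subseteq\bigcup_m(X\cap\partial F_m)$ is countable. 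As $V\subseteq\mathbb{R}\setminus O$ while $C\subseteq O$, we have $C\cap V=\emptyset$, so $\overline{C}^{\mathbb{R}}\subseteq\mathbb{R}\setminus V$ and therefore $X\cap\overline{C}^{\mathbb{R}}\subseteq X\setminus V\subseteq(X\setminus(C\cup V))\cup C$ is countable. Reading this contrapositively, any countable $C\subseteq X$ whose real closure meets $X$ in an uncountable set fails to be $G_\delta$ in $X$. I expect this lemma, namely the passage from the $F_\sigma$-complement to the boundary sets and the use of concentration there, to be the main obstacle; everything else is bookkeeping.

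With the lemma in hand the construction is routine. I would first shrink $X$ to a convenient piece: choosing $N$ with $X\cap[-N,N]$ uncountable and passing to the set $X_0$ of its condensation points yields an uncountable, bounded, crowded (dense-in-itself) set $X_0\subseteq X$. Being separable metrizable, $X_0$ has a countable base $\{O_j:j\in\mathbb{N}\}$ of nonempty relatively open sets, and crowdedness makes each $O_j$ infinite. I would then recursively pick pairwise disjoint finite sets $\Delta_n\subseteq X_0$ so that $\Delta_n\cap O_j\neq\emptyset$ for every $j\le n$; at each stage only finitely many points have been used, so fresh points can always be found in the infinite sets $O_j$.

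Finally I would verify that this sequence works. For any infinite subsequence $\{n_k\}$ the union $C=\bigcup_k\Delta_{n_k}$ meets every $O_j$ (take any $n_k\ge j$), hence $C$ is dense in $X_0$ and so $X_0\subseteq\overline{C}^{\mathbb{R}}$. As $X_0$ is uncountable, $X\cap\overline{C}^{\mathbb{R}}$ is uncountable, and the lemma gives that $C$ is not $G_\delta$ in $X$. Thus no subsequence of $\{\Delta_n\}$ has $G_\delta$ union, so by Theorem \ref{th20} the space $B_1(X)$ is not Baire; since a non-meager $B_1(X)$ would be Baire, we conclude that $B_1(X)$ is meager.
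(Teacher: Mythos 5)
Your proof is correct, but it takes a genuinely different route from the paper's. The paper's own argument is three lines long: assuming $B_1(X)$ is Baire, it applies Proposition~\ref{th23} to a countable dense subset $A$ of $X$ to extract a countable $B\subseteq A$ that is dense in $X$ and $G_\delta$ in $X$, and then the Luzin property forces $X=B\cup(X\setminus B)$ to be countable, a contradiction. You instead go back to Theorem~\ref{th20} and build an explicit witness: a pairwise disjoint sequence of finite sets every infinite subunion of which is dense in an uncountable crowded portion $X_0$ of $X$, combined with your lemma that a countable subset $C$ of a Luzin set which is $G_\delta$ in $X$ must have $X\cap\overline{C}^{\mathbb{R}}$ countable (proved by splitting the $F_\sigma$ complement into interiors and boundaries and applying concentration to the nowhere dense boundaries --- this step is sound). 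The underlying obstruction is the same in both arguments: concentration prevents a countable set from being simultaneously dense in an uncountable piece of $X$ and $G_\delta$ in $X$. What your version buys is self-containedness (it bypasses Proposition~\ref{th23} entirely) and some extra care: by passing to the condensation points of a bounded portion you avoid the issue that a Luzin set may have isolated points or countable open portions, whereas the paper's appeal to Proposition~\ref{th23} tacitly needs the chosen countable dense set to be locally non-$G_\delta$. The price is length; given the machinery already in place, the paper's derivation is considerably shorter.
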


\begin{proof} Assume that $B_1(X)$ is Baire. Let $A$ be a countable dense subset of $X$. By
Proposition \ref{th23}, there is $B\subset A$ such that $B$ is a
dense subset of $X$ and $B$ is a $G_{\delta}$ set in $X$. Then $X$
is countable, a contradiction.
\end{proof}

\section{Pseudocompleteness for space of Baire-one functions}

The sequence $\{\mathcal{C}_n : n\in \mathbb{N}\}$ is called {\it
pseudocomplete} if, for any family $\{U_n: n\in \mathbb{N}\}$ such
that $\overline{U_{n+1}}\subseteq U_n$ and we have $U_n\in
\mathcal{C}_n$ for each $n\in\mathbb{N}$, we have $\bigcap \{U_n:
n\in\mathbb{N}\}\neq \emptyset$. A space $X$ is called {\it
pseudocomplete} if there is a pseudocomplete sequence
$\{\mathcal{B}_n : n\in\mathbb{N}\}$ of $\pi$-bases in $X$.

It is a well-known that any pseudocomplete space is Baire and any
$\check{C}$ech-complete space is pseudocomplete. Note that if $X$
has a dense pseudocomplete subspace (in particular, if $X$ has a
dense $\check{C}$ech-complete subspace) then $X$ is pseudocomplete
(p. 47 in \protect\cite{Tk}).

\begin{lemma}\label{lem10} Let $Y$ be a topological vector space and
$L\subseteq Y$ be a dense $\check{C}$ech-complete subspace. Then the
linear span of $L$ equal to $Y$.
\end{lemma}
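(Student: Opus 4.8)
The plan is to prove the stronger statement that $Y=L-L$, i.e. that every $y\in Y$ is a difference of two elements of $L$. Since $L-L$ is contained in the linear span of $L$, and the span is trivially contained in $Y$, this yields the equality. The entire argument rests on first upgrading the hypothesis ``$L$ is $\check{C}$ech-complete and dense'' to the statement ``$L$ is a dense $G_\delta$-subset of $Y$''.

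First I would show that $L$ is a $G_\delta$-set in $Y$. Recall that a Tychonoff space is $\check{C}$ech-complete if and only if it is a $G_\delta$-set in every one of its compactifications. As $Y$ is a (Hausdorff, hence Tychonoff) topological vector space and $L$ is dense in $Y$, while $Y$ is dense in $\beta Y$, the set $L$ is dense in the compact space $\beta Y$, so $\beta Y$ is a compactification of $L$. By $\check{C}$ech-completeness of $L$ we may write $L=\bigcap_n O_n$ with each $O_n$ open in $\beta Y$; intersecting with $Y$ gives $L=\bigcap_n(O_n\cap Y)$, where each $O_n\cap Y$ is open in $Y$. Hence $L$ is a dense $G_\delta$-set in $Y$.

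Now fix $y\in Y$. The translation $x\mapsto x+y$ is a homeomorphism of $Y$, so $y+L$ is again a dense $G_\delta$-set; write $y+L=\bigcap_n V_n$ with $V_n$ open in $Y$, each $V_n$ being dense since it contains the dense set $y+L$. The key move is to pass everything into $L$: for each $n$ the set $V_n\cap L$ is open in $L$, and it is dense in $L$. Indeed, if $W$ is open in $Y$ with $W\cap L\neq\emptyset$, then $W\neq\emptyset$, so $V_n\cap W$ is nonempty and open, and density of $L$ gives $\emptyset\neq V_n\cap W\cap L\subseteq (V_n\cap L)\cap(W\cap L)$. Therefore $(y+L)\cap L=\bigcap_n(V_n\cap L)$ is a countable intersection of dense open subsets of $L$. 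Because $L$ is $\check{C}$ech-complete, hence Baire, this intersection is dense in $L$, and in particular nonempty. Any point $\ell$ of it satisfies $\ell\in L$ and $\ell=y+\ell'$ for some $\ell'\in L$, whence $y=\ell-\ell'\in L-L$. As $y$ was arbitrary, $Y=L-L$, which is contained in the linear span of $L$; thus the span equals $Y$.

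The device that makes the argument go through, and the point I would be most careful about, is that we never need $Y$ itself to be a Baire space: all Baire-category reasoning is performed inside $L$, which is Baire by $\check{C}$ech-completeness, and the density computation above is exactly what transports the two dense $G_\delta$-sets of $Y$ into dense open families of $L$. Consequently the only genuinely non-formal input is the first step, namely the characterization of $\check{C}$ech-completeness as being $G_\delta$ in a compactification; everything afterward is a routine Baire-intersection argument combined with the homogeneity coming from the vector (group) structure.
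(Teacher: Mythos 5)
Your proof is correct and follows essentially the same route as the paper: both arguments observe that $L$ is a dense $G_\delta$ in $Y$, translate by $y$ to get a second dense $G_\delta$, and invoke Baire category to force $(y+L)\cap L\neq\emptyset$, i.e.\ $y\in L-L$. The only cosmetic difference is that the paper runs the category argument in $Y$ (which is Baire because it has the dense Baire subspace $L$) and phrases it as a contradiction, whereas you run it directly inside $L$; your density computation is precisely the proof that a dense Baire subspace makes the ambient space Baire, so the two arguments coincide in substance.
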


\begin{proof} Note that $L$ is $G_{\delta}$ in $Y$. Let $Z$ be the
linear span of $L$. If $y\in Y\setminus Z$, then $y+L$ is a dense
$\check{C}$ech-complete subspace of $Y$ (hence $G_{\delta}$)
disjoint with $L$. But this is a contradiction, since $L$ is Baire
and hence $Y$ is also Baire (by density of $L$).
\end{proof}

For $A\subseteq X$, denote by $\pi_A$ the projection  $\pi_A:
B_1(X)\rightarrow B_1(A)$, i.e., $\pi_A(f)=f\upharpoonright A$ for
$f\in B_1(X)$.

By Lemma \ref{lem10}, if $A\subseteq X$ and $\pi_A(B_1(X))$
contains a dense $\check{C}$ech-complete subspace, then
$\pi_A(B_1(X))=\mathbb{R}^A$.

\medskip

\begin{lemma}(\protect\cite{pyt1})\label{lem42} Let $Y\subseteq Z$ and $Y$ is a dense
pseudocomplete subspace of a regular space $Z$. If $\pi:
Z\rightarrow M$ is an open continuous mapping from $Z$ onto a
complete metric space $M$ then there is $M_1\subseteq \pi(Y)$ such
that $M_1$ is a dense $G_{\delta}$-set in $M$.
\end{lemma}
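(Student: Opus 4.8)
The plan is to build, by induction on the levels of a tree of height $\omega$ (whose branching may be uncountable, since only the \emph{height} must be countable), a scheme of open sets that lives simultaneously in $Y$, in $Z$ and in $M$. Fix a complete metric $\rho$ on $M$ and a pseudocomplete sequence $\{\mathcal B_n:n\in\mathbb N\}$ of $\pi$-bases of $Y$; note that $Y$ is regular, being a subspace of the regular space $Z$. To each node $s$ of length $n$ I attach an open set $\hat B_s\subseteq Z$, its trace $B_s:=\hat B_s\cap Y\in\mathcal B_n$, and the open set $O_s:=\pi(\hat B_s)\subseteq M$ (open because $\pi$ is open), subject to the invariants $\operatorname{diam}_\rho O_s<1/n$, $\overline{O_s}^{M}\subseteq O_{s^-}$ and $\overline{B_s}^{Y}\subseteq B_{s^-}$ for the parent $s^-$, and pairwise disjointness of $\{O_s:|s|=n\}$. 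The root carries $\hat B_\varnothing=Z$, $B_\varnothing=Y$, and $O_\varnothing=\pi(Z)=M$. The point of taking $O_s:=\pi(\hat B_s)$ is that $W\subseteq O_p$ always means $W\subseteq\pi(\hat B_p)$, which is what lets the induction feed points back into $Y$.

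For the successor step at a node $p$ of length $n$ I look for a maximal pairwise disjoint family of admissible children inside $O_p$. Given any nonempty open $W\subseteq O_p=\pi(\hat B_p)$, I first pick $y_0\in\hat B_p\cap\pi^{-1}(W)\cap Y$, which is possible because $\pi^{-1}(W)\cap\hat B_p$ is nonempty open in $Z$ and $Y$ is dense in $Z$; then I choose a $\rho$-ball $O''$ of diameter $<1/(n{+}1)$ with $\pi(y_0)\in O''$ and $\overline{O''}^{M}\subseteq W$. Using regularity of $Y$ I find an open $N'\ni y_0$ in $Y$ with $\overline{N'}^{Y}\subseteq\hat B_p\cap\pi^{-1}(O'')\cap Y\subseteq B_p$, and inside $N'$ I pick a $\pi$-base element $B'\in\mathcal B_{n+1}$; a suitable open $\hat B'\subseteq\hat B_p\cap\pi^{-1}(O'')$ with $\hat B'\cap Y=B'$ then gives an admissible child with $O':=\pi(\hat B')\subseteq O''\subseteq W$, $\overline{B'}^{Y}\subseteq B_p$ and $\overline{O'}^{M}\subseteq O_p$. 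Thus admissible children are dense in $O_p$, so a maximal pairwise disjoint family $\{O_{p^\frown i}\}$ has union dense in $O_p$; this keeps every level $V_n:=\bigcup_{|s|=n}O_s$ dense in $M$ (distinct parents already have disjoint $O$'s, so the whole level stays pairwise disjoint).

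Finally I put $M_1:=\bigcap_{n}V_n$. Each $V_n$ is open and, by the density just established together with $\overline{O_s}^{M}\subseteq O_{s^-}$, dense in $M$; since $M$ is completely metrizable it is Baire, so $M_1$ is a dense $G_\delta$. Given $m\in M_1$, pairwise disjointness of each level forces a unique branch $\alpha$ with $m\in O_{\alpha\restriction n}$ for all $n$, and the nested closures of vanishing $\rho$-diameter pin $\bigcap_n\overline{O_{\alpha\restriction n}}^{M}$ to the single point $m$. Along this branch, $B_{\alpha\restriction 1}\supseteq B_{\alpha\restriction 2}\supseteq\cdots$ satisfies $\overline{B_{\alpha\restriction(n+1)}}^{Y}\subseteq B_{\alpha\restriction n}$ with $B_{\alpha\restriction n}\in\mathcal B_n$, so pseudocompleteness of $Y$ yields $y\in\bigcap_n B_{\alpha\restriction n}\subseteq Y$; since $y\in B_{\alpha\restriction n}\subseteq\hat B_{\alpha\restriction n}$ we get $\pi(y)\in O_{\alpha\restriction n}$ for every $n$, whence $\pi(y)=m$ and $m\in\pi(Y)$, giving $M_1\subseteq\pi(Y)$. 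I expect the main obstacle to be the tension in the successor step between making the children's $\pi$-images dense in the parent (needed for density of each $V_n$) and at the same time forcing their $M$-diameters to zero and their $Y$-closures into the parent $\pi$-base element (needed for the completeness-threading and for pseudocompleteness); the device $O_s:=\pi(\hat B_s)$, together with density of $Y$ and openness of $\pi$, is precisely what reconciles these demands.
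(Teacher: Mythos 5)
Your proof is correct. Note that the paper itself gives no proof of this lemma: it is quoted with a citation to Pytkeev, so there is no in-paper argument to compare yours against. Your construction is the standard Oxtoby-style sieve: a tree of height $\omega$ of open sets threaded simultaneously through the $\pi$-bases of $Y$, through $Z$, and (via the open map $\pi$) through $M$, with shrinking diameters and nested closures on the $M$-side and nested $Y$-closures on the $Y$-side. All the delicate points check out: $Y$ is regular as a subspace of the regular $Z$; the set $\hat B' = V\cap\hat B_p\cap\pi^{-1}(O'')$ (where $V\cap Y=B'$) really does satisfy $\hat B'\cap Y=B'$ because $B'\subseteq\overline{N'}^{Y}\subseteq\hat B_p\cap\pi^{-1}(O'')$; maximality of the disjoint family gives density of each level $V_n$; pairwise disjointness of each level forces the sets $O_{s}$ containing a fixed $m\in M_1$ to form a branch; and the chain $B_{\alpha\restriction n}\in\mathcal B_n$ with $\overline{B_{\alpha\restriction(n+1)}}^{Y}\subseteq B_{\alpha\restriction n}$ is exactly what the paper's definition of pseudocompleteness requires, so the intersection meets $Y$ and its image is pinned to $m$ by the vanishing diameters. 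The only cosmetic blemishes are that the diameter condition $\operatorname{diam}_\rho O_s<1/n$ is vacuous at the root $n=0$ and that $B_\varnothing=Y$ need not lie in any $\mathcal B_n$; neither affects the argument, since the pseudocompleteness hypothesis is applied only to the chain from level $1$ onward.
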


\medskip

\begin{theorem}\label{th77} For a space $X$ the following assertions are equivalent:

1. $B_1(X)$ is pseudocomplete.

2. Every countable subset of $X$ is strongly
$Coz_{\delta}$-disjoint.
\end{theorem}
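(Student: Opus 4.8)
The plan is to prove the two implications separately, in each case reducing everything to the behaviour of the restriction maps $\pi_A\colon B_1(X)\to\mathbb{R}^A$ for countable $A\subseteq X$.

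For $(1)\Rightarrow(2)$, fix a countable set $A\subseteq X$; I want to produce the disjoint completely $Coz_{\delta}$-additive system witnessing that $A$ is strongly $Coz_{\delta}$-disjoint. The first step is to feed the data into Lemma~\ref{lem42} with $Y=B_1(X)$, $Z=\mathbb{R}^X$, $M=\mathbb{R}^A$ and $\pi=\pi_A$: here $B_1(X)$ is dense in $\mathbb{R}^X$ (since $C(X)\subseteq B_1(X)$ and $X$ is Tychonoff) and pseudocomplete by hypothesis, $\mathbb{R}^X$ is regular, and $\pi_A$ is an open continuous surjection onto the Polish (hence completely metrizable) space $\mathbb{R}^A$. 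Lemma~\ref{lem42} then yields a dense $G_{\delta}$-set $M_1\subseteq\pi_A(B_1(X))$ of $\mathbb{R}^A$; being dense $G_{\delta}$ in a Polish space, $M_1$ is $\check{C}$ech-complete and dense in $\pi_A(B_1(X))$, so the remark following Lemma~\ref{lem10} forces $\pi_A(B_1(X))=\mathbb{R}^A$. In other words every real-valued function on $A$ extends to a Baire-one function on $X$. Writing $A=\{a_i:i\in\mathbb{N}\}$, I then take $\widetilde{f}\in B_1(X)$ with $\widetilde{f}(a_i)=i$ and set $F_{a_i}:=\widetilde{f}^{-1}([i-\tfrac14,i+\tfrac14])$. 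Because $\widetilde{f}$ is Baire-one, the preimage of a closed set lies in $Coz_{\delta}(X)$, so each $F_{a_i}$ is a $Coz_{\delta}$-neighborhood of $a_i$ (recall a $Coz_{\delta}$-neighborhood need only be a $Coz_{\delta}$-set containing the point), and for any $B\subseteq\mathbb{N}$ the set $\bigcup_{i\in B}F_{a_i}=\widetilde{f}^{-1}\!\big(\bigcup_{i\in B}[i-\tfrac14,i+\tfrac14]\big)$ is again the preimage of a closed subset of $\mathbb{R}$, hence $Coz_{\delta}$. The disjointness being clear, this is the required system.

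For $(2)\Rightarrow(1)$ I build an explicit pseudocomplete sequence of $\pi$-bases. Let $\mathcal{B}_n$ consist of all basic open sets $V\subseteq B_1(X)$ with $\mathrm{diam}\,V\le\tfrac1n$; each $\mathcal{B}_n$ is a $\pi$-base since any nonempty basic open set contains one of small diameter (nonempty because $C(X)$ separates finite sets). Given $U_n\in\mathcal{B}_n$ with $\overline{U_{n+1}}\subseteq U_n$, write $U_n=\{f:f(x)\in I^n_x,\ x\in S_n\}$ with $S_n=\mathrm{supp}(U_n)$, and put $A=\bigcup_n S_n$, a countable set. The heart of the argument is to pierce, for each $x\in A$, the intervals $\{I^n_x:x\in S_n\}$ by a common point: using $\overline{U_m}\subseteq U_n$ for $n<m$ together with the box-closure formula described below, I obtain the nesting $\overline{I^m_x}\subseteq I^n_x$ whenever $x\in S_n\cap S_m$ and $n<m$; since $\mathrm{diam}\,I^m_x\to0$ this produces $v_x\in\bigcap\{I^n_x:x\in S_n\}$. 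Finally $A$ is strongly $Coz_{\delta}$-disjoint by (2), so Lemma~\ref{lem32} extends $x\mapsto v_x$ (made constant on the neighborhoods $F_{a_i}$) to some $f\in B_1(X)$; then $f(x)=v_x\in I^n_x$ for every $n$ and every $x\in S_n$, i.e. $f\in\bigcap_n U_n$, proving pseudocompleteness.

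The step I expect to be the real obstacle is the box-closure formula $\overline{U}^{B_1(X)}=\{f\in B_1(X):f(x)\in\overline{I_x},\ x\in S\}$ for a basic set $U=\{f:f(x)\in I_x,\ x\in S\}$, and more precisely its nontrivial inclusion, since this is exactly what delivers both the surjectivity needed to lift a prescribed value $w\in\overline{I^m_x}$ into $\overline{U_m}$ and the nesting $\overline{I^m_x}\subseteq I^n_x$. This is where hypothesis (2) is used decisively at the level of finite sets: a finite $S$ is strongly $Coz_{\delta}$-disjoint, so given any $f\in B_1(X)$, any target values on $S$, and any finite test set $T$, I can form a correction that is constant on a disjoint completely $Coz_{\delta}$-additive system $\{F_s:s\in S\}$ of neighborhoods chosen to miss $T\setminus S$, extend it by Lemma~\ref{lem32}, and add it to $f$; this changes $f$ to the prescribed values on $S$ while leaving it unchanged on $T\setminus S$ and keeping it Baire-one. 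This finite-modification fact simultaneously yields surjectivity of $\pi_S$ onto $\mathbb{R}^S$, openness of $\pi_S$, and the missing inclusion of the closure formula, and it is the only place where one must take care that altering a Baire-one function does not destroy the Baire-one property. With that in hand the remaining verifications (that the $\mathcal{B}_n$ are $\pi$-bases, and the purely one-dimensional nested-interval argument) are routine.
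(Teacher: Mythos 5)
Your proof is correct and follows essentially the same route as the paper's: $(1)\Rightarrow(2)$ is the identical combination of Lemma~\ref{lem42} and Lemma~\ref{lem10} followed by extending $a_i\mapsto i$ to a Baire-one function and pulling back disjoint closed intervals, and $(2)\Rightarrow(1)$ is the same scheme of $\pi$-bases of shrinking diameter in which the countable union of supports is strongly $Coz_{\delta}$-disjoint and Lemma~\ref{lem32} supplies a point of the intersection. One small correction: the box-closure formula does not actually rely on hypothesis (2), because every \emph{finite} subset of a Tychonoff space is automatically strongly $Coz_{\delta}$-disjoint (disjoint cozero neighborhoods suffice, and finite unions of $Coz_{\delta}$-sets are $Coz_{\delta}$) --- indeed adding a continuous correction already realizes any prescribed values on a finite set --- so (2) is genuinely used only for the countable set $A$, exactly as in the paper.
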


\begin{proof}
$(1)\Rightarrow(2).$ Let $Y=B_1(X)$, $Z=\mathbb{R}^X$, $\pi=\pi_A$
where $A=\{a_i: i\in \mathbb{N}\}$ is a countable subset of $X$.
Then, by Lemma \ref{lem42}, there is $M_1\subseteq \pi_A(B_1(X))$
such that $M_1$ is a dense $\check{C}$ech-complete subspace of
$\mathbb{R}^A$. By Lemma \ref{lem10},
$\pi_A(B_1(X))=\mathbb{R}^A$. Let $h: A\rightarrow \mathbb{R}$
such that $h(a_i)=i$. Then $h$ can be extended to a Baire-one
function $g$ on $X$. The family $\{g^{-1}(i) : i\in\mathbb{N}\}$
is a completely $Coz_{\delta}$-additive system and $a_i\in
g^{-1}(i)$ for each $i\in \mathbb{N}$.

$(2)\Rightarrow(1).$ For each $n$, let $\Psi_n$ be the collection
of all basic neighborhoods of the form $\langle
f,S,\varepsilon\rangle:=\{g\in B_1(X): |g(s)-f(s)|<\varepsilon,
s\in S\}$ where $f\in B_1(X)$, $S$ is finite, and
$\varepsilon<\frac{1}{2^n}$. Note that each $\Psi_n$ is a $\pi$-base in $B_1(X)$.   Suppose, for each $n\geq 1$, $\langle
f_n,S_n,\varepsilon_n\rangle\in \Psi_n$ with $\langle
f_n,S_n,\varepsilon_n\rangle\supseteq cl(\langle
f_{n+1},S_{n+1},\varepsilon_{n+1}\rangle)$. Let $T=\bigcup \{S_n:
n\in \mathbb{N}\}$. Then $T$ is countable, and hence a strongly
$Coz_{\delta}$-disjoint set. On the set $T$ the sequence $(f_n)_n$
converges pointwise to some function $g$ on $T$. Since $T$ is
strongly $Coz_{\delta}$-disjoint, there is a pairwise disjoint
collection $\{F_t: F_t$ is a $Coz_{\delta}$ neighborhood of $t$,
$t\in T\}$  such that $\{F_t: t\in T\}$ is completely
$Coz_{\delta}$-additive.

Consider $h: \bigcup\limits_{t\in T} F_t\rightarrow \mathbb{R}$
such that $h(F_t)=g(t)$ for every $t\in T$.

By Lemma \ref{lem32}, the function $h$ can be extended to a
Baire-one function $\widetilde{h}$ on $X$. But then,
$\widetilde{h}\in \bigcap\{\langle f_n,S_n,\varepsilon_n\rangle:
n\in \mathbb{N}\}$.
\end{proof}

A family $\mathcal{F}\subseteq Y^X$ of functions from a set $X$ to
a set $Y$ is called {\it $\omega$-full} in $Y^X$ if each function
$f: Z\rightarrow Y$ defined on a countable subset $Z\subseteq X$
has an extension $\widetilde{f}\in \mathcal{F}$.

The following theorem uses notions ($G_{\delta}$-dense, countably
base-compact, strong Choquet) not defined in this paper although
we do not use them in the paper, but we recommend seeing the
definitions of these notions in \cite{BG}.

\medskip

\begin{theorem}(Theorem 2.20 in \cite{BG})\label{th42}
For any cardinal $\kappa$ and each dense subgroup $X$ of
$\mathbb{R}^{\kappa}$, the following assertions are equivalent:

(i) $X$ is $\omega$-full in $\mathbb{R}^\kappa$;

(ii) $X$ is $G_{\delta}$-dense in $\mathbb{R}^\kappa$;

(iii) $X$ is countably base-compact;

(iv) $X$ is strong Choquet;

(v) $X$ is Choquet.
\end{theorem}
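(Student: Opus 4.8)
I would reduce the two ``combinatorial'' conditions to a single coordinate reformulation and then close a cycle through the three ``completeness'' conditions. The key preliminary observation is that every nonempty $G_{\delta}$-subset of $\mathbb{R}^{\kappa}$ contains a nonempty box $\{x: x(z)\in I_z,\ z\in Z\}$ supported on a countable set $Z\subseteq\kappa$ (each of countably many open sets defining the $G_{\delta}$ restricts finitely many coordinates), while a point of $\mathbb{R}^{Z}$ for countable $Z$ gives, via $\pi_Z^{-1}$, a thin $G_{\delta}$-set. Hence both (i) and (ii) are \emph{equivalent to the same statement}: $\pi_Z(X)=\mathbb{R}^{Z}$ for every countable $Z\subseteq\kappa$ (for (i), a function $f:Z\to\mathbb{R}$ extends into $X$ iff $f\in\pi_Z(X)$; for (ii), $G_{\delta}$-density amounts to meeting every such coordinate box). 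So I would prove (i)$\Leftrightarrow$(ii) directly and then establish the cycle (ii)$\Rightarrow$(iii)$\Rightarrow$(iv)$\Rightarrow$(v)$\Rightarrow$(ii).

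\textbf{The completeness directions from $G_{\delta}$-density.} For (ii)$\Rightarrow$(iii) I would use that $\mathbb{R}^{\kappa}$ is base-compact: take the base of boxes with bounded sides, so that each closure is $\pi_F^{-1}(K)$ with $F$ finite and $K$ compact. A countable centered subfamily of the trace base on $X$ has ambient closures with the finite intersection property; their intersection is $\pi_Z^{-1}(L)$ with $Z$ countable and $L\subseteq\mathbb{R}^{Z}$ a nonempty compact set (a closed subset of a countable product of compact intervals, nonempty by the finite intersection property). Since $L$ is compact in the Polish space $\mathbb{R}^{Z}$ it is $G_{\delta}$, hence $\pi_Z^{-1}(L)$ is a nonempty $G_{\delta}$ in $\mathbb{R}^{\kappa}$, which meets $X$ by $G_{\delta}$-density; intersecting with $X$ gives nonempty intersection of the $X$-closures, so $X$ is countably base-compact with the trace base. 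For (iii)$\Rightarrow$(iv), TWO wins the strong Choquet game by playing shrinking base elements $B_n$ with $\overline{B_{n+1}}\subseteq B_n$ (using regularity of the Tychonoff space $X$); the decreasing family is centered, so $\bigcap\overline{B_n}=\bigcap B_n\neq\emptyset$ by countable base-compactness. Finally (iv)$\Rightarrow$(v) is immediate, since a winning strategy for TWO in the strong Choquet game yields one in the Choquet game by letting TWO supply the auxiliary points itself.

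\textbf{The crux: (v)$\Rightarrow$(ii).} Arguing by contraposition, suppose $X$ is not $G_{\delta}$-dense; by the coordinate reformulation there is a countable $Z$ with $Y:=\pi_Z(X)$ a \emph{proper} dense subgroup of the Polish group $\mathbb{R}^{Z}$. I would then show $X$ is not Choquet. Since $\pi_Z\!\restriction\! X$ need not be open, I cannot simply transport TWO's strategy to $\mathbb{R}^{Z}$; instead I would extract information branch by branch. Assuming TWO has a winning strategy $\tau$ in the Choquet game on $X$, unfold $\tau$ into a Lusin (Choquet) scheme $\{U_s:s\in\omega^{<\omega}\}$ of nonempty open subsets of $X$ in which, along each branch, the $U_{s\restriction n}$ form a legal play followed by $\tau$, the images $\pi_Z(U_{s\restriction n})$ have diameters tending to $0$, and the sibling images occupy a prescribed dense array of small boxes. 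Each branch then yields a point $x^{*}\in\bigcap_n U_{s\restriction n}\neq\emptyset$ (the intersection is nonempty because $\tau$ is winning), and $\pi_Z(x^{*})$ is the branch limit. The set of these limits is a dense completely metrizable subspace of $Y$, so $Y$ is comeager in $\mathbb{R}^{Z}$; being comeager it has the Baire property and is nonmeager, whence by the Pettis/Banach dichotomy for subgroups of Polish groups ($Y-Y$ is a neighborhood of $0$) the subgroup $Y$ is open, hence clopen, hence, by density, all of $\mathbb{R}^{Z}$ --- contradicting properness of $Y$.

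\textbf{Where the difficulty lies.} The easy equivalence (i)$\Leftrightarrow$(ii) and the chain (ii)$\Rightarrow$(iii)$\Rightarrow$(iv)$\Rightarrow$(v) are routine once one notes that everything relevant in $\mathbb{R}^{\kappa}$ is governed by countably many coordinates. The real obstacle is (v)$\Rightarrow$(ii): plain Choquetness (unlike strong Choquet or base-compactness) is a purely strategic property, and a naive Baire-category argument only yields that $\pi_Z(X)$ is nowhere meager, which is consistent with proper non-Baire-property subgroups (e.g.\ graphs of wild additive homomorphisms), precisely the configurations that fail $G_{\delta}$-density while still being nonmeager. The point of the scheme construction is to convert the \emph{strategy} into an actual dense Polish subspace of the image, upgrading ``nonmeager'' to ``comeager'' so that Pettis's theorem applies; keeping the branch intersections nonempty while simultaneously forcing vanishing $\pi_Z$-diameters and density of the branch limits is the delicate bookkeeping I expect to absorb most of the work.
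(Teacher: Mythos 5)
First, a point of comparison: the paper does not prove this statement at all --- Theorem \ref{th42} is imported verbatim from Banakh and Gabriyelyan (Theorem 2.20 of \cite{BG}) and used as a black box, so there is no in-paper argument to measure yours against. Judged on its own terms, your architecture is the standard and correct one: the reduction of both (i) and (ii) to the single condition ``$\pi_Z(X)=\mathbb{R}^Z$ for every countable $Z\subseteq\kappa$'' (every nonempty $G_\delta$ in $\mathbb{R}^\kappa$ contains a fiber $\pi_Z^{-1}(p)$ with $Z$ countable, and conversely each such fiber is a nonempty $G_\delta$), the chain (ii)$\Rightarrow$(iii)$\Rightarrow$(iv)$\Rightarrow$(v) via traces of bounded boxes and compactness in $\mathbb{R}^Z$, and the identification of (v)$\Rightarrow$(ii) as the only hard step, to be settled by converting TWO's strategy into a large subset of $Y=\pi_Z(X)$ and finishing with Pettis's theorem, are all sound.

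The one genuine soft spot sits exactly where you place the difficulty, namely in the density of the branch-limit set. As written, you ``prescribe a dense array of small boxes'' for the sibling images; but ONE can only force $\pi_Z(U_{s^\frown i})\subseteq B_i$ by playing $U_s\cap\pi_Z^{-1}(B_i)\cap X$, and TWO's strategy may answer with a set whose $\pi_Z$-image is a tiny subset of $B_i$. Every branch through that node then has its limit inside $\overline{\pi_Z(U_{s^\frown i})}$, so the limit set need not be dense in $\bigcup_i B_i$ no matter how cleverly the boxes are prescribed in advance: prescription cannot work, because density of the next level must be measured against TWO's \emph{responses}, not against ONE's moves. The standard repair (as in Oxtoby's argument, or Kechris, Theorem 8.17) is to take at each node a \emph{maximal} family of TWO-responses $\tau(p^\frown U)$, where $U$ ranges over all sufficiently small legal moves of ONE inside the current set, chosen so that the $\pi_Z$-images have pairwise disjoint closures contained in the parent image: maximality forces the union of these images to be dense in the parent image (otherwise ONE could play inside the missed open set and TWO's answer would enlarge the family), countability is automatic since $\mathbb{R}^Z$ is second countable, and the set $\bigcap_n\bigcup_{|s|=n}\pi_Z(\hat V_s)$ is then a $G_\delta$ contained in $Y$ and comeager in a nonempty open subset of $\mathbb{R}^Z$. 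That already suffices for the Pettis step ($D-D$ is a neighbourhood of $0$ for any nonmeager set $D$ with the Baire property), so you do not even need global density of the limit set. With this correction the argument closes; everything else in your outline is fine.
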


Combining  Theorem \ref{th77} with Theorem \ref{th42}, we get the
following characterization of a topological space $X$ when
$B_1(X)$ is a Choquet space.

\medskip
\begin{theorem} For a space $X$ the following assertions are equivalent:

1. $B_1(X)$ is pseudocomplete;

2. $B_1(X)$ is $\omega$-full in $\mathbb{R}^X$;

3. $B_1(X)$ is $G_{\delta}$-dense in $\mathbb{R}^X$;

4. $B_1(X)$ is countably base-compact;

5. $B_1(X)$ is strong Choquet;

6. $B_1(X)$ is Choquet;

7. Every countable subset of $X$ is strongly
$Coz_{\delta}$-disjoint.

\end{theorem}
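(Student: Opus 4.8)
The plan is to prove the chain of equivalences by combining the two already-established theorems with a cycle argument. First, observe that Theorem \ref{th77} already supplies the equivalence $(1)\Leftrightarrow(7)$, so the entire burden reduces to linking the ``pseudocomplete'' condition $(1)$ (equivalently $(7)$) to the five properties $(2)$--$(6)$ imported from Theorem \ref{th42}. The key structural fact I would exploit is that $B_1(X)$ is a dense subgroup (indeed a dense linear subspace) of $\mathbb{R}^X$: it contains $C(X)$, which is dense in $\mathbb{R}^X$ in the topology of pointwise convergence, and $B_1(X)$ is closed under addition and negation. Thus Theorem \ref{th42} applies verbatim with $\kappa=|X|$ and $X$ there replaced by the group $B_1(X)$, giving at once the mutual equivalence of $(2)$ through $(6)$.

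It therefore remains to close the loop between these five conditions and condition $(7)$ (or equivalently $(1)$). The cleanest route is to prove $(7)\Rightarrow(2)$ and $(2)\Rightarrow(7)$ directly, since $(2)$ is the $\omega$-fullness condition and $(7)$ is the combinatorial condition on countable subsets of $X$. For $(7)\Rightarrow(2)$, I would take any function $f$ defined on a countable set $Z=\{z_i:i\in\mathbb{N}\}\subseteq X$; by $(7)$ the set $Z$ is strongly $Coz_{\delta}$-disjoint, so there is a disjoint completely $Coz_{\delta}$-additive system $\{F_{z_i}:i\in\mathbb{N}\}$ of $Coz_{\delta}$ neighborhoods, and then Lemma \ref{lem32} extends the function sending each $F_{z_i}$ to the value $f(z_i)$ to a Baire-one function on $X$; this extension witnesses $\omega$-fullness. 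For the converse $(2)\Rightarrow(7)$, given a countable $Z=\{z_i\}$ I would apply $\omega$-fullness to the specific injection $h(z_i)=i$, obtaining $g\in B_1(X)$ with $g(z_i)=i$, exactly as in the $(1)\Rightarrow(2)$ argument of Theorem \ref{th77}: the level sets $g^{-1}(i)$, being $Coz_{\delta}$ because $g$ is Baire-one (so $g^{-1}$ of an open interval lies in $Zer_\sigma$ with complement in $Coz_\delta$), form a completely $Coz_{\delta}$-additive system separating the points $z_i$, so $Z$ is strongly $Coz_{\delta}$-disjoint.

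With these two implications and Theorem \ref{th42} in hand, all seven conditions are mutually equivalent: $(1)\Leftrightarrow(7)$ by Theorem \ref{th77}, the block $(2)\Leftrightarrow(3)\Leftrightarrow(4)\Leftrightarrow(5)\Leftrightarrow(6)$ by Theorem \ref{th42}, and $(2)\Leftrightarrow(7)$ by the direct extension arguments above, so every statement is tied to $(7)$.

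The main obstacle I anticipate is justifying that Theorem \ref{th42} is legitimately applicable to $B_1(X)$, which is an imported black-box result phrased for ``dense subgroups of $\mathbb{R}^\kappa$.'' I would need to confirm that $B_1(X)$ is genuinely $G_{\delta}$-dense-capable as a subgroup and that the notions left undefined in this paper (countably base-compact, strong Choquet) are those to which Theorem \ref{th42} refers; since the paper explicitly defers these definitions to \cite{BG}, the proof can only invoke Theorem \ref{th42} as a citation rather than re-derive its internal equivalences. The genuinely mathematical content, and the part requiring care, is the pair of implications bridging $(7)$ and $(2)$ via Lemma \ref{lem32} and the Baire-one level-set computation; everything else is bookkeeping once density of $C(X)$ in $\mathbb{R}^X$ is noted.
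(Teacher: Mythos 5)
Your proposal is correct and follows essentially the same route as the paper: the paper likewise reduces everything to Theorem \ref{th77} for $(1)\Leftrightarrow(7)$, Theorem \ref{th42} applied to the dense subgroup $B_1(X)\subseteq\mathbb{R}^X$ for the block $(2)$--$(6)$, and then proves exactly the two bridging implications $(7)\Rightarrow(2)$ (via Lemma \ref{lem32}) and $(2)\Rightarrow(7)$ (via preimages of closed sets under the extension of $z_i\mapsto i$, the paper using $\widetilde{h}^{-1}([i-\frac{1}{3},i+\frac{1}{3}])$ where you use the level sets $g^{-1}(i)$, an immaterial difference).
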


\begin{proof}
$(7)\Rightarrow(2)$. Let $Z$ be a countable subset of $X$ and $f:
Z\rightarrow \mathbb{R}$ be a function on $Z$. Since $Z$ is
strongly $Coz_{\delta}$-disjoint, there exists a pairwise disjoint
family $\alpha=\{F_z: z\in Z\}$ of $Coz_{\delta}$ subsets of $X$
such $\alpha$ forms a completely $Coz_{\delta}$-additive system.
Consider the function $h: \bigcup F_z\rightarrow \mathbb{R}$ such
that $h(x)=f(z)$ for $x\in F_z$. Since $\alpha$ forms a completely
$Coz_{\delta}$-additive system (by Lemma \ref{lem32}), there
exists $\widetilde{f}\in B_1(X)$ such that
$\widetilde{f}\upharpoonright \bigcup F_z=h$. Hence,
$\widetilde{f}\upharpoonright Z=f$.

$(2)\Rightarrow(7)$. Let $Z=\{z_i: i\in\mathbb{N}\}$ be a
countable subset of $X$. Consider the function $h:Z\rightarrow
\mathbb{R}$ such that $h(z_i)=i$ for each $i\in \mathbb{N}$. Since
$B_1(X)$ is $\omega$-full in $\mathbb{R}^X$ then there is
$\widetilde{h}\in B_1(X)$ such that $\widetilde{h}\upharpoonright
Z=h$.  Then $\{h^{-1}([i-\frac{1}{3}, i+\frac{1}{3}]): i\in
\mathbb{N}\}$ forms a disjoint completely $Coz_{\delta}$-additive
system and $z_i\in h^{-1}([i-\frac{1}{3}, i+\frac{1}{3}])$ for
each $i\in\mathbb{N}$. Thus, $Z$ is strongly
$Coz_{\delta}$-disjoint.

\end{proof}

Recall that a topological space $X$ is called a {\it
$\lambda$-space} if every countable subset is of type $G_{\delta}$
in $X$. A subset $X$ of the real line $\mathbb{R}$ is called a {\it $\lambda$-set} if each countable subset $A\subset X$ is $G_{\delta}$ in $\mathbb{R}$.

\medskip
\begin{corollary}\label{cor45} {\it Let $X$ be a space of countable pseudocharacter. A space $B_1(X)$ is pseudocomplete if and only if $X$ is a
 $\lambda$-space.}
\end{corollary}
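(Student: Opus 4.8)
The plan is to reduce everything to Theorem \ref{th77}, which identifies pseudocompleteness of $B_1(X)$ with the statement that every countable subset of $X$ is strongly $Coz_{\delta}$-disjoint. Thus, under the standing hypothesis that $X$ has countable pseudocharacter, it suffices to prove that every countable subset of $X$ is strongly $Coz_{\delta}$-disjoint if and only if $X$ is a $\lambda$-space. The two ingredients I expect to use are Lemma \ref{lem1} (for a space of countable pseudocharacter, a strongly $Coz_{\delta}$-disjoint countable set forms a completely $Coz_{\delta}$-additive system) and the observation recorded before Theorem \ref{th20} that any Lindel\"of $G_{\delta}$-set is $Coz_{\delta}$ (Proposition 4(b) in \cite{KS}); note that every countable subspace is trivially Lindel\"of.

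For the forward implication I would assume $B_1(X)$ is pseudocomplete and take an arbitrary countable subset $A=\{a_i:i\in\mathbb{N}\}$. By Theorem \ref{th77} the set $A$ is strongly $Coz_{\delta}$-disjoint, so Lemma \ref{lem1} applies and yields that the singletons $\{a_i\}$ form a completely $Coz_{\delta}$-additive system. Taking the whole index set, $A=\bigcup_i\{a_i\}\in Coz_{\delta}(X)$; since every cozero-set is open, a $Coz_{\delta}$-set is $G_{\delta}$, and hence $A$ is $G_{\delta}$ in $X$. As $A$ was arbitrary, $X$ is a $\lambda$-space.

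For the converse I would assume $X$ is a $\lambda$-space (this already forces countable pseudocharacter, since singletons are countable) and fix a countable $A=\{a_i:i\in\mathbb{N}\}$. Every countable subset $C\subseteq A$ is $G_{\delta}$ in $X$ because $X$ is a $\lambda$-space, and it is Lindel\"of, hence $C\in Coz_{\delta}(X)$ by Proposition 4(b) in \cite{KS}. In particular each singleton $\{a_i\}$ is a $Coz_{\delta}$-neighborhood of $a_i$, these are pairwise disjoint, and $\bigcup_{i\in B}\{a_i\}\in Coz_{\delta}(X)$ for every $B\subseteq\mathbb{N}$, so the family $\{\{a_i\}:i\in\mathbb{N}\}$ is completely $Coz_{\delta}$-additive. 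Thus $A$ is strongly $Coz_{\delta}$-disjoint, and Theorem \ref{th77} gives that $B_1(X)$ is pseudocomplete.

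The argument is essentially bookkeeping on top of Theorem \ref{th77} and Lemma \ref{lem1}, so I do not anticipate a serious obstacle. The only point requiring care is the translation between the three notions of smallness for a countable set, namely $G_{\delta}$, Lindel\"of $G_{\delta}$, and $Coz_{\delta}$, and in particular remembering that a $Coz_{\delta}$-neighborhood of a point need not contain the point in its interior, so that a singleton may itself serve as the witnessing neighborhood in the converse direction.
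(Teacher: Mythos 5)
Your argument is correct and is essentially the paper's intended one: the paper states this as an immediate corollary of Theorem \ref{th77}, with the translation between ``every countable set is strongly $Coz_{\delta}$-disjoint'' and ``$\lambda$-space'' supplied exactly by Lemma \ref{lem1} together with the remark that Lindel\"of $G_{\delta}$-sets (in particular countable $G_{\delta}$-sets and singletons in a space of countable pseudocharacter) are $Coz_{\delta}$. Your care about the fact that a $Coz_{\delta}$ neighborhood need only contain the point, so that singletons may serve as witnesses, matches the paper's definition.
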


\medskip
Let us recall the definition of some small uncountable cardinal
(see \protect\cite{handbook}, p.149):

 $\mathfrak{b}=\min\{|X|:$ $X$ has a countable
pseudocharacter but $X$ is not a $\lambda$-space$\}$.

Note that the small cardinal $\mathfrak{b}$ has a different
standard definition (see \protect\cite{handbook}, p.115), the
given formula is one of the equivalent descriptions.

\medskip

\begin{proposition} Let $X$ be a space $X$ of countable pseudocharacter. If $|X|<\mathfrak{b}$ then $B_1(X)$ is Choquet (pseudocomplete).
\end{proposition}

\begin{proof} By definition of $\mathfrak{b}$, if $|X|<\mathfrak{b}$ then $X$ is a
$\lambda$-space.

By Theorem 1.3 in \protect\cite{BG}, for any space $X$ of countable pseudocharacter, $B_1(X)$ is Choquet if and only if $X$ is a $\lambda$-space.
\end{proof}

Recall that a space $X$ is called a {\it $Q$-space} if every
subset is an $F_{\sigma}$-set in $X$.

By Theorem 2.1 in \protect\cite{Os4}, Lemma \ref{lem32} and
Theorem \ref{th77}, we have the following result.

\medskip
\begin{corollary}\label{cor46} {\it Let $X$ be a space of countable pseudocharacter. A space $B_1(X)$ is
pseudocomplete and realcompact if and only if $X$ is a
$Q$-space.}
\end{corollary}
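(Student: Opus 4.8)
The plan is to reduce both implications to the single statement that $B_1(X)=\mathbb{R}^X$, using Theorem 2.1 in \cite{Os4}, which tells us that $B_1(X)=\mathbb{R}^X$ (equivalently, that every real-valued function on $X$ is of the first Baire class) if and only if $X$ is a $Q$-space. Granting this, it suffices to show that, under the standing assumption of countable pseudocharacter, $B_1(X)$ is simultaneously pseudocomplete and realcompact exactly when $B_1(X)=\mathbb{R}^X$.

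For sufficiency I would assume $X$ is a $Q$-space, so that Theorem 2.1 in \cite{Os4} gives $B_1(X)=\mathbb{R}^X$. Then $B_1(X)$ is realcompact because it is a product of copies of the realcompact line $\mathbb{R}$ and arbitrary products of realcompact spaces are realcompact \cite{Eng}. It is also pseudocomplete: $\mathbb{R}^X$ is trivially $\omega$-full in itself (any function on a countable $Z\subseteq X$ extends to a member of $\mathbb{R}^X=B_1(X)$), so the equivalence $(2)\Leftrightarrow(1)$ in the theorem preceding Corollary \ref{cor45} applies; alternatively one checks directly, via Lemma \ref{lem32} and Theorem \ref{th77}, that every countable subset is strongly $Coz_{\delta}$-disjoint.

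For necessity I would assume $B_1(X)$ is pseudocomplete and realcompact. Pseudocompleteness, through Theorem \ref{th77} and Theorem \ref{th42}, makes $B_1(X)$ $\omega$-full and hence $G_{\delta}$-dense in $\mathbb{R}^X$; concretely, every countable $A\subseteq X$ is strongly $Coz_{\delta}$-disjoint, so by Lemma \ref{lem1} and Lemma \ref{lem32} the projection $\pi_A\colon B_1(X)\to\mathbb{R}^A$ is onto, which forces $B_1(X)$ to meet every nonempty $G_{\delta}$-subset of $\mathbb{R}^X$. The decisive step is then a standard property of the Hewitt realcompactification: a realcompact space $Y$ that is $G_{\delta}$-dense in a Tychonoff space $Z\supseteq Y$ must equal $Z$. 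Indeed, density gives $\beta Z=\beta Y$, and for each $p\in Z$ every zero-set of $\beta Y$ containing $p$ meets $Z$ in a nonempty $G_{\delta}$-set and therefore meets $Y$, so $Z\subseteq\upsilon Y=Y$. Applying this with $Y=B_1(X)$ and $Z=\mathbb{R}^X$ yields $B_1(X)=\mathbb{R}^X$, and Theorem 2.1 in \cite{Os4} then shows that $X$ is a $Q$-space.

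The step I expect to be the main obstacle is the passage from realcompactness to $B_1(X)=\mathbb{R}^X$: one must verify carefully that pseudocompleteness really supplies $G_{\delta}$-density (not merely topological density) of $B_1(X)$ in $\mathbb{R}^X$, and then correctly invoke the characterization of $\upsilon Y$ as the largest subspace of $\beta Y$ in which $Y$ is $G_{\delta}$-dense. The countable-pseudocharacter hypothesis is exactly what makes Lemma \ref{lem1} available, guaranteeing that strong $Coz_{\delta}$-disjointness of countable sets upgrades to genuine completely $Coz_{\delta}$-additive systems, and hence to the surjectivity of the projections $\pi_A$ on which the $G_{\delta}$-density rests.
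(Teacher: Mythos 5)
Your overall route is the one the paper intends (its proof is only a citation of Theorem 2.1 of \cite{Os4}, Lemma \ref{lem32} and Theorem \ref{th77}): reduce both implications to the identity $B_1(X)=\mathbb{R}^X$, get realcompactness and pseudocompleteness of $\mathbb{R}^X$ for free in one direction, and in the other use pseudocompleteness to make $B_1(X)$ $\omega$-full, hence $G_\delta$-dense in $\mathbb{R}^X$, and then let realcompactness force equality. That skeleton is sound, and you correctly identified where the real work is.

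One assertion in your justification of that key step is false as written: ``density gives $\beta Z=\beta Y$''. A merely dense subspace need not be $C^*$-embedded (take $Y=(0,1)$ dense in $Z=[0,1]$, where $\beta Y\neq [0,1]=\beta Z$), so you are not yet entitled to regard the points of $\mathbb{R}^X$ as points of $\beta(B_1(X))$ and run the zero-set computation $Z\subseteq\upsilon Y=Y$. What rescues the argument is exactly the $G_\delta$-density you have already established: a $G_\delta$-dense subspace of a product of separable metrizable spaces is $C$-embedded (a theorem of Noble, see also Arhangel'skii's results on Moscow spaces), so $\mathbb{R}^X$ really does sit inside $\beta(B_1(X))$ over $B_1(X)$ and your characterization of $\upsilon(B_1(X))$ then applies; alternatively, this is precisely what Theorem 2.1 of \cite{Os4} packages, so citing it here closes the gap. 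A second, harmless, inaccuracy: surjectivity of the projections $\pi_A$ does not need Lemma \ref{lem1} or countable pseudocharacter --- Lemma \ref{lem32} applied to the witnessing disjoint completely $Coz_{\delta}$-additive family already yields it (this is how Theorem \ref{th77} is proved without any pseudocharacter assumption); the pseudocharacter hypothesis enters only through the exact form of the cited external theorem.
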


\medskip

The following example we consider under the model of set theory in
which every $\mathfrak{b}$-scale set is a $\gamma$-space
(Corollary 1.5 in \cite{CRZ}).

\bigskip

\bigskip

\begin{example} It is consistent with $ZFC$ there is a zero-dimensional metrizable
separable space $X$ such that

1. $B_1(X)$ is not pseudocomplete;

2. $B_1(X)$ is Baire;

3. $|X|=\mathfrak{b}$;

4. $X$ is a $\gamma$-space;

5. $X$ is not a $\lambda$-space.

\end{example}

\begin{proof} Let $X$ be the space $H$ from Theorem 10 of
\protect\cite{BT} or $X$ from Example 8.4 of \protect\cite{BG}.
Let us recall its construction.

Given two functions $f,g: \omega\rightarrow \omega$, we write
$f\leq^*g$ ($f\leq g$) if the set $\{n\in \mathbb{N}: f(n)\not\leq
g(n)\}$ is finite (empty). A subset $B\subseteq \omega^{\omega}$
is {\it unbounded} if for any $y\in \omega^{\omega}$ there is
$x\in B$ such that $x\not\leq^* y$. The cardinal $\mathfrak{b}$
can be equivalently defined as the smallest cardinality of an
unbounded subset of $\omega^{\omega}$.

By $\omega^{\uparrow \omega}$ we denote the family of increasing
functions from $\omega$ to $\omega$. There exists a transfinite
sequence $\{f_{\alpha}\}_{\alpha<\mathfrak{b}}$ of increasing
functions $f_{\alpha}: \omega\rightarrow \omega$ such that the set
$S=\{f_{\alpha}\}_{\alpha<\mathfrak{b}}$ is unbounded in
$\omega^{\omega}$ and $f_{\alpha}\leq^*f_{\beta}$ for every
$\alpha<\beta$ in $\mathfrak{b}$ (see Example 8.4 of
\protect\cite{BG}).

Consider the closure $\overline{\omega}=[0,\omega]$ of $\omega$ in
the ordinal $\omega+1$ endowed with the order topology. Let
$C\subset \overline{\omega}^{\omega}$ be the countable set of
functions $f:\omega\rightarrow\overline{\omega}$ such that there
exists $n\in \mathbb{N}$ so that $f(i)<f(j)$ for $i<j<n$ and
$f(k)=\omega$ for all $k\geq n$. Note that the subspace
$\overline{\omega}^{\uparrow \omega}:=\omega^{\uparrow \omega}\cup
C$ in $\overline{\omega}^{\omega}$ is closed and hence compact.
The subspace $X:=S\cup C$ of $\overline{\omega}^{\omega}$ has the
property (1)-(5).

$\bullet$ $|X|=\mathfrak{b}$.

$\bullet$ By the proof (ii) in Example 8.4 of \protect\cite{BG},
the set $C$ is not a $G_{\delta}$-set in $X$, hence, $X$ is not a
$\lambda$-space. By Corollary \ref{cor45}, $B_1(X)$ is not
pseudocomplete.

$\bullet$ Galvin and Miller \protect\cite{GM} constructed under
$\mathfrak{p}=\mathfrak{c}$ a $\mathfrak{b}$-scale set which is a
$\gamma$-space. According to \protect\cite{CRZ}, it is consistent
that for any $\mathfrak{b}$-scale
$S=\{f_{\alpha}\}_{\alpha<\mathfrak{b}}$ the space $X=S\cup C$ is
a $\gamma$-space and, hence, by Theorem \ref{th55}, $B_1(X)$ is
Baire.

\end{proof}

Let us give several examples of subsets $X$ of the real line $\mathbb{R}$ for which $B_1(X)$ is Baire.

\medskip
{\bf ZFC Examples}

\medskip
(1) $B_1(\mathbb{Q})$, where $\mathbb{Q}$ is the space of all rational numbers (or any countable subset of $\mathbb{R}$), is Baire. This is because $\mathfrak{b}$ is uncountable.

\medskip
(2) Let $X$ be an uncountable $\lambda$-set that is a subset of the real line. Then $B_1(X)$ is Baire.

\medskip
{\bf Consistent Examples}

\medskip

(3) For any uncountable subset $X$ of the real line of cardinality $<\mathfrak{b}$, $B_1(X)$ is Baire.

\medskip

(4) Let $X$ be an uncountable $\gamma$-set that is a subset of the real line.  Then $B_1(X)$ is Baire.

\medskip

{\bf Remark.} There is a $ZFC$ example of an uncountable subset of the real line that is a $\lambda$-space (see \cite{BG} page 3). It is consistent with $ZFC$ that $\omega_1<\mathfrak{b}$. When this is the case, any subset of the real line of cardinality $\omega_1$ would be a $\lambda$-set.

According to Gerlits and Nagy \protect\cite{GN}, under $MA$+ not $CH$, uncountable $\gamma$-sets exist since any subset of the real line of cardinality less than continuum is a $\gamma$-set. On the other hand, in Laver's model for the Borel conjecture, all $\gamma$-sets are countable \cite{Lav}.

\medskip
{\bf Comment.} All of these examples (1)-(4) are examples of Baire $B_1(X)$ such that $C_p(X)$ is not Baire. Note that there exists a non-trivial convergent sequence in each of the $X$. The example of $B_1(\mathbb{Q})$ is a clear illustration of the difference between strong discrete sequence and strongly $Coz_{\delta}$-disjoint sequence.

\medskip
There is a $ZFC$ example for Baire $C_p(X)$ that is not pseudocomplete (Ex. 469 in \cite{Tk}). Example 1 is an example of a Baire $B_1(X)$ that is not pseudocomplete. This a consistent example since it involves an uncountable $\gamma$-set.
\medskip

{\bf Question 1.} Is there a $ZFC$ example for a Baire $B_1(X)$ that is not pseudocomplete?

\section{Countable dense homogeneity of space of Baire-one functions}

A space $X$ is {\it countable dense homogeneous} (CDH) if $X$ is
separable and given countable dense subsets $D,D'\subseteq X$,
there is a homeomorphism $h: X\rightarrow X$ such that $h[D]=D'$.
Canonical examples of CDH spaces include the Cantor set, Hilbert
cube, space of irrationals, and all separable complete metric
linear spaces and manifolds modeled on them. An easy example of a
non-metrizable CDH space is the Sorgenfrey line.

In \cite{dkm}, it is proved that every CDH topological vector
space is a Baire space.

If $X$ is a space and $A\subseteq X$, then the sequential closure
of $A$, denote by $[A]_{seq}$, is the set of all limits of
sequences from $A$. A set $D\subseteq X$ is said to be
sequentially dense if $X=[D]_{seq}$.

Note that if $B_1(X)$ is CDH, then it is strongly sequentially
separable, i.e. $B_1(X)$ is separable and every countable dense
subset of $B_1(X)$ is sequentially dense.

Many topological properties are defined or characterized in terms
of the following selection principle.  Let $\mathcal{A}$ and
$\mathcal{B}$ be sets consisting of families of subsets of an
infinite set $X$. Then:

$S_1(\mathcal{A},\mathcal{B})$ is the selection hypothesis: for
each sequence $(A_n: n\in\mathbb{N})$ of elements of $\mathcal{A}$
there is a sequence $(b_n: n\in\mathbb{N})$ such that for each
$n$, $b_n\in A_n$, and $\{b_n: n\in\mathbb{N}\}$ is an element of
$\mathcal{B}$.

For a topological space $X$ we denote:

$\bullet$ $\mathcal{B}_{\Omega}$ -- the family of countable Baire
$\omega$-covers of $X$;

$\bullet$ $\mathcal{B}_{\Gamma}$ -- the family of countable Baire
$\gamma$-covers of $X$.

\medskip

Then we have the following results.

$\bullet$ If $B_1(X)$ is strongly sequentially separable, then $X$
has the property $S_1(\mathcal{B}_{\Omega}, \mathcal{B}_{\Gamma})$
\protect\cite{os3}.

$\bullet$ If $X$ has the property $S_1(\mathcal{B}_{\Omega},
\mathcal{B}_{\Gamma})$ then it is a $\sigma$-space, i.e. each
$G_{\delta}$-subset of $X$ is an $F_{\sigma}$-subset
\protect\cite{scbo}.

$\bullet$ If $B_1(X)$ is separable, then $X$ is submetrizable
\protect\cite{os2}.

\medskip

Thus, we get that if $B_1(X)$ is CDH then  $B_1(X)$ is Baire and a
submetrizable space $X$ has the property
$S_1(\mathcal{B}_{\Omega}, \mathcal{B}_{\Gamma})$ (hence, is a
$\sigma$-space).

\medskip

Let us recall the definitions of some small uncountable cardinals:

$\bullet$ $\mathfrak{q}_0:=\min \{|X|: X\subseteq \mathbb{R}, X$
is not a $Q$-space $\};$

$\bullet$ $\mathfrak{q}:=\min\{\kappa:$ if $X\subseteq \mathbb{R}$
and $|X|\geq \kappa$, then $X$ is not a $Q$-space $\};$

$\bullet$ $\mathfrak{b}:=\min\{|X|: X$ is perfect but is not a
$\sigma$-set $\};$

$\bullet$ $\mathfrak{p}$ is the smallest cardinality of a family
$\mathcal{F}$ of infinite subsets of $\omega$ such that every
finite subfamily $\mathcal{E}\subseteq \mathcal{F}$ has infinite
intersection $\bigcap \mathcal{E}$ and for any infinite set
$I\subseteq \omega$ there exists a set $F\in \mathcal{F}$ such
that $I\setminus F$ is infinite.
\medskip

It is known (see \protect\cite{bmz}) that $\mathfrak{p}\leq
\mathfrak{q}_0 \leq \min \{\mathfrak{b},\mathfrak{q}\}\leq
\mathfrak{q}\leq \mathfrak{c}$.

\medskip
It is consistent that there are no uncountable $\sigma$-space
(Theorem 22 in \protect\cite{mil}).

\medskip

\begin{proposition} It is consistent that there are no uncountable
separable metrizable space $X$ such that $B_1(X)$ is CDH.
\end{proposition}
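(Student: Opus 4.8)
The plan is to combine the chain of implications assembled immediately above the statement with the consistency result of Miller, so that the proof becomes a short argument by contradiction requiring no new construction, only careful bookkeeping of the cited facts. First I would invoke Theorem 22 of \cite{mil} to fix a model of ZFC in which there are no uncountable $\sigma$-spaces; all subsequent reasoning takes place inside this model.

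So suppose, toward a contradiction, that $X$ is an uncountable separable metrizable space for which $B_1(X)$ is CDH. I would first use the observation already recorded in the excerpt that a CDH space $B_1(X)$ is separable and that every countable dense subset of it is sequentially dense; that is, $B_1(X)$ is strongly sequentially separable. By the result of \cite{os3}, strong sequential separability of $B_1(X)$ yields that $X$ has the selection property $S_1(\mathcal{B}_{\Omega}, \mathcal{B}_{\Gamma})$. By \cite{msbt}, this in turn forces $X$ to be a $\sigma$-space, i.e. every $G_{\delta}$-subset of $X$ is an $F_{\sigma}$-subset. The submetrizability of $X$ obtained from separability of $B_1(X)$ via \cite{os2} is automatic here, since $X$ is already assumed metrizable, but it is harmless to record it.

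Now I would simply note the contradiction: $X$ is an uncountable $\sigma$-space, whereas in the chosen model no uncountable $\sigma$-space exists. Hence, in that model, no uncountable separable metrizable $X$ can have $B_1(X)$ CDH, which is exactly the asserted consistency statement.

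The step I expect to need the most care is reconciling the terminology: the consistency statement of \cite{mil} is typically phrased for $\sigma$-sets of reals, whereas the conclusion of the implication chain delivers a separable metrizable $\sigma$-space. I would therefore make sure that an uncountable separable metrizable $\sigma$-space indeed yields (or is) an uncountable $\sigma$-set in the sense required by \cite{mil} --- using that such an $X$ embeds in a Polish space and that the $\sigma$-space property is precisely the defining property of a $\sigma$-set --- so that Miller's result applies without gap. Beyond this identification, the argument is a direct concatenation of the previously established implications, and I would not expect any further obstacle.
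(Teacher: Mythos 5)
Your argument is exactly the one the paper intends: the Proposition is stated as an immediate consequence of the chain CDH $\Rightarrow$ strongly sequentially separable $\Rightarrow$ $S_1(\mathcal{B}_{\Omega},\mathcal{B}_{\Gamma})$ $\Rightarrow$ $\sigma$-space, combined with Miller's consistency result that no uncountable $\sigma$-space exists. Your extra care in identifying a separable metrizable $\sigma$-space with a $\sigma$-set in Miller's sense is a reasonable refinement of a point the paper glosses over, but the route is the same.
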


\medskip

On the other hand, if $|X|<\mathfrak{q}_0$ then $X$ is a
$Q$-space. For a perfect normal space $X$, if $|X|<\mathfrak{q}_0$
then $B_1(X)=\mathbb{R}^X$. But, $\mathbb{R}^X$ is CDH if, and
only if, $|X|<\mathfrak{p}$ \protect\cite{stzh}. Since
$\mathfrak{p}\leq \mathfrak{q}_0$, we have the following result.

\medskip

\begin{proposition} Let $X$ be a perfect normal space and
$|X|<\mathfrak{p}$. Then $B_1(X)$ is CDH.
\end{proposition}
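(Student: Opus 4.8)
The plan is to prove the final statement, namely that for a perfect normal space $X$ with $|X|<\mathfrak{p}$, the function space $B_1(X)$ is countable dense homogeneous. The strategy is to reduce $B_1(X)$ to the full product $\mathbb{R}^X$ and then invoke the known characterization of when $\mathbb{R}^X$ is CDH. First I would establish the identity $B_1(X)=\mathbb{R}^X$ under the hypotheses. Since $\mathfrak{p}\le \mathfrak{q}_0$, the assumption $|X|<\mathfrak{p}$ forces $|X|<\mathfrak{q}_0$, so by definition of $\mathfrak{q}_0$ the space $X$ is a $Q$-space, i.e.\ every subset of $X$ is an $F_{\sigma}$-set. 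This is exactly the remark already recorded in the excerpt just before the statement.

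The key step is then to verify that a $Q$-space $X$ that is perfectly normal satisfies $B_1(X)=\mathbb{R}^X$. The idea is that for an \emph{arbitrary} real-valued function $f:X\to\mathbb{R}$ and any open $U\subseteq \mathbb{R}$, the preimage $f^{-1}(U)$ is some subset of $X$; because $X$ is a $Q$-space, $f^{-1}(U)$ is automatically an $F_{\sigma}$-set, and in a perfectly normal space every closed set is a zero-set, so every $F_{\sigma}$-set lies in $Zer_{\sigma}(X)$. By the criterion recalled in the paper (Exercise 3.A.1 in \cite{lmz}, quoted in Section 2), a function $f$ is of the first Baire class precisely when $f^{-1}(U)\in Zer_{\sigma}$ for every open $U\subseteq \mathbb{R}$. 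Hence every function $f\in \mathbb{R}^X$ is Baire-one, giving $B_1(X)=\mathbb{R}^X$ as sets, and since both carry the pointwise-convergence (product) topology, the identification is a homeomorphism.

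Finally I would apply the cited dichotomy for products: by the result of \cite{stzh} recalled immediately above the statement, $\mathbb{R}^X$ is CDH if and only if $|X|<\mathfrak{p}$. Combining this with the identification $B_1(X)=\mathbb{R}^X$ and the hypothesis $|X|<\mathfrak{p}$ yields directly that $B_1(X)$ is CDH, completing the proof. The main obstacle to watch for is the perfect-normality step that converts $F_{\sigma}$-sets into $Zer_{\sigma}$-sets: one must genuinely use perfect normality (so that closed sets are zero-sets and hence $F_{\sigma}$ sets are countable unions of zero-sets) rather than mere normality, and one must be careful that the two cardinal inequalities $\mathfrak{p}\le\mathfrak{q}_0$ and the definition of $\mathfrak{q}_0$ are lined up correctly so that $|X|<\mathfrak{p}$ really does certify that $X$ is a $Q$-space. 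Once $B_1(X)=\mathbb{R}^X$ is secured, the remainder is a straightforward appeal to the quoted characterization.
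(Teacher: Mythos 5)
Your proposal is correct and follows essentially the same route as the paper: the paper likewise deduces from $|X|<\mathfrak{p}\le\mathfrak{q}_0$ that $X$ is a $Q$-space, notes that for a perfect normal $Q$-space $B_1(X)=\mathbb{R}^X$, and then applies the Stepr\={a}ns--Zhou characterization that $\mathbb{R}^X$ is CDH if and only if $|X|<\mathfrak{p}$. Your explicit verification of $B_1(X)=\mathbb{R}^X$ via the $Zer_{\sigma}$-preimage criterion is exactly the justification the paper leaves implicit.
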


\medskip

In particular, if $X$ is a countable metrizable space, then
$B_1(X)$ is CDH.

\medskip
\begin{proposition}$(\mathfrak{p}<\mathfrak{q}_0)$. It is consistent that there are a set of reals $X$ such that $B_1(X)$ is Baire, but is not
CDH.
\end{proposition}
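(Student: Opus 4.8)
The plan is to work in a model of ZFC witnessing $\mathfrak{p}<\mathfrak{q}_0$ (such models exist, which is exactly what supplies the ``it is consistent'' clause), and to take $X\subseteq\mathbb{R}$ to be any set of reals with $|X|=\mathfrak{p}$; this is possible since $\mathfrak{p}\le\mathfrak{c}$. The whole argument then amounts to threading the two cardinal bounds $\mathfrak{p}\le|X|<\mathfrak{q}_0$ through the facts already assembled just before the statement.

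First I would check that $B_1(X)$ is Baire. Because $|X|=\mathfrak{p}<\mathfrak{q}_0$ and $X$ is a set of reals, the definition of $\mathfrak{q}_0$ forces $X$ to be a $Q$-space; hence every subset of $X$ is an $F_{\sigma}$, so in particular every countable subset is $G_{\delta}$ and $X$ is a $\lambda$-space. Since $X$ is metrizable it has countable pseudocharacter, so Theorem \ref{th20} applies and gives that $B_1(X)$ is Baire.

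Next I would show $B_1(X)$ is not CDH. As a set of reals, $X$ is perfectly normal and metrizable, and the inequality $|X|<\mathfrak{q}_0$ yields $B_1(X)=\mathbb{R}^X$: in a perfectly normal $Q$-space every subset is $\mathrm{Zer}_{\sigma}$, so for any $f\colon X\to\mathbb{R}$ and open $U\subseteq\mathbb{R}$ the preimage $f^{-1}(U)$ is $\mathrm{Zer}_{\sigma}$, i.e.\ every function is Baire-one. Then I would invoke the characterization of \cite{stzh}, namely that $\mathbb{R}^X$ is CDH if and only if $|X|<\mathfrak{p}$. Since $|X|=\mathfrak{p}\not<\mathfrak{p}$, the space $B_1(X)=\mathbb{R}^X$ fails to be CDH, completing the argument.

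The only delicate point is matching the inequalities correctly: non-CDH-ness of $\mathbb{R}^X$ requires $|X|\ge\mathfrak{p}$, while both the $Q$-space (hence Baire) property and the identity $B_1(X)=\mathbb{R}^X$ require $|X|<\mathfrak{q}_0$. Under the standing assumption $\mathfrak{p}<\mathfrak{q}_0$ the choice $|X|=\mathfrak{p}$ satisfies both constraints simultaneously, which is precisely why the strict inequality (rather than plain ZFC) is what the statement invokes. There is no serious technical obstacle beyond verifying that these cited ingredients apply verbatim to a set of reals of cardinality $\mathfrak{p}$.
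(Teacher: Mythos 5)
Your proposal is correct and follows essentially the same route as the paper: take $X\subseteq\mathbb{R}$ with $|X|=\mathfrak{p}<\mathfrak{q}_0$, use the $Q$-space property to get $B_1(X)=\mathbb{R}^X$ and the Stepr\"{a}ns--Zhou characterization to rule out CDH, and get Baireness from the $\lambda$-space property. The only cosmetic difference is that you derive the $\lambda$-space property directly from $X$ being a $Q$-space, while the paper routes it through $\mathfrak{q}_0\le\mathfrak{b}$ and $\sigma$-sets; both are fine.
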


\begin{proof} Let $X$ be a set of real of cardinality
$\mathfrak{p}$. Since $\mathfrak{p}<\mathfrak{q}_0$, $X$ is a
$Q$-set and, hence, $B_1(X)=\mathbb{R}^X$. But, $\mathbb{R}^X$ is
CDH if and only if $|X|<\mathfrak{p}$ \protect\cite{stzh}. Hence,
$B_1(X)$ is not CDH. Since
$\mathfrak{p}<\mathfrak{q}_0\leq\mathfrak{b}$, $X$ is a
$\sigma$-set (hence, $\lambda$-space, i.e. every countable subset
is $G_{\delta}$ in $X$) and $B_1(X)$ is Baire (see
\protect\cite{BG}).
\end{proof}

\begin{proposition}
If an uncountable set of real numbers is concentrated on a
countable subset of itself, then $B_1(X)$ is not CDH.
\end{proposition}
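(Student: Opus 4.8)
The plan is to argue by contradiction: assume $B_1(X)$ is CDH and deduce that $X$ is countable. The strategy is to lean entirely on the chain of implications assembled in the discussion just preceding this proposition, together with the defining property of a concentrated set. First I would record the reduction those results provide: since $B_1(X)$ is CDH it is strongly sequentially separable, so $X$ has the property $S_1(\mathcal{B}_{\Omega},\mathcal{B}_{\Gamma})$, and because $X$ is a set of reals (hence submetrizable) it follows that $X$ is a $\sigma$-space, i.e. every $G_{\delta}$-subset of $X$ is an $F_{\sigma}$-subset of $X$. This $\sigma$-space conclusion is the only consequence of CDH I intend to use.

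Next I would exploit the countability of $D$. As $X$ is metrizable, every singleton is closed, so the countable set $D$ is an $F_{\sigma}$-subset of $X$; hence $X\setminus D$ is a $G_{\delta}$-subset of $X$. Applying the $\sigma$-space property to $X\setminus D$ makes it $F_{\sigma}$, and therefore $D$ itself is a $G_{\delta}$-subset of $X$. I would then write $D=X\cap\bigcap_{n\in\mathbb{N}}U_n$ with each $U_n$ open in $\mathbb{R}$; since $D\subseteq\bigcap_n U_n$ we may take $D\subseteq U_n$ for every $n$.

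The crux is to upgrade ``$D$ is $G_{\delta}$'' to ``$X\setminus D$ is countable'' via concentration. Each $U_n$ is an open set of reals containing $D$, so the hypothesis that $X$ is concentrated on $D$ gives that $X\setminus U_n$ is countable. Consequently $X\setminus D=X\setminus\bigcap_n U_n=\bigcup_n(X\setminus U_n)$ is countable, whence $X=D\cup(X\setminus D)$ is countable, contradicting the assumption that $X$ is uncountable. This contradiction shows $B_1(X)$ is not CDH.

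I expect the only genuine step to be that last observation, namely that concentration converts a $G_{\delta}$ representation of $D$ into a countable exhaustion of $X\setminus D$; everything before it is bookkeeping on top of already-established implications, and no difficulty arises from the fact that $D$ need not be dense. One could instead try to route the argument through Baireness of $B_1(X)$ and Proposition \ref{th23}, but that is more awkward here, since concentration is guaranteed only on the specific set $D$ rather than on an arbitrary countable dense subset of $X$; the $\sigma$-space route sidesteps this obstacle entirely.
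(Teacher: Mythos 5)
Your proof is correct, but it closes the argument differently from the paper. Both you and the paper run the same opening chain: $B_1(X)$ CDH implies it is strongly sequentially separable, hence $X$ has $S_1(\mathcal{B}_{\Omega},\mathcal{B}_{\Gamma})$. The paper then finishes in one stroke by citing Corollary 5 of Scheepers--Tsaban, which says directly that an uncountable set of reals concentrated on a countable subset of itself fails $S_1(\mathcal{B}_{\Omega},\mathcal{B}_{\Gamma})$. You instead push one step further along the chain (using the cited implication that $S_1(\mathcal{B}_{\Omega},\mathcal{B}_{\Gamma})$ forces $X$ to be a $\sigma$-space) and then supply an elementary, self-contained contradiction: in a $\sigma$-space the countable set $D$ is $G_{\delta}$ in $X$, so $D=X\cap\bigcap_n U_n$ with each $U_n$ open in $\mathbb{R}$ containing $D$; concentration makes each $X\setminus U_n$ countable, hence $X\setminus D=\bigcup_n(X\setminus U_n)$ and therefore $X$ is countable. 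The trade-off is clear: the paper's proof is shorter but leans on an external result tailored to concentrated sets, while yours substitutes a two-line first-principles argument (effectively reproving that an uncountable concentrated set is not a $\sigma$-space, indeed that its witnessing countable set cannot even be $G_{\delta}$) at the price of invoking the $S_1(\mathcal{B}_{\Omega},\mathcal{B}_{\Gamma})\Rightarrow\sigma$-space implication. Your handling of the fact that $D$ need not be dense, and your use of the paper's definition of concentration, are both accurate; there is no gap.
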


\begin{proof}
If an uncountable set of real numbers is concentrated on a
countable subset of itself, then it does not have property
$S_1(\mathcal{B}_{\Omega}, \mathcal{B}_{\Gamma})$ (see Corollary 5
in \protect\cite{scbo}) and hence $B_1(X)$ is not CDH.
\end{proof}

\medskip

{\bf Acknowledgements.} The author would like to thank O.F.K.~Kalenda and
S.~Gabriyelyan for several valuable comments.

The author would like to thank the
referee for careful reading and valuable comments.

\bibliographystyle{model1a-num-names}

\end{document}